%% file: main.tex
\documentclass[a4paper,UKenglish,cleveref,thm-restate]{lipics-v2021}
\usepackage{float}
\usepackage{refcount}
\usepackage{bm}
\usepackage{mathtools}

\nolinenumbers

\newcommand{\rowmajorizedby}{\prec^{\mathrm{r}}}

\newcommand{\R}{\mathbb R}
\newcommand{\Z}{\mathbb Z}
\newcommand{\veczero}{\mathbf0}
\newcommand{\vecone}{\mathbf1}
\newcommand{\matzero}{\bm O}

\renewcommand{\O}{\mathcal O}

\DeclareMathOperator{\lcm}{lcm}
\newcommand{\stacked}{\normalfont{\textsc{stacked}}}
\newcommand{\nfold}{\normalfont{\textsc{n-fold}}}

\newcommand{\graverbasis}{\mathcal G}

\makeatletter
\AddToHook{cmd/appendix/before}{\def\cref@section@alias{appendix}\def\cref@subsection@alias{appendix}}
\makeatother

\newcounter{theorem1}
\theoremstyle{claimstyle}
\newtheorem{claimin}{Claim}[theorem]

\newenvironment{proofof}[1]{\setcounter{theorem1}{\value{theorem}}\setcounter{theorem}{\getrefnumber{#1}}\begin{proof}[Proof of \cref{#1}]}{\end{proof}\setcounter{theorem}{\value{theorem1}}}

\newcommand{\claimqedqedhere}{\renewcommand\qedsymbol{\textcolor{lipicsGray}{\ensuremath{\vartriangleleft}}\textcolor{lipicsGray}{\ensuremath{\blacktriangleleft}}}%
\qedhere%
\renewcommand\qedsymbol{\textcolor{lipicsGray}{\ensuremath{\blacktriangleleft}}}}

\makeatletter
\newenvironment{thmwithsinglecitation}[2]{%
  \begingroup
  \def\thmwithsinglecitation@env{#1}%
  \renewcommand{\thmhead@plain}[3]{%
    \thmname{##1}\thmnumber{\@ifnotempty{##1}{ }\@upn{##2}}%
    \thmnote{ {\the\thm@notefont##3}}}%
  \let\thmhead\thmhead@plain
  \csname #1\endcsname[#2]%
}{%
  \csname end\thmwithsinglecitation@env\endcsname
  \endgroup
}
\makeatother

\crefname{lemma}{Lemma}{Lemmata}
\Crefname{lemma}{Lemma}{Lemmata}

\makeatletter
\let\ltxlabel\ltx@label 
\makeatother

\newcommand{\manualeqlabel}[1]{\refstepcounter{equation}\textup{(\theequation)}\ltxlabel{#1}}

\title{Value Functions of Separable Convex Integer Programs are Periodically Convex}

\titlerunning{Value Functions of Separable Convex Integer Programs are Periodically Convex}

\author{Koen Ligthart}{Eindhoven University of Technology, Netherlands}{k.m.ligthart@tue.nl}{https://orcid.org/0009-0004-6823-5225}{}

\authorrunning{K. Ligthart}

\Copyright{Koen Ligthart} 

\ccsdesc[500]{Mathematics of computing~Combinatorial optimization}
\ccsdesc[500]{Theory of computation~Complexity classes}

\keywords{integer programming, value function, fixed-parameter tractability, block-structure, polyhedral optimization}

\acknowledgements{The author thanks Alexandra Lassota for valuable discussions and feedback.}

\hideLIPIcs

\begin{document}

\maketitle

\begin{abstract}
\input{sections/abstract}
\end{abstract}

\input{sections/introduction}

\input{sections/introduction-periodic-convexity}

\input{sections/scaled-idp}

\input{sections/periodic-convexity}

\input{sections/algorithms}

\input{sections/future-directions}

\bibliography{bib}

\appendix

\input{sections/appendix-n-fold-idp}

\end{document}

%% file: sections/abstract.tex
We consider the periodic behavior of the value functions $b\mapsto\min\{f(x)\ \vert\ Ax=b,\,x\in\Z_{\ge0}^n\}$ of integer programs. We show that there exists a positive integer $M$ depending only on the constraint matrix $A\in\Z^{m\times n}$ so that the value function is convex extensible on any subdomain of the form $r+M\Z^m$ for any $r\in\Z^m$ and any separable convex objective function $f$. With this, we extend the known periodic convexity of such functions for linear objective functions $f$, as established by Eisenbrand and Rothvoss (SODA 25), to the broader class of separable convex objective functions. We derive our main periodic convexity result by first showing that periodic convexity along lines is equivalent to the integer decomposition property of dilated polyhedra. Subsequently, we use Graver basis techniques to extend the $1$-dimensional periodic convexity to domains of arbitrary fixed dimension. We apply this periodic convexity to show that value function reformulations of block-structured integer programs become periodically convex, which yields fixed-parameter tractable (FPT) algorithms. More specifically, we optimize two-stage stochastic integer programs and $n$-fold integer programs in FPT time when parameterized by the block dimensions and coefficient size of the local blocks of the constraint matrix, allowing the coefficients of the global blocks to be large. In the setting of this parameterization, which was recently introduced by Cslovjecsek, Koutecký, Lassota, Pilipczuk, and Polak (TheoretiCS 2025), our algorithms exponentially improve on the running times of the previous optimization algorithms and extend the class of objective functions that can be optimized from linear to separable convex.

%% file: sections/introduction.tex
\section{Introduction}
\label{sec:introduction}

It is well-known that the optimal value of a convex optimization problem is a convex function of the ``right-hand sides''. That is, if $f_0,f_1,\dots,f_m\colon\R^n\to\R$ are convex functions and $S\subseteq\R^n$ is a convex set, then the \emph{value function} $h\colon\R^m\to\R\cup\{-\infty,\infty\}$ given by
\begin{equation}
    \label{eq:general-convex-value-function}
    h(b)=\inf\bigl\{f_0(x)\bigm\vert f_1(x)\le b_1,\dots,f_m(x)\le b_m,\,x\in S\bigr\}
\end{equation}
is convex. See Exercise 5.32 in~\cite{Boyd_Vandenberghe_2004}. The convexity of (\ref{eq:general-convex-value-function}) enables reformulating a \emph{two-stage stochastic} linear programming problem of the form $\min\{c_1^\top x_1+c_2^\top x_2\ \vert\ A_1x_1+A_2x_2\le d,\,x_1\in\R^{n_1},x_2\in\R^{n_2}\}$
as the convex lower-dimensional problem
\begin{equation}
    \label{eq:classical-benders}
    \min\bigl\{c_1^\top x_1+h(d-A_1x_1)\ \vert\ x_1\in\R^{n_1}\bigr\}
\end{equation}
on the \emph{first-stage variables} $x_1$ using the value function $h(b)=\min\{c_2^\top x_2\bigm\vert A_2x_2\le b,\,x_2\in\R^{n_2}\}$. Here, all vectors are column vectors. In this case, the value function $h$ is piecewise affine by LP duality and (\ref{eq:classical-benders}) can be solved by adding classical Benders cuts that represent pieces of the value function $h$~\cite{conforticornuejolszambelli}. The reformulation (\ref{eq:classical-benders}) is particularly effective when the second-stage problem $\min\{c_2^\top x_2\ \vert\ A_2x_2\le d-A_1x_1,\,x_2\in\R^{n_2}\}$ decouples into multiple independent optimization problems that can be solved in parallel, i.e., when $A_2$ is a block-diagonal matrix with diagonal blocks $D_1,\dots,D_n$ and the constraint matrix $[A_1,A_2]$ is of the form
\begin{equation}
    \label{eq:2-stage-matrix}
    \begin{bmatrix}
        C_1&D_1&&\\
        \vdots&&\ddots&\\
        C_n&&&D_n
    \end{bmatrix}.
\end{equation}
Here, the omitted parts of the matrix are zeroes.

Unfortunately, when the variable domain $S$ is discrete, the value function (\ref{eq:general-convex-value-function}) is typically not convex, which results in a value function reformulation that is difficult to solve. In this paper, we focus on the well-studied setting of integer programs, where the variables are integral and constraints are linear and have integral coefficients. Integer programs (IPs) can express a wide range of problems from combinatorial optimization and are known to be NP-hard to solve in general~\cite{conforticornuejolszambelli}. Given the relevance of IPs, it is natural ask whether the corresponding value function still possesses enough structure to derive an integer analogue to the value function reformulation of (\ref{eq:classical-benders}), which would lead to efficient specialized algorithms to solve \emph{two-stage stochastic integer programs} of the form $\min\{f(x)\ \vert\ Ax=b,\,x\in\Z_{\ge0}^n\}$ where $A$ has the form shown in (\ref{eq:2-stage-matrix}). The theoretical behavior of value functions of integer programs been well studied~\cite{DBLP:journals/mp/BlairJ82,DBLP:journals/orl/BrownZAS21,DBLP:journals/siamjo/RomeijndersSVH16,DBLP:journals/ior/Wolsey71}, going as far back as to Gomory~\cite{gomory1969some} who considered periodic structure that arises asymptotically. In addition, the role of value functions in practically solving stochastic integer programs has received considerable attention as well~\cite{DBLP:journals/mp/AhmedTS04,DBLP:journals/mp/SchultzSV98,DBLP:journals/ior/TavasliogluPS19,DBLP:journals/ior/TrappPS13}.

Only recently have Eisenbrand and Rothvoss~\cite{DBLP:conf/soda/EisenbrandR26} answered the question affirmatively in the context of exact fixed-parameter tractable (FPT) algorithms when the objective $f$ is linear. The main result of~\cite{DBLP:conf/soda/EisenbrandR26}, which is a periodically affine description of the general integer hull, implies that the value function $b\mapsto\min\{c^\top x\ \vert\ Ax\le b,\,x\in\Z^n\}$ is convex extensible on the lattice translate $r+M\Z^m=\{r+Mz\ \vert\ z\in\Z^m\}$ for any $r\in\Z^m$. Here, $M$ is an integer that only depends on the constraint matrix $A$. We refer to convex extensibility of this form as \emph{periodic convexity} with \emph{period} $M$ and refer to $r$ as the \emph{phase} vector. The periodic convexity ensures that a value function reformulation on the first-stage variables becomes a convex integer program when these variables have a fixed remainder modulo $M$. In this way, Eisenbrand and Rothvoss~\cite{DBLP:conf/soda/EisenbrandR26} derive an FPT algorithm that solves two-stage stochastic integer linear programs by guessing the remainders, i.e., the phase, of the first-stage variables and solving the corresponding value function reformulation restricted to this phase.\footnote{We note that Eisenbrand and Rothvoss~\cite{DBLP:conf/soda/EisenbrandR26} formulate their algorithm in terms of solving a mixed-integer linear program by strengthening the integer linear programming formulation and relaxing the second-stage variables. In this case, projecting out the continuous second-stage variables yields a value function reformulation.} Their algorithm runs in time $f(k,\Delta)\cdot|I|^{\O(1)}$ when the width of the blocks in (\ref{eq:2-stage-matrix}) is bounded by $k$, the absolute value of the coefficients of $D_1,\dots,D_n$ is bounded by $\Delta$, and the encoding length of the instance is given by $|I|$.

Their algorithm generalizes the FPT algorithm for finding feasible solutions to two-stage stochastic integer programs by Cslovjecsek et al.~\cite{DBLP:journals/theoretics/CslovjecsekKLPP25}, who use a similar algorithmic approach, but only consider the periodically convex structure of set of right-hand sides $b$ for which $Ax=b$, $x\in\Z_{\ge0}^n$ is feasible, i.e., the integer cone. The main advantage of the phase guessing algorithmic approach, which is also used in~\cite{DBLP:conf/icalp/LassotaL25}, is that it is insensitive to the sizes of coefficients in the ``global part'', $C_1,\dots,C_n$, of the constraint matrix of (\ref{eq:2-stage-matrix}). This results in less restrictive parameterizations than that of previous FPT algorithms for two-stage stochastic programs~\cite{DBLP:conf/esa/CslovjecsekEPVW21,DBLP:journals/mor/EisenbrandHKKLO25,eisenbrand2022algorithmictheoryintegerprogramming,DBLP:conf/ipco/HunkenschroderKLV25,DBLP:journals/mp/Klein22}, which rely on proximity or Graver complexity results and demand that all coefficients of the constraint matrix are bounded by the parameter $\Delta$. Cslovjecsek et al.~\cite{DBLP:journals/theoretics/CslovjecsekKLPP25} motivate the study of the less restrictive parameterization by relating it to the open parameterized complexity of the \emph{4-block integer linear programming problem}. This concerns integer linear programs with constraint matrices of the form (\ref{eq:4-block-matrix}), which simultaneously generalize two-stage stochastic constraint matrices and their transpose \emph{$n$-fold} constraint matrices as shown in~(\ref{eq:n-fold-matrix}). When parameterizing by the block dimension $k$ and maximum coefficient size $\smash{\overline\Delta}$ across the entire constraint matrix, 4-block integer programs admit a slice-wise polynomial time algorithm with running time of the form $f(\smash{\overline\Delta})\cdot|I|^{g(k)}$~\cite{DBLP:journals/mp/HemmeckeKW14,DBLP:conf/ipco/LassotaL26,DBLP:journals/mp/OertelPW24}, but no complementing W[1]-hardness result is known and a FPT algorithm is conjectured to exist~\cite{DBLP:conf/soda/EisenbrandR26}.
\[
    \begin{array}{cc}
        \begin{bmatrix}
            B_1&\cdots&B_n\\
            D_1&&\\
            &\ddots&\\
            &&D_n
        \end{bmatrix}\quad & \quad\begin{bmatrix}
            A_0&B_1&\cdots&B_n\\
            C_1&D_1&&\\
            \vdots&&\ddots&\\
            C_n&&&D_n
        \end{bmatrix}\\
        \manualeqlabel{eq:n-fold-matrix}\quad & \quad\manualeqlabel{eq:4-block-matrix}
    \end{array}
\]
On the other hand, both the two-stage stochastic and $n$-fold integer programming problems admit FPT algorithms. In fact, existing algorithms parameterized by $k$ and $\overline\Delta$ can minimize \emph{separable convex} objective functions that are accessed through a comparison oracle~\cite{DBLP:journals/mor/EisenbrandHKKLO25,eisenbrand2022algorithmictheoryintegerprogramming,DBLP:conf/ipco/HunkenschroderKLV25}, whereas the algorithms of~\cite{DBLP:journals/theoretics/CslovjecsekKLPP25,DBLP:conf/soda/EisenbrandR26}, which can handle large coefficients in the global parts of the constraint matrix, appear to be limited to linear objectives. Here, $f\colon\R^n\to\R\cup\{\infty\}$ is separable convex if it can be written as $f(x)=\sum_{i\in[n]}f_i(x_i)$ for convex functions $f_i\colon\R\to\R\cup\{\infty\}$, $i\in[n]:=\{1,2,\dots,n\}$. This type of objective function has received considerable attention in the integer programming literature~\cite{DBLP:conf/esa/BrandKLO24,DBLP:journals/jacm/HochbaumS90,DBLP:conf/ipco/HunkenschroderKLV25,DBLP:journals/scheduling/KnopK18}. In this paper, we study value functions of integer programs over such objectives. Using our theoretical results, we derive the first FPT algorithms for two-stage and $n$-fold IPs that can simultaneously handle large coefficients in the global parts of the constraint matrix and separable convex objective functions.

\subsection{Contributions}

We show that value functions of integer programs with separable convex objectives possess periodic convexity. Our main result, \cref{theorem:general-convex-extensibility}, extends the consequence of the result by Eisenbrand and Rothvoss~\cite{DBLP:conf/soda/EisenbrandR26} to a more general class of objective functions.

\begin{restatable}{theorem}{theoremgeneralconvexextensibility}
    \label{theorem:general-convex-extensibility}
    There exists a positive integer $M=2^{\O((\sqrt m\Delta)^m)}$ so that the value function $b\mapsto\min\{f(x)\ \vert\ Ax=b,\,x\in\Z^n\}$ is convex extensible on $r+M\Z^m$ for any constraint matrix $A\in\{-\Delta,-\Delta+1,\dots,\Delta\}^{m\times n}$, separable convex function $f\colon\R^n\to\R\cup\{\infty\}$ and phase vector $r\in\Z^m$.
\end{restatable}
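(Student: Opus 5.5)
Convex extensibility of $h(b)=\min\{f(x)\mid Ax=b,\,x\in\Z^n\}$ on $r+M\Z^m$ means: for every $b\in r+M\Z^m$ and every way of writing $b=\sum_j\lambda_j b^j$ as a convex combination of points $b^j\in r+M\Z^m$, we have $h(b)\le\sum_j\lambda_jh(b^j)$. I would prove this in two stages, following the route the abstract describes: first the one-dimensional case along lines, then arbitrary fixed dimension through Graver bases.

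\textbf{One-dimensional case.} Fix $b^0,b^1\in r+M\Z^m$ and take optimal solutions $x^0,x^1$. I want to show that $t\mapsto h(b^0+t(b^1-b^0))$ is convex on $t\in\{0,1,\dots,k\}$, after rescaling so that $b^1-b^0=Md$. Let $g=x^1-x^0$, so $Ag=Md$. Decompose $g$ conformally into Graver basis elements $g=\sum_i g_i$ with $Ag_i$ in the lattice $A\Z^n$. Each $\|g_i\|_\infty$ is bounded by the Graver bound $(2m\Delta+1)^m$, so each $Ag_i$ lies in a bounded box.

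The key claim is an integer decomposition property. Fix $M$ divisible by all denominators arising from the finitely many vectors in that box; then $M=2^{\O((\sqrt m\Delta)^m)}$, for instance via an lcm of integers up to $(\sqrt m\Delta)^m$, using Hadamard. With this choice, the multiset of Graver pieces can be partitioned into $M$-scaled groups whose $A$-images are multiples of $Md/k$ in the lattice sense. Equivalently, the polytope $\{y\in[0,1]^{\#}:\sum_i y_iAg_i=tMd\}$ for integral $t$ has the integer decomposition property after dilation.

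Separable convexity combined with conformality gives superadditivity of $f$-increments along conformal sums. This is the standard Graver/Murota argument: $f(x+\sum_{i\in S}g_i)-f(x)$ behaves like an M-convex function of the subset $S$. Using it, I would show $h(b^0+tMd')$ is convex in $t$: the midpoint inequality $h(b^{t-1})+h(b^{t+1})\ge 2h(b^t)$ follows by splitting $x^{t+1}-x^{t-1}$ into two conformal halves, each of which hits right-hand side $\pm Md'$.

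\textbf{Arbitrary dimension.} For general convex combinations, I would reduce to the lines case in the same spirit. Given $b=\sum_j\lambda_jb^j$, take optimal $x^j$. Decompose each $x^j-x^{j_0}$ conformally into Graver elements. Then, using a Carathéodory-type rounding with the period $M$ absorbing the divisibility, pick sub-multisets whose sums give an integer point $x$ with $Ax=b$. Finally, bound $f(x)\le\sum_j\lambda_jf(x^j)$ via separable convexity applied coordinatewise, which is valid because the combinations are conformal.

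\textbf{Main obstacle.} The main obstacle is the rounding step: showing that the period $M$ depends only on $A$ yet guarantees that exact integral sub-sums exist. This is essentially an integer decomposition property for dilations of polytopes with vertices among Graver images. I would first try a Steinitz-lemma or pigeonhole argument: any $M$-multiple of a lattice vector in $\mathrm{cone}$ of the Graver images can be written with integer multiplicities once repeated pieces are grouped by $M$.
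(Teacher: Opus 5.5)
Your one-dimensional step is the right idea and matches the paper's midpoint lemma (\cref{lemma:idp-implies-midpoint-convexity}). Splitting $x^{t+1}-x^{t-1}$ conformally into two halves, each with $A$-image $Md'$, gives $2h(b^t)\le h(b^{t-1})+h(b^{t+1})$. However, the existence of that split \emph{is} the integer decomposition property (for $k=2$, after flipping column signs), and your justification does not give it. Taking $M$ divisible by all vertex denominators only makes the dilated polyhedron \emph{integral}, and integral polyhedra need not have the IDP. The paper closes this in two steps. First, it passes to the single matrix whose columns are all of $\{-\Delta,\dots,\Delta\}^m$, since adding columns and merging duplicate columns preserve the IDP. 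Second, it dilates the resulting integral polyhedron once more by its number of columns $(2\Delta+1)^m$, using that $(n-1)$-dilations of integral polyhedra have the IDP. Your reformulation via $\{y\in[0,1]^{\#}:\dots\}$ has one variable per Graver piece, an unbounded number, so no dilation depending only on $A$ comes out of it. Also, $x^1-x^0\notin\ker A$, so the conformal Graver decomposition must be of $[x^1-x^0;-Md]$ with respect to $[A\;I]$.

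The genuine gap is the passage to dimension $\ge2$. Convexity along lines does not imply convex extensibility. For example, the indicator of $\{(1,0),(0,1),(-1,-1)\}$ is convex on every lattice line, yet $(0,0)$ is the average of these three points. So a real multi-dimensional argument is needed, and the one you sketch fails. You produce a single integral $x$ and derive $f(x)\le\sum_j\lambda_jf(x^j)$ ``coordinatewise'' from conformality, which is a property of the points alone. Such an inequality would then hold for every separable convex $f$ at those same points, including linear ones, which forces $x=\sum_j\lambda_jx^j$; that point is typically not integral. Moreover, pieces taken from decompositions of different differences $x^j-x^{j_0}$ are not conformal to a common vector, so the same-orthant superadditivity cannot be applied to them jointly.

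The paper instead reduces to an unweighted zero-sum multiset $z^1+\dots+z^k=\veczero$. It then builds a whole multiset of solutions of $Ax=r$ that is row-majorized by $[x^1,\dots,x^k]$ and takes the best one. This needs four ingredients.
\begin{itemize}
\item Pairwise conformal exchanges move every $z^i$ into a box of radius $B=\O(G)^{d+1}$ while staying on the grid. They use Graver elements of $[A\;I]$ (identity on the first $d$ rows) and pigeonhole $M$ pieces with the same projection (\cref{lemma:reduction-to-bounded-box}).
\item The bounded zero-sum multiset is split into zero-sum parts of size at most $W=\O(dB)^d$, and each part is padded to $2^\ell$ elements with dummy solutions $x^*$.
\item Choosing $M=2^\ell N$ ensures that $\ell$ rounds of your midpoint step keep every intermediate average on the grid (\cref{lemma:vector-rearrangement-with-insertion}).
\item The dummies are removed via a steep-slope modification of $f$, and $h=-\infty$ is treated separately (\cref{lemma:rearrangement-to-convexity}).
\end{itemize}
Your ``Carath\'eodory-type rounding with $M$ absorbing the divisibility'' supplies none of these ingredients.
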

We note that it is known that an $M$ satisfying \cref{theorem:general-convex-extensibility} must be at least $2^{\Omega(\Delta^m)}$~\cite{DBLP:journals/mp/Klein22}. \cref{theorem:general-convex-extensibility} captures integer programs in standard form $Ax=b$, $x\in\Z_{\ge0}^n$, by adding an indicator function of the nonnegative orthant to $f$, as well as inequality form by introducing slack variables. We note that the parameterization by $m$ and $\Delta$ is different from the parameterization by $n$ and $\Delta$ in~\cite{DBLP:conf/soda/EisenbrandR26}, who consider integer linear programs in inequality form.

As a first step to show \cref{theorem:general-convex-extensibility}, we establish that periodic convexity along $1$-dimensional domains is equivalent to the integer decomposition property (IDP) of dilated polyhedra defined by the constraint matrix $A$. Polyhedra that have the IDP, also called normal or integrally closed, and their connections to integer programming are well-studied~\cite{10.1007/978-3-642-95322-4_2,DBLP:journals/combinatorics/CoxHHH14,cox2024toric,DBLP:journals/dmtcs/KolmanKT20,DBLP:books/daglib/0090562}. Despite this, the simple connection to periodic convexity appears to not have been mentioned in the literature. After establishing the equivalence, we apply Graver basis techniques to extend this convexity to higher-dimensional domains as in \cref{theorem:general-convex-extensibility}. Additionally, we provide parameterized bounds on the needed dilation to establish (relaxed notions of) the IDP for polyhedra defined by block-structured constraint matrices, which yields a variant of \cref{theorem:general-convex-extensibility} with an improved bound on $M$ when $A$ is block-structured.

These periodic convexity results are exploited to derive simple FPT algorithms for block-structured integer programs. In particular, we treat both two-stage stochastic integer programs in \cref{theorem:2-stage-algorithm}, as well as $n$-fold integer programs in \cref{theorem:n-fold-algorithm}. These algorithms are the first to simultaneously support large entries in the global parts of the constraint matrix as well as (nonlinear) separable convex objective functions, and affirmatively answer a question raised by Koutecký~\cite{DBLP:conf/iwpec/Koutecky25}.

\begin{restatable}{theorem}{theoremtwostagealgorithm}
    \label{theorem:2-stage-algorithm}
    Let $C_i\in\Z^{t\times r}$, $D_i\in\{-\Delta,-\Delta+1,\dots,\Delta\}^{t\times s}$, $l_0,u_0\in\Z^r$, $l_i,u_i\in\Z^s$, and $b_i\in\Z^t$ for $i\in[n]$. Let $f\colon\R^{r+sn}\to\R$ be separable convex and accessible through a comparison oracle on $\Z^{r+sn}$. An optimal solution to the two-stage stochastic integer program
    \begin{align*}
        \min\bigl\{f(x_0,x_1,\dots,x_n)\bigm\vert C_ix_0+D_ix_i=b_i,\,{}&l_i\le x_i\le u_i,\,x_i\in\Z^s,\,i\in[n],\\
        &l_0\le x_0\le u_0,\,x_0\in\Z^r\bigr\}
    \end{align*}
    can be found in time $2^{\O((\sqrt t\Delta)^t\cdot r+r^2\log r)}\cdot s\log^{\O(1)}(s)\cdot n\cdot L^{\O(1)}$, where $L=\log\max_{i\in\{0,1,\dots,n\}}\linebreak\|u_i-l_i\|_\infty$.
\end{restatable}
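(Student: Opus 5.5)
We reduce the problem to a convex minimization over the first-stage variables $x_0$, after fixing their residue modulo a period $M$. For each block $i\in[n]$ define the value function
\[
  h_i(d)=\min\bigl\{f_i(y)\bigm\vert D_iy=d,\,l_i\le y\le u_i,\,y\in\Z^s\bigr\},
\]
where $f_i$ is the separable part of $f$ on $x_i$, with the bounds absorbed into $f_i$ as indicator functions. The problem then becomes
\[
  \min\Bigl\{f_0(x_0)+\sum_{i\in[n]}h_i(b_i-C_ix_0)\Bigm\vert l_0\le x_0\le u_0,\,x_0\in\Z^r\Bigr\}.
\]

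By \cref{theorem:general-convex-extensibility} applied to $D_i$ (with $m=t$), there is an $M=2^{\O((\sqrt t\Delta)^t)}$ such that each $h_i$ is convex extensible on every translate $q+M\Z^t$.

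Fix a phase $p\in\{0,\dots,M-1\}^r$ and restrict to $x_0=p+Mz$ with $z\in\Z^r$. Then $b_i-C_ix_0=(b_i-C_ip)-MC_iz$ lies in the single lattice translate $(b_i-C_ip)+M\Z^t$. The map $z\mapsto b_i-C_ip-MC_iz$ is affine into that translate, so $z\mapsto h_i(b_i-C_ip-MC_iz)$ is the restriction to $\Z^r$ of the composition of a convex extension $\bar h_i$ with an affine map. Hence it is convex extensible on $\Z^r$. Together with the separable convex $f_0(p+Mz)$, the restricted objective $g_p(z)$ is a convex-extensible function on the integer points of a box in $\Z^r$.

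There are $M^r=2^{\O((\sqrt t\Delta)^t r)}$ phases. For each phase we minimize $g_p$ over an $r$-dimensional box using an integer convex minimization method with value or comparison access, for example a cutting-plane or Lenstra-type algorithm for convex-extensible functions in fixed dimension. This costs about $2^{\O(r^2\log r)}$ times a polynomial in $L$ oracle evaluations of $g_p$, which accounts for the $r^2\log r$ term in the exponent.

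Each evaluation of $g_p(z)$ requires solving $n$ independent small IPs $h_i(d)$ with $t$ rows and coefficients bounded by $\Delta$. Standard proximity/Graver-based separable convex IP algorithms with few rows (e.g.\ Eisenbrand--Weismantel-style proximity plus scaling) solve these in time $g(t,\Delta)\cdot s\log^{\O(1)}s\cdot L^{\O(1)}$. This gives the $s\log^{\O(1)}(s)\cdot n$ factor.

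The main obstacle is the convex minimization step with only a comparison oracle for $f$. The minimizer needs to compare values of $g_p$, which are sums of $f$-values at different points, and a comparison oracle on $f$ does not obviously support that. The first thing I would try is to note that evaluating $h_i$ yields explicit optimal solutions $x_i$. The objective $g_p(z)$ is then $f$ evaluated at an explicit full vector $(p+Mz,x_1,\dots,x_n)$, so comparing $g_p(z)$ with $g_p(z')$ is one oracle comparison of $f$ at two explicit points. For the inner problems, the subproblem solvers must also work with comparisons of partial vectors. For this I would extend each $x_i$ with fixed values in the other coordinates, which preserves comparisons by separability. A secondary difficulty is that convex-minimization algorithms in fixed dimension usually assume value access. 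I would therefore use a comparison-based method such as halving a lattice-width direction with a convexity test along lines, accepting the $2^{\O(r^2\log r)}L^{\O(1)}$ overhead.
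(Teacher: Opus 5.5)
Your architecture matches the paper's. You use block value functions, \cref{theorem:general-convex-extensibility} with $m=t$ for a uniform period $M$, phase guessing for $x_0$, the affine-composition argument, and a fixed-dimension comparison-based minimizer. For the last, the paper uses the algorithm of Veselov et al.~\cite{DBLP:journals/dam/VeselovGZC20} for discrete convic functions, which settles your ``main obstacle''. You also compare $g_p(z)$ with $g_p(z')$ by one $f$-oracle call at the two completed solutions, as the paper does.

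The genuine gap is that you never handle infeasible second stages. Your $g_p$ is convex extensible, but it equals $+\infty$ wherever some $h_i(b_i-C_ip-MC_iz)$ is infeasible. A comparison-based minimizer learns nothing from comparing two such points. Concretely, take one block with $D_1=[1]$, $C_1=[1]$, $l_1=u_1=0$, which forces $x_0=b_1$. For the right phase, $g_p$ is finite at a single $z$ in a box of side about $\|u_0-l_0\|_\infty/M$; for every other phase, $g_p\equiv\infty$. Any comparison-based method then needs on the order of $2^L/M$ queries, not $L^{\O(1)}$. Locating the finite region is the two-stage feasibility problem itself, so it cannot be outsourced. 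The same issue affects your inner solver, since the near-linear block algorithms expect an initial feasible solution.

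The missing idea is the paper's slack-penalty trick. Replace $D_i$ by $[D_i,I]$ with slacks $s_i$, and add $\delta(s)=\alpha\|s\|_1$. Implement the penalty by comparing $(\|s\|_1,\text{objective})$ lexicographically, so $\alpha$ is never needed. This repairs everything:
\begin{itemize}
\item Since $[D_i,I]\in\{-\Delta,\dots,\Delta\}^{t\times(s+t)}$ still has $t$ rows, \cref{theorem:general-convex-extensibility} gives the same period $M$.
\item The penalized value functions are finite everywhere: with $\hat b_i=b_i-C_i(p+Mz)$, the point $x_i=l_i$, $s_i=\hat b_i-D_il_i$ is feasible.
\item The fixed-phase objective is therefore finite and convex extensible, hence discrete convic, and Veselov et al.\ applies with $2^{\O(r^2\log r)}L^{\O(1)}$ work.
\item The slacks can be boxed to within $s\Delta\|u_i-l_i\|_\infty$ of $\hat b_i-D_il_i$. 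This supplies the initial feasible solution and keeps the inner domain width polynomial in $s\Delta\|u-l\|_\infty$.
\item The original program is infeasible iff an optimal penalized solution has $s\neq\veczero$.
\end{itemize}
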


\begin{restatable}{theorem}{theoremnfoldalgorithm}
    \label{theorem:n-fold-algorithm}
    Let $B\in\Z^{r\times t}$, $D_i\in\{-\Delta,-\Delta+1,\dots,\Delta\}^{s\times t}$, $b_0\in\Z^r$, $l_i,u_i\in\Z^t$, and $b_i\in\Z^s$ for $i\in[n]$. Let $f\colon\R^{tn}\to\R$ be separable convex and accessible through a comparison oracle on $\Z^{tn}$. An optimal solution to the $n$-fold integer program
    \begin{equation}
        \label{eq:n-fold-ip}
        \begin{aligned}
            \min\bigl\{f(x_1,\dots,x_n)\bigm\vert{}&Bx_1+\dots+Bx_n=b_0,\\
            &D_ix_i=b_i,\,l_i\le x_i\le u_i,\,x_i\in\Z^t,\,i\in[n]\bigr\}
        \end{aligned}
    \end{equation}
    can be found in time $2^{(\O(s\Delta)^s\cdot t)^t}\cdot r\cdot n\log^{\O(1)}(n)\cdot L^{\O(1)}$, where $L=\log\max_{i\in[n]}\|u_i-l_i\|_\infty$.
\end{restatable}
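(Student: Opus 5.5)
The plan is to aggregate the $n$ blocks through their sum and to apply periodic convexity to the resulting $t$-dimensional value function. The large matrix $B$ then only enters through a linear constraint in fixed dimension $t$.

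\textbf{Step 1: reformulation.} Define
\[
H(w)=\min\Bigl\{\textstyle\sum_{i\in[n]}f_i(x_i)\Bigm\vert \sum_{i\in[n]}x_i=w,\ D_ix_i=b_i,\ l_i\le x_i\le u_i,\ x_i\in\Z^t,\ i\in[n]\Bigr\}
\]
for $w\in\Z^t$. Here $f=\sum_i f_i$, with the $f_i$ separable convex since $f$ is. The program~\eqref{eq:n-fold-ip} equals $\min\{H(w)\mid Bw=b_0,\ w\in\Z^t\}$.

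The function $H$ is the value function of an IP whose constraint matrix is an $n$-fold matrix. Its global block is $[I_t\ \cdots\ I_t]$ and its local blocks are $D_i$; the box constraints go into the separable convex objective. Only the $t$ global right-hand sides are allowed to vary, while the local right-hand sides $b_i$ stay fixed. Restricting a periodically convex function to a coordinate subspace through a fixed integral point preserves periodic convexity.

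\textbf{Step 2: period independent of $n$.} Invoke the block-structured variant of \cref{theorem:general-convex-extensibility}, i.e.\ the bounds on the dilation needed for the (relaxed) IDP of polyhedra defined by $n$-fold matrices. This should show that $H$ is convex extensible on every $r+M\Z^t$ with $M=2^{\O(s\Delta)^s\cdot t}$, or a similar bound independent of $n$ and of $B$. I expect this to be the main obstacle. \Cref{theorem:general-convex-extensibility} applied directly has $m=s n+t$ rows and gives a useless period.

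The approach I would try first is this. Use the Graver basis elements of the $n$-fold matrix: their projection onto the global part has $\ell_1$-norm bounded in terms of $s$, $t$, $\Delta$ via Steinitz-type arguments on the local blocks. Combine this with the $1$-dimensional IDP-to-convexity equivalence along lines in $w$-space. Then lift to all of $\Z^t$ with the same Graver argument used for \cref{theorem:general-convex-extensibility}, now in dimension $t$ rather than $m$.

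\textbf{Step 3: guess the phase and solve a convex problem.} Guess the phase $r\in\{0,\dots,M-1\}^t$, giving $M^t=2^{(\O(s\Delta)^s\cdot t)^{\O(1)}\cdot t}$ guesses. For each guess, compute a lattice basis of the affine lattice $\{w\in r+M\Z^t\mid Bw=b_0\}$ via Hermite normal form. This costs $r\cdot\mathrm{poly}$ time and is insensitive to the size of $B$. Parametrize this lattice by $z\in\Z^{d}$ with $d\le t$.

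Then minimize the convex extensible function $z\mapsto H(w(z))$ over a box of range $2^{\O(L)}$. This is integer convex minimization in fixed dimension $d\le t$, done with a Lenstra/cutting-plane style method using only a comparison (evaluation) oracle. It needs $2^{\O(t\log t)}\cdot L^{\O(1)}$ oracle calls, with comparisons of function values across different $w$ realized by comparing the optimal block solutions via the oracle on $f$.

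\textbf{Step 4: evaluate $H$.} Each evaluation of $H(w)$ is an $n$-fold IP whose coefficients are all bounded by $\max(\Delta,1)$ and whose global block has only $t$ rows. Known FPT algorithms for $n$-fold IP with separable convex objectives solve it in time $(t\Delta)^{\O(t^2)}\cdot n\log^{\O(1)}(n)\cdot L^{\O(1)}$ and return an optimal solution.

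Multiplying the number of guesses, the oracle calls per guess, and the cost per evaluation should give the claimed $2^{(\O(s\Delta)^s\cdot t)^t}\cdot r\cdot n\log^{\O(1)}(n)\cdot L^{\O(1)}$. The overall bound therefore hinges on obtaining the $n$-independent period $M$ in Step 2.
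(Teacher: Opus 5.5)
\textbf{The gap is Step 2.} Your architecture is the paper's: the aggregation $y=x_1+\dots+x_n$ is the reformulation (\ref{eq:reformulated-uniform-n-fold-matrix}). Phase guessing, Veselov et al.\ with a comparison oracle, and a small-coefficient dual-treedepth solver for evaluations are exactly \cref{lemma:fixed-phase-value-function-reformulation} with $C=-I$, $d=t$, $g\equiv0$. But Step 2 carries all the content, and you only announce it with a guessed period. Your sketch does not close it:
\begin{itemize}
\item \cref{proposition:idp-iff-convex-extensibility-along-line} and \cref{lemma:idp-implies-midpoint-convexity} presuppose an IDP dilation.
\item \cref{lemma:general-scaled-idp} gives a dilation that grows with the $t+sn$ rows.
\item \cref{proposition:n-fold-scaled-idp} gives an $n$-independent dilation, but it is $2^{(2^{(s\Delta)^{\O(s)}}t)^t}$, far too large for the stated running time.
\item Restricting a full-space statement, as in your Step~1 remark, is not available: the paper has $n$-independent periodic convexity of the full $n$-fold value function only along lines.
\end{itemize}

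\textbf{The missing idea} is to prove the \emph{restricted} IDP directly. It exploits that only the $t$ global rows vary, so differences of solutions have zero local right-hand side; this is \cref{lemma:stacked-scaled-ridp}, and the relevant matrices lie in $\stacked(t,1,\O(s\Delta)^s)$. The argument has three steps.
\begin{itemize}
\item Decompose $x\ge\veczero$ with $Ax=k[b;\veczero]$ into conformal Graver elements of the local part. Each maps to a global vector of $\ell_\infty$-norm at most $U=\O(s\Delta)^s$.
\item Steinitz plus pigeonhole over the $\bar k=\O(tU)^t$ points of a box splits $x$ into pieces with right-hand sides $k^j[b;\veczero]$, $1\le k^j\le\bar k$ (\cref{claim:align-stacked}).
\item Regroup the pieces into $k$ parts with right-hand side $N[b;\veczero]$, using the IDP of the one-row polyhedron $\{y\ge\veczero:\sum_jk^jy_j=N\}$ for $N=2^{\O(\bar k)}$.
\end{itemize}
The passage to convexity on all of $r+M\Z^t$ via \cref{lemma:reduction-to-bounded-box,lemma:vector-rearrangement-with-insertion,lemma:rearrangement-to-convexity}, with $G=\O(ts\Delta)^{ts}$, multiplies this by $\O(G)^{t(t+1)}$. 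So $M=2^{(\O(s\Delta)^s t)^t}$, not $2^{\O(s\Delta)^s\cdot t}$, and this is what makes $M^t$ match the claimed bound.

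\textbf{Infinite values.} Your $H$ can be $+\infty$, because the local constraints need not be compatible with a given $w$. Yet Veselov et al.\ need a finite objective, and the $n$-fold solver needs a feasible starting point. The paper adds slack variables $s$ with a penalty $\alpha\|s\|_1$, implemented lexicographically. This makes the fixed-phase objective finite and gives the trivial start $x=l$ with $s$ absorbing the residual. For this reason periodic convexity is proved for $[A,I]$ rather than $A$; it still lies in $\stacked(t,1,\O(s\Delta)^s)$.

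\textbf{Reparametrization.} The Hermite-normal-form step is a detour. To keep $\log\rho$ free of $\log\|B\|$, you would have to bound the radius in $z$-coordinates, e.g.\ by LLL-reducing the basis. The paper instead keeps $By=b_0$ and $\sum_il_i\le y\le\sum_iu_i$ as constraints $Vy\le w$, with $\rho\le n\max_i\|u_i-l_i\|_\infty$. The remaining cost factors are absorbed: $2^{\O(t^2\log t)}$ for Veselov et al.\ (not $2^{\O(t\log t)}$), and $(ts\Delta)^{\O(ts(t+s))}$ per evaluation.
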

The running times of our algorithms measure the number of arithmetic operations and comparison oracle calls performed, as is standard in the literature.

For both \cref{theorem:2-stage-algorithm,theorem:n-fold-algorithm}, we employ value function reformulations to obtain equivalent periodically convex integer programs on a parameterized number of variables. For a fixed phase of the variables, the objective functions become convex and the corresponding problems can be solved using the algorithm by Veselov et al.~\cite{DBLP:journals/dam/VeselovGZC20}. To evaluate the objective function, we solve block-structured IPs with small coefficients with the algorithm from~\cite{DBLP:conf/ipco/HunkenschroderKLV25}. We additionally extend the result from \cref{theorem:n-fold-algorithm} to obtain an algorithm that solves IPs with bounded dual treedepth and a bounded number of additional constraints with a bounded number of different large coefficients, which generalizes $n$-fold IPs~\cite{DBLP:journals/mor/EisenbrandHKKLO25,DBLP:conf/ipco/HunkenschroderKLV25}.

The parametric running time dependencies of \cref{theorem:2-stage-algorithm,theorem:n-fold-algorithm} are doubly exponential. This improves upon the previous algorithms that support large entries and linear objectives in two-stage stochastic IPs~\cite{DBLP:conf/soda/EisenbrandR26} and $n$-fold IPs~\cite{DBLP:journals/theoretics/CslovjecsekKLPP25}, which have triply exponential parametric dependencies in the regime of square block dimensions. The doubly exponential dependency of the two-stage stochastic IP algorithm matches the previous feasibility algorithm by Cslovjecsek et al.~\cite{DBLP:journals/theoretics/CslovjecsekKLPP25} and approaches the known $2^{2^{\delta(s+t)}}$ running time lower bound ($\delta>0$) for two-stage stochastic IPs with $r=1$ under the ETH~\cite{DBLP:journals/mp/JansenKL23}. We also note that previous proximity based two-stage stochastic IP algorithms~\cite{DBLP:conf/esa/CslovjecsekEPVW21,DBLP:conf/ipco/HunkenschroderKLV25} have a doubly exponential runtime dependence on the number $r$ of first-stage variables, whereas the algorithms by Eisenbrand and Rothvoss~\cite{DBLP:conf/soda/EisenbrandR26} and Cslovjecsek et al.~\cite{DBLP:journals/theoretics/CslovjecsekKLPP25}, and the algorithm from \cref{theorem:2-stage-algorithm} do not have this.

%% file: sections/introduction-periodic-convexity.tex
\section{Periodic Convexity of Value Functions of Separable Convex IPs}
\label{sec:intro-periodic-convexity}

In this section, we show the convex extensibility of value functions of the form $b\mapsto\linebreak\min\{f(x)\ \vert\ Ax=b,\,x\in\Z^n\}$ on lattice translates $r+M\Z^m$ for any $r\in\Z^m$ and some large number $M$ depending only on $A$. Recall that a function $g\colon D\to\R\cup\{\infty\}$ is convex extensible on a set $S\subseteq D\subseteq\R^m$ if there exists a convex function $\tilde g\colon\R^m\to\R\cup\{\infty\}$ so that $g(y)=\tilde g(y)$ for all $y\in S$. If the subdomain $S$ is not specified, we consider convex extensibility on the domain $D$ of the function. Thus, we wish to show that the function $h\colon\Z^n\to\R\cup\{-\infty,\infty\}$ given by $h(z)=\min\{f(x)\ \vert\ Ax=r+Mz,\,x\in\Z^n\}$ is convex extensible for any $r\in\Z^m$. Here, we use the convention that infeasible problems have an objective value of $\infty$ and unbounded problems have an objective value of $-\infty$. In the proof of \cref{lemma:rearrangement-to-convexity}, we make the straightforward observation that it is sufficient to show that
\begin{equation}
    \label{eq:0-centered-convexity}
    h(\veczero)\le\tfrac1k\sum_{i\in[k]}h(z^i)
\end{equation}
when $z^1,\dots,z^k\in\Z^n$ are integral points that sum to the zero vector $\veczero$ and all $h(z^i)$ are finite. To derive (\ref{eq:0-centered-convexity}), we let $x^i$ be a solution attaining the minimum value $f(x^i)=h(z^i)$ for each $i\in[k]$ and modify these solutions to obtain new solutions $\hat x^i$ to the systems $A\hat x^i=r+M\hat z^i$ without increasing the total sum of the function values, i.e., $f(\hat x^1)+\dots+f(\hat x^k)\le f(x^1)+\dots+f(x^k)$. Multiple of such modifications are applied in a pairwise fashion, eventually resulting in $\hat z^i=\veczero$. In this way, the best resulting solution to $Ax=r$ will be a witness for (\ref{eq:0-centered-convexity}).

To accurately capture the preservation of objective values, we relate the intermediary solution sets through the following well-known vector partial order: a vector $u\in\R^k$ is \emph{majorized} by $v\in\R^k$, denoted by $u\prec v$, if $\sum_{i\in[k]}g(u_i)\le\sum_{i\in[k]}g(v_i)$ for all univariate convex functions $f\colon\R\to\R\cup\{\infty\}$. See~\cite{marshall1979inequalities} for a comprehensive overview of this partial order. To capture separable convex functions in an $n$-dimensional space, we consider row-wise majorization of matrices. For vectors $x^1,\dots,x^k\in\R^n$, we use $[x^1,\dots,x^k]$ to denote the $n\times k$ matrix with columns $x^1,\dots,x^k$. In this way, $f(\hat x^1)+\dots+f(\hat x^k)\le f(x^1)+\dots+f(x^k)$ holds universally for all separable convex functions if and only if each row of $[\hat x^1,\dots,\hat x^k]$ is majorized by the corresponding row of $[x^1,\dots,x^k]$. We denote this by $[\hat x^1,\dots,\hat x^k]\rowmajorizedby[x^1,\dots,x^k]$.

In \cref{sec:scaled-idp} and \cref{sec:periodic-convexity}, we will provide the needed pairwise operations to establish (\ref{eq:0-centered-convexity}). Let $x^1,x^2\in\R^n$ and let $y\in\R^n$ describe a partial solution exchange $\hat x^1=x^1+y,\hat x^2=x^2-y$. Such modification $y$ preserves the objective value, i.e., $[\hat x^1,\hat x^2]\rowmajorizedby[x^1,x^2]$, if and only if the modification vector $y$ is \emph{conformal} to the difference $x^2-x^1$~\cite{marshall1979inequalities}. Here, a vector $y\in\R^n$ is conformal to a vector $u\in\R^n$, denoted by $y\sqsubseteq u$, if $y_iu_i\ge0$ and $|y_i|\le|u_i|$ for all $i\in[n]$. The partial order $\sqsubseteq$ additionally plays an important role in the Graver basis of an integral matrix, which we will define and use later in this section.

Since the arguments of \cref{sec:periodic-convexity} are limited to right-hand side variations $z^i$ that have restricted support, we will consider restricted value functions $h\colon\Z^d\to\R\cup\{\infty\}$ of the form $h(z)=\min\{f(x)\ \vert\ Ax=r+M[z;\veczero],\,x\in\Z^n\}$ for any $r\in\Z^m$. Here, $[u;v]$ denotes the vertical concatenation of the vector $u$ above the vector $v$.  When $d$ is significantly smaller than $m$, this restriction allows us to give stronger bounds on the period $M$. On the other hand, this regime is simultaneously sufficiently expressive to derive \cref{theorem:n-fold-algorithm}.

%% file: sections/scaled-idp.tex
\subsection{Dilations That Establish the Integer Decomposition Property}
\label{sec:scaled-idp}

Here, we will provide the first needed pairwise modification of solutions by showing that there is a positive integer $M$ so that if $x^1,x^2\in\Z^n$ and $r,z\in\Z^m$ are such that $Ax^1=r+M\cdot(-z)$ and $Ax^2=r+Mz$, then there exist $\hat x^1,\hat x^2\in\Z^n$ so that $A\hat x^1=A\hat x^2=r$ and $[\hat x^1,\hat x^2]\rowmajorizedby[x^1,x^2]$. This coincides with periodic midpoint convexity of the corresponding value function. We connect this to the integer decomposition property of dilated polyhedra.

A polyhedron is said to have the \emph{integer decomposition property}~\cite{10.1007/978-3-642-95322-4_2} (IDP) if for any $k\in\Z_{\ge0}$, any integral point $x\in kP\cap\Z^n$ in the $k$-dilation of $P$ can be written as the sum of $k$ integral points in the original polyhedron $P$, i.e., there exist $x^1,\dots,x^k\in P$ so that $x=x^1+\dots+x^k$. It is well-known that a rational polyhedron that has the IDP must be integral and that any $z$-dilation of an $n$-dimensional integral polyhedron has the IDP for any positive integer $z\ge n-1$. See Theorem 2.2.12 in~\cite{cox2024toric}. Therefore, there exists a dilation for any rational polyhedron so that the resulting dilated polyhedron has the IDP. We return to providing bounds on dilations that establish the IDP later this section.

Having the interest of parameterized algorithms in mind, we describe properties of parameterized classes of matrices and the polyhedra that are defined by such matrices as done in \cref{def:idp}. Here, a class of constraint matrices is a set of matrices with integral coefficients and some varying dimensions.
\begin{definition}
    \label{def:idp}
    A class $\mathcal A$ of constraint matrices has the integer decomposition property after an $M$-dilation if $M$ is a positive integer and the polyhedron $M\cdot\{x\in\R_{\ge0}^n:Ax=b\}=\{x\in\R_{\ge0}^n:Ax=Mb\}$ has the IDP for any $m\times n$ matrix $A\in\mathcal A$ and integral vector $b\in\Z^m$.

    Analogously, we say that $\mathcal A$ has the $d$-restricted integer decomposition property ($d$-rIDP) if it satisfies the latter condition for all $b=[b';\veczero]$ where $b'\in\Z^d$ and each $A\in\mathcal A$ has at least $d$ rows.
\end{definition}
The latter variant is introduced to treat periodic convexity restricted to variations of low dimension. It deserves to be noted that the class of matrices $A$ for which the undilated polyhedron $\{x\in\R_{\ge0}^n:Ax\le b\}$ has the IDP for all $b\in\Z^m$ is precisely the class of totally unimodular matrices~\cite{10.1007/978-3-642-95322-4_2}. Thus, the class of TU matrices has the IDP after a $1$-dilation.

The classes $\mathcal A$ of constraint matrices that we consider will be closed under inverting columns, i.e., $A\in\mathcal A$ implies that any matrix $A'$ arising from $A$ by inverting the sign of the coefficients in a column also satisfies $A'\in\mathcal A$. In this setting, we can make \cref{observation:idp-closed-under-inversion} by inverting the sign of the appropriate columns in the definition of the IDP.

\begin{observation}
    \label{observation:idp-closed-under-inversion}
    Let $\mathcal A$ be a class of constraint matrices that is closed under inverting columns and that has the $d$-rIDP after an $M$-dilation. Let $A\in\mathcal A$, $A\in\Z^{m\times n}$ and $x\in\Z^n$, $k\in\Z_{\ge0}$, $b\in\Z^d$. If $Ax=kM[b;\veczero]$, then there exists a decomposition $x=x^1+\dots+x^k$ with $x^i\in\Z^n$, $x^i\sqsubseteq x$, and $Ax^i=M[b;\veczero]$ for all $i\in[k]$.
\end{observation}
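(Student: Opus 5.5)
The plan is to reduce to the nonnegative case by flipping the signs of columns. Let $\sigma\in\{-1,1\}^n$ with $\sigma_j=1$ if $x_j\ge0$ and $\sigma_j=-1$ otherwise, and put $\Sigma=\operatorname{diag}(\sigma)$. Define $A'=A\Sigma$, the matrix obtained from $A$ by inverting the columns $j$ with $x_j<0$, and $x'=\Sigma x$, so that $x'=|x|\in\Z_{\ge0}^n$. Since $\mathcal A$ is closed under inverting columns, $A'\in\mathcal A$. Moreover $A'x'=A\Sigma\Sigma x=Ax=kM[b;\veczero]$.

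Next we dispose of $k=0$ and then apply the $d$-rIDP. If $k=0$, the only admissible decomposition is the empty sum. Then the statement requires $x=\veczero$, and the IDP definition with $k=0$ gives exactly this: $0\cdot P\cap\Z^n=\{\veczero\}$ for nonempty $P$. This is a degenerate case to check against the convention; for $k=0$ the hypothesis $Ax=\veczero$ need not force $x=\veczero$, so the observation should be read with $k\ge1$, or with the convention that the $0$-dilation is $\{\veczero\}$. For $k\ge1$, consider $P=\{y\in\R_{\ge0}^n: A'y=M[b;\veczero]\}$. By \cref{def:idp} this polyhedron has the IDP, since $[b;\veczero]$ has the required form. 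Its $k$-dilation is $kP=\{y\ge0: A'y=kM[b;\veczero]\}$, because dilating a polyhedron of this form scales the right-hand side. Hence $x'\in kP\cap\Z^n$, and the IDP gives integral $y^1,\dots,y^k\in P$ with $x'=y^1+\dots+y^k$.

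Finally, transform back by setting $x^i=\Sigma y^i$. Then $x^i\in\Z^n$, $\sum_i x^i=\Sigma x'=x$, and $Ax^i=A\Sigma y^i=A'y^i=M[b;\veczero]$. For conformality, each $y^i\ge0$ and $\sum_i y^i=x'$, so $0\le y^i_j\le x'_j=|x_j|$. Multiplying by $\sigma_j$ shows that $x^i_j$ has the sign of $x_j$ (or is zero) and satisfies $|x^i_j|\le|x_j|$, that is, $x^i\sqsubseteq x$.

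There is no real obstacle here. The only points needing care are that the dilation $kP$ is again of the form $\{y\ge0:A'y=kM[b;\veczero]\}$, and the degenerate case $k=0$.
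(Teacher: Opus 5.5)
Your proof is correct and is the paper's argument; the paper only says to invert the signs of the appropriate columns in the definition of the IDP. You pass to $A'=A\Sigma$ and $|x|$, apply the IDP of $\{y\ge0:A'y=M[b;\veczero]\}$ for $k\ge1$, flip back, and get conformality from $0\le y^i\le|x|$. Your caveat about $k=0$ is justified: as written, the statement fails for $k=0$ whenever $A$ has a nonzero integral kernel vector. This is harmless, because the paper only invokes the observation with $k\ge2$.
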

This simple observation paves the way for connecting the IDP to the periodic convex extensibility of integer program value functions along \emph{lines}, being $1$-dimensional affine subspaces. We first consider the special case of midpoint convexity in \cref{lemma:idp-implies-midpoint-convexity}, which yields the pairwise solution modification operation we will need to prove \cref{theorem:general-convex-extensibility}.

\begin{lemma}
    \label{lemma:idp-implies-midpoint-convexity}
    Let $\mathcal A$ be a class of constraint matrices that is closed under inverting columns and has the $d$-rIDP after an $M$-dilation. Let $A\in\mathcal A$, $A\in\Z^{m\times n}$ and $r\in\Z^m$ be given. Let $x^A,x^B\in\Z^n$ and $z\in\Z^d$ be such that $Ax^A=r+M[-z;\veczero]$ and $Ax^B=r+M[z;\veczero]$. Then there exist $\hat x^A,\hat x^B\in\Z^n$ so that $A\hat x^A=A\hat x^B=r$ and $[\hat x^A,\hat x^B]\rowmajorizedby[x^A,x^B]$.
\end{lemma}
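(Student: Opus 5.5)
Consider the difference $u=x^B-x^A\in\Z^n$. It satisfies $Au=2M[z;\veczero]$, which is exactly the hypothesis of \cref{observation:idp-closed-under-inversion} with $k=2$ and $b=z$. The observation therefore gives a conformal decomposition $u=y^1+y^2$ with $y^1,y^2\in\Z^n$, $y^i\sqsubseteq u$, and $Ay^1=Ay^2=M[z;\veczero]$.

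We then set
\[
\hat x^A=x^A+y^1,\qquad \hat x^B=x^B-y^2=x^A+y^1 .
\]
The two new points coincide, and $A\hat x^A=r+M[-z;\veczero]+M[z;\veczero]=r$. Likewise $A\hat x^B=r+M[z;\veczero]-M[z;\veczero]=r$. The exchange has the form $\hat x^A=x^A+y$, $\hat x^B=x^B-y$ with $y=y^1$, using $x^B-y^2=x^A+u-y^2=x^A+y^1$.

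The key fact needed is that an exchange vector conformal to $x^B-x^A$ preserves row majorization. This was recalled earlier from~\cite{marshall1979inequalities}. Since $y^1\sqsubseteq u=x^B-x^A$, we obtain $[\hat x^A,\hat x^B]\rowmajorizedby[x^A,x^B]$.

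If we want to verify this coordinatewise, it is elementary. In each row $j$ the pair $(x^A_j,x^B_j)$ becomes $(x^A_j+y_j,\,x^B_j-y_j)$, where $y_j$ lies between $0$ and $u_j$. Both new entries then lie in the interval between $x^A_j$ and $x^B_j$, and their sum is unchanged. A two-entry vector with the same sum and entries inside the original range is majorized by the original, by convexity of $g$ on that interval.

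The only substantive step is obtaining a conformal decomposition of $u$, and the observation supplies exactly that. The points to check are that $u$ indeed satisfies $Au=2M[z;\veczero]$ with $k=2$, and that the case $z=\veczero$ is covered. For $z=\veczero$ one can simply take $y=\veczero$, or note that the observation still applies.
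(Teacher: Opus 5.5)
Your strategy is the paper's: decompose $u=x^B-x^A$ via \cref{observation:idp-closed-under-inversion} with $k=2$, $b=z$, and shift one conformal piece. However, the construction you actually write down breaks the majorization step.

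You set $\hat x^B=x^B-y^2$, which equals $x^A+y^1=\hat x^A$. This is not an exchange $\hat x^A=x^A+y$, $\hat x^B=x^B-y$ with a common $y=y^1$. That exchange would be $\hat x^B=x^B-y^1=x^A+y^2$, and the identity $x^B-y^2=x^A+y^1$ you cite does not turn one into the other. With your choice, row $j$ becomes $(x^A_j+y^1_j,\,x^A_j+y^1_j)$. Its sum $2x^A_j+2y^1_j$ differs from $x^A_j+x^B_j=2x^A_j+y^1_j+y^2_j$ whenever $y^1_j\ne y^2_j$. So majorization already fails for the linear (hence convex) functions $t\mapsto\pm t$.

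Nothing forces $y^1=y^2$. Take the class of $1\times2$ matrices with entries in $\{\pm1\}$, which are totally unimodular, so the IDP holds with $M=1$. Let $A=[1,1]$, $r=0$, $z=1$, $x^A=(0,-1)^\top$, $x^B=(1,0)^\top$. The only conformal decomposition of $u=(1,1)^\top$ is $\{y^1,y^2\}=\{(1,0)^\top,(0,1)^\top\}$. Your two coinciding points then have first-row sum $2$ or $0$ instead of $1$. Your coordinatewise paragraph silently analyzes the correct pair $(x^A_j+y_j,\,x^B_j-y_j)$ with a single $y$, not the pair you defined.

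The repair is to take $\hat x^B=x^B-y^1=x^A+y^2$, as the paper does. Feasibility is unaffected, since $Ay^1=Ay^2=M[z;\veczero]$ gives $A\hat x^B=r$ either way. Your coordinatewise argument then yields $[\hat x^A,\hat x^B]\rowmajorizedby[x^A,x^B]$: both new entries lie between $x^A_j$ and $x^B_j$, and the sum is unchanged. In general the two new solutions are distinct; they coincide only when $y^1=y^2$.
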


\begin{proof}
    Let $y=x^B-x^A$, which satisfies $Ay=2M[z;\veczero]$. We use \cref{observation:idp-closed-under-inversion} to find a decomposition $y=y^1+y^2$ with $y^i\in\Z^n$, $y^i\sqsubseteq y=x^B-x^A$, and $Ay^i=M[z;\veczero]$ for all $i\in[2]$. Let $\hat x^A=x^A+y^1$ and $\hat x^B=x^B-y^1=x^A+y^2$, which immediately implies that $[\hat x^A,\hat x^B]\rowmajorizedby[x^A,x^B]$. Furthermore, we have that $A\hat x^A=Ax^A+Ay^1=r+M[-z;\veczero]+M[z;\veczero]=r$. The condition for $\hat x^B$ is verified analogously.
\end{proof}

The relation between dilations establishing the IDP and periodic convex extensibility is more precisely captured in \cref{proposition:idp-iff-convex-extensibility-along-line}.

\begin{proposition}
    \label{proposition:idp-iff-convex-extensibility-along-line}
    Let $\mathcal A$ be a class of constraint matrices that is closed under inverting columns. Then $\mathcal A$ has the IDP after an $M$-dilation if and only if the value function $h\colon\Z^m\to\R\cup\{\infty\}$ given by $h(z)=\min\{f(x)\ \vert\ Ax=r+Mz,\,x\in\Z^n\}$ is convex extensible on any line for any separable convex $f$ and offset vector $r\in\Z^m$.
\end{proposition}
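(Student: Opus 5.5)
For the forward direction, I reduce convex extensibility along a line to a discrete convexity statement. A function on the integer points $z_0+t w$, $t\in\Z$, of a line (with $w$ primitive) is convex extensible iff it has no value $-\infty$ inside its effective domain, the set $\{t: h<\infty\}$ is an integer interval, and the sequence $t\mapsto h(z_0+tw)$ satisfies $h(t-1)+h(t+1)\ge 2h(t)$ on that interval. A slightly more robust route is to show directly that for any integers $t_1<t<t_2$ with $t=\lambda t_1+(1-\lambda)t_2$ we have $h(t)\le \lambda h(t_1)+(1-\lambda)h(t_2)$. This also handles the domain being an interval and excludes $-\infty$ next to finite values.

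I will prove this in the multiplied-out form. Let $k=t_2-t_1$, $a=t-t_1$ and $b=t_2-t$. Let $x^1,x^2$ be optimal for $t_1,t_2$ (if a value is $-\infty$, use near-optimal solutions of arbitrarily low value). Set $y=x^2-x^1$, so $Ay=Mkw$. By \cref{observation:idp-closed-under-inversion} (with $d=m$), decompose $y=y^1+\dots+y^k$ with $y^i\sqsubseteq y$ and $Ay^i=Mw$.

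Now take the $k$ solutions $x^1+y^1+\dots+y^j$ for suitable indices, in the style of \cref{lemma:idp-implies-midpoint-convexity}. Since all $y^i$ are conformal to $y$, the points $x^1+\sum_{i\in S}y^i$ lie coordinatewise "between" $x^1$ and $x^2$. The key majorization fact is then the following: the $k$ vectors $x^1+Y_j$, where $Y_j$ is a sum of $a$ of the $y^i$, chosen cyclically as $Y_j=y^{j+1}+\dots+y^{j+a}$ with indices mod $k$, satisfy
\[
[x^1+Y_1,\dots,x^1+Y_k]\rowmajorizedby[\underbrace{x^1,\dots,x^1}_{b},\underbrace{x^2,\dots,x^2}_{a}].
\]
Coordinatewise, each row on the left consists of numbers in $[x^1_\ell,x^2_\ell]$ with the same sum as the right row, which is two-valued at the extremes. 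A vector with entries in an interval is majorized by the vector placing the same total mass at the endpoints, and here the entries sum correctly because every $y^i$ is used exactly $a$ times. This gives $k\,h(t)\le b\,h(t_1)+a\,h(t_2)$, which is the desired inequality, since each $x^1+Y_j$ is feasible for $r+M(z_0+tw)$.

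For the converse, assume convex extensibility along lines for all separable convex $f$ and $r$. Take $A\in\mathcal A$, $b$, $k$, and an integral $x\ge 0$ with $Ax=kMb$. Choose $f$ to be the indicator of the box $\{u: 0\le u\le x\}$ (coordinatewise, after column inversion so that $x\ge0$), and set $r=\veczero$. The line through $0$ and $b$ has $h(0)=0$ (witnessed by $u=0$) and $h(kb)=0$ (witnessed by $u=x$). Convexity then gives $h(b)=0$ if $k\ge1$, so there is an integral $x^1$ with $0\le x^1\le x$ and $Ax^1=Mb$. Apply the same argument to $x-x^1$, which satisfies $A(x-x^1)=(k-1)Mb$, and induct on $k$ to obtain the decomposition into $k$ integral points of $\{u\ge0: Au=Mb\}$.

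The main obstacle I expect is the careful handling of the extended-value conventions in the forward direction. This includes showing that the characterization of convex extensibility on the integer points of a line is correct, i.e., that the weighted inequality for all triples suffices, including $-\infty$ and empty-domain cases. It also includes checking the cyclic-sum majorization claim rigorously.
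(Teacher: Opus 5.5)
Your proof is correct. The converse direction is exactly the paper's argument: the indicator of the box $\veczero\le u\le x$, the line through $\veczero$ and $kb$, and induction on $k$.

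For the forward direction you take a genuinely different route. The paper fixes the decomposition $y=y^1+\dots+y^q$ and builds the index set $S$ greedily, one index at a time. At each step it applies the superadditivity lemma for separable convex functions over same-orthant steps (\cref{lemma:separable-convex-superadditivity}, cited from Murota) to the remaining steps. An averaging argument then picks the next index, and induction on $p$ maintains $f(x^A+\sum_{i\in S}y^i)\le\tfrac pqf(x^B)+\tfrac{q-p}qf(x^A)$.

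You instead use a single double-counting argument. The $k$ cyclic windows of length $a$ use every $y^i$ exactly $a$ times. Hence each row of $[x^1+Y_1,\dots,x^1+Y_k]$ has entries in the interval spanned by $x^1_\ell$ and $x^2_\ell$, with the same sum as $b$ copies of $x^1_\ell$ and $a$ copies of $x^2_\ell$. Summing the chord inequality $g(u)\le\frac{R-u}{R-L}g(L)+\frac{u-L}{R-L}g(R)$ over the entries then gives the majorization.

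What your route buys:
\begin{itemize}
\item It is self-contained: it needs neither \cref{lemma:separable-convex-superadditivity} nor an induction.
\item It proves a stronger multiset statement, $[x^1+Y_1,\dots,x^1+Y_k]\rowmajorizedby[x^1,\dots,x^1,x^2,\dots,x^2]$. This is the natural weighted analogue of \cref{lemma:idp-implies-midpoint-convexity}, and it is the form one would need for the pairwise weighted-averaging variant the paper mentions after \cref{lemma:rearrangement-to-convexity}. The paper's greedy argument only produces a single good point.
\item Your handling of $-\infty$ via near-optimal solutions is more direct than the paper's detour through truncated objectives.
\end{itemize}

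One small fix: use $\varepsilon$-optimal $x^1,x^2$ in the finite case as well, and let $\varepsilon\to0$. As written, you assume the minimum is attained, which can fail for general separable convex $f$; for example, $f(x)=e^{x}$ with $A=[0]$ gives infimum $0$ that is never attained.
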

We do not need \cref{proposition:idp-iff-convex-extensibility-along-line} in our further arguments, but believe that this connection is of independent interest. To show \cref{proposition:idp-iff-convex-extensibility-along-line}, we use the following lemma from~\cite{DBLP:books/daglib/0030784}:

\begin{lemma}[Lemma 3.3.1 in~\cite{DBLP:books/daglib/0030784} for $\alpha_i=1$]
    Let $f$ be separable convex and let $x\in\R^n$. Let $g^1,\dots,g^k\in\R^n$ be from the same orthant. Then
    \[
        \sum_{i\in[k]}\bigl(f(x+g^i)-f(x)\bigr)\le f\bigl(x+\sum_{i\in[k]}g^i\bigr)-f(x).
    \]
    \label{lemma:separable-convex-superadditivity}
\end{lemma}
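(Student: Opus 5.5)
The plan is to reduce to a univariate inequality and then prove it by induction on $k$ from the two-term case, which is just convexity. Write $f(x)=\sum_{j\in[n]}f_j(x_j)$. Both sides of the claimed inequality split as sums over the coordinates $j\in[n]$, so it suffices to prove, for each $j$, the inequality
\[
\sum_{i\in[k]}\bigl(f_j(x_j+g^i_j)-f_j(x_j)\bigr)\le f_j\bigl(x_j+\textstyle\sum_{i\in[k]}g^i_j\bigr)-f_j(x_j).
\]
Here the reals $a_i:=g^i_j$ all have the same sign, because the $g^i$ lie in a common orthant.

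I will assume $f(x)$ is finite, since otherwise the differences are not meaningful; then every $f_j(x_j)$ is finite. If the right-hand side is $+\infty$ there is nothing to show. So fix a convex $\varphi\colon\R\to\R\cup\{\infty\}$ with $\varphi(x)$ finite, and reals $a_1,\dots,a_k$ of a common sign.

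The base case is $k=2$: I claim $\varphi(x+a)+\varphi(x+b)\le\varphi(x)+\varphi(x+a+b)$ whenever $ab\ge0$. If $a+b=0$ then $a=b=0$ and the claim is trivial. Otherwise set $\lambda=b/(a+b)\in[0,1]$. Then
\[
x+a=\lambda x+(1-\lambda)(x+a+b),\qquad x+b=(1-\lambda)x+\lambda(x+a+b).
\]
Applying convexity of $\varphi$ to each of these two points and adding the two resulting inequalities gives the claim. This remains valid with the value $\infty$ allowed, since then the right-hand side is $\infty$.

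For general $k$, I induct. Put $s=\sum_{i<k}a_i$, which has the same sign as $a_k$. By the inductive hypothesis,
\[
\sum_{i<k}\bigl(\varphi(x+a_i)-\varphi(x)\bigr)\le\varphi(x+s)-\varphi(x).
\]
The base case applied to $a=s$ and $b=a_k$ gives
\[
\varphi(x+s)-\varphi(x)+\varphi(x+a_k)-\varphi(x)\le\varphi(x+s+a_k)-\varphi(x).
\]
Chaining these two inequalities gives the statement for $k$. Summing over the coordinates $j$ then finishes the proof.

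The only delicate point is the bookkeeping of infinite values. It is handled by requiring $f(x)<\infty$ and noting that every intermediate inequality is trivially true once its right-hand side is infinite.
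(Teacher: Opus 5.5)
Your proof is correct. The paper gives no proof of this lemma: it imports it from the cited textbook (Lemma 3.3.1 there, with $\alpha_i=1$). So your argument is a self-contained replacement rather than a reconstruction of something in the paper. The coordinatewise reduction, the same-sign observation, the two-term case via $\lambda=b/(a+b)$, and the induction all check out; the cases $k\in\{0,1\}$ are trivial.

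A slightly more direct route, which is the usual proof of this inequality, avoids the induction. Fix a coordinate $j$, put $a_i=g^i_j$ and $S=\sum_i a_i$; if $S=0$, all $a_i$ vanish and there is nothing to show. Otherwise write $x_j+a_i=\bigl(1-\tfrac{a_i}{S}\bigr)x_j+\tfrac{a_i}{S}(x_j+S)$ with $\tfrac{a_i}{S}\in[0,1]$. Apply convexity of $f_j$ once for each $i$ and sum, using $\sum_i a_i/S=1$, to get $\sum_i f_j(x_j+a_i)\le(k-1)f_j(x_j)+f_j(x_j+S)$ in one step. Both arguments use only convexity along the segment from $x_j$ to $x_j+S$: yours builds it up from the two-point case, the direct one uses all the interior points at once.

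Your explicit requirement $f(x)<\infty$ is the right way to make the differences meaningful. It is satisfied where the paper applies the lemma, in the proof of \cref{proposition:idp-iff-convex-extensibility-along-line}. There the base point $x^A+\sum_{i\in S}y^i$ has objective value at most a convex combination of the finite values $f(x^A)$ and $f(x^B)$.
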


\begin{proof}[Proof of \cref{proposition:idp-iff-convex-extensibility-along-line}]
    The necessity of the IDP follows from an inductive argument on $k$ in the definition of the IDP. The base case $k=0$ is trivial. For the induction step, let $x\in\Z_{\ge0}^n$ and $b\in\Z^m$ be so that $Ax=kMb$. Define the separable convex indicator function $f$ by setting $f(y)=0$ if $\veczero\le y\le x$ and $f(y)=\infty$ otherwise. Now, $h(\veczero)=0$ because $A\cdot\veczero=M\cdot\veczero$. Combining this with $h(kb)=0$ and the assumed convex extensibility of $h$ along the line through $\veczero$ and $kb$ shows that $h(b)\le\tfrac{k-1}kh(\veczero)+\tfrac1kh(kb)=0$ and thus that there exists an $x^*\in\Z^n$ satisfying $\veczero\le x^*\le x$ and $Ax^*=Mb$. Now $A(x-x^*)=(k-1)Mb$ and the induction hypothesis completes the argument.

    With regards to the convex extensibility along any line, we need to show that for any $z^A,z^B\in\Z^m$ and $\lambda\in[0,1]$ so that $z^*=\lambda z^A+(1-\lambda)z^B\in\Z^m$, it holds that $h(z^*)\le\lambda h(z^A)+(1-\lambda)h(z^B)\in\Z^m$. For $z^*$ to be integral, it must hold that $\lambda(z^A-z^B)\in\Z^m$. Therefore, by excluding the trivial cases where $z^A=z^B$ or $\lambda\in\{0,1\}$, we find that $\lambda$ is equal to a rational number $\tfrac{q-p}q$ for coprime integers $p$ and $q$ satisfying $0<p<q$. The fact that $p$ and $q$ are coprime implies that $b:=\tfrac1q(z^A-z^B)\in\Z^m$ is integral. If $h(z^A)=\infty$ or $h(z^B)=\infty$ we are done. Furthermore, we may assume that $h(z^A),h(z^B)>-\infty$ because the unbounded case reduces to the bounded case as argued in the proof of \cref{lemma:rearrangement-to-convexity}. Let $x^A$ and $x^B$ be integral solutions to $Ax^A=r+Mz^A$, $Ax^B=r+Mz^B$ and minimizers attaining the objective value of the value function, i.e., $f(x^A)=h(z^A)$ and $f(x^B)=h(z^B)$. Let $y=x^B-x^A$, which satisfies $Ay=M(z^B-z^A)=qMb$. Therefore, we can apply \cref{observation:idp-closed-under-inversion} to obtain $y^i\in\Z^n$, $y^i\sqsubseteq y$, and $Ay^i=Mb$ for $i\in[q]$ so that $y=y^1+\dots+y^q$.
    
    Using \cref{lemma:separable-convex-superadditivity} and induction on increasing $p$, we show that for any numerator $p\in\Z$ with $0\le p<q$ there exists a cardinality $p$ subset $S\subseteq[q]$ of steps so that $f(x^A+\sum_{i\in S}y^i)\le\tfrac pqf(x^B)+\tfrac{q-p}qf(x^A)$. The base case, i.e., $p=0$, holds trivially as it reads $f(x^A)\le f(x^A)$. To execute the induction step and construct the set for numerator $p+1$, we apply \cref{lemma:separable-convex-superadditivity} to $x=x^A+\sum_{i\in S}y^i$ and steps $g^i=y^i$, $i\in[q]\setminus S$ where $S$ is the set for numerator $p$, to derive that
    \[
        \sum_{j\in[q]\setminus S}\Bigl(f\bigl(x^A+\sum_{i\in S}y^i+y^j\bigr)-f\bigl(x^A+\sum_{i\in S}y^i\bigr)\Bigr)\le f(x^B)-f\bigl(x^A+\sum_{i\in S}y^i\bigr).
    \]
    Here, we have used that $x^A+\sum_{i\in S}y^i+\sum_{i\in[q]\setminus S}y^i=x^B$. Multiplying both sides of the inequality by $\tfrac1{q-p}$ and adding $f(x^A+\sum_{i\in S}y^i)$ to both sides yields
    \[
        \tfrac1{q-p}\sum_{j\in[q]\setminus S}f\bigl(x^A+\sum_{i\in S}y^i+y^j\bigr)\le\tfrac1{q-p}f(x^B)+\tfrac{q-p-1}{q-p}f\bigl(x^A+\sum_{i\in S}y^i\bigr).
    \]
    An averaging argument now shows that there is an index $j$ so that
    \begin{align*}
        f\bigl(x^A+\sum_{i\in S}y^i+y^j\bigr)&\le\tfrac1{q-p}f(x^B)+\tfrac{q-p-1}{q-p}f\bigl(x^A+\sum_{i\in S}y^i\bigr)\\
        &\le\tfrac1{q-p}f(x^B)+\tfrac{q-p-1}{q-p}\bigl(\tfrac pqf(x^B)+\tfrac{q-p}qf(x^A)\bigr)\\
        &=\tfrac{p+1}qf(x^B)+\tfrac{q-(p+1)}qf(x^A).
    \end{align*}
    Therefore, $S\cup\{j\}$ satisfies the induction hypothesis for $p+1$, completing the induction step. Thus, for our original choice of $p$ so that $\tfrac{q-p}q=\lambda$ we find an integral solution $x^*=x^A+\sum_{i\in S}y^i$ to the system
    \begin{align*}
        Ax^*&=Ax^A+\sum_{i\in S}Ay^i=r+Mz^A+|S|\cdot Mb=r+Mz^A+\tfrac pqM(z^B-z^A)\\
        &=r+M\bigl(\lambda z^A+(1-\lambda)z^B\bigr)=r+Mz^*
    \end{align*}
    and we can conclude that
    \begin{align*}
        h(z^*)&\le f(x^*)\le\frac{q-p}qf(x^A)+\frac pqf(x^B)\\
        &=\lambda f(x^A)+(1-\lambda)f(x^B)=\lambda h(z^A)+(1-\lambda)h(z^B).\qedhere
    \end{align*}
\end{proof}

To make use out of \cref{lemma:idp-implies-midpoint-convexity}, we will now consider the values of $M$ that ensure that classes of constraint matrices have the ($d$-r)IDP after an $M$-dilation. We have already hinted at how to obtain such dilation for general constraint matrices $A\in\{-\Delta,-\Delta+1,\dots,\Delta\}^{m\times n}$. A combination of the Hadamard bound and Cramer's rule shows that each vertex of $\{x\in\R_{\ge0}^n:Ax=b\}$ has rational coordinates with denominators that are bounded by $(\sqrt m\Delta)^m$. Therefore, dilating such polyhedron by a factor of $\lcm\{1,2,\dots,\lfloor(\sqrt m\Delta)^m\rfloor\}$ yields an integral polyhedron. Here, we may bound $\lcm\{1,2,\dots,N\}=2^{\Theta(N)}$~\cite{Nair01021982}. Now an $n$-dilation of this integral polyhedron, yields the IDP~\cite{cox2024toric}. To obtain a dimension-independent bound, we exploit the fact that $A$ has at most $(2\Delta+1)^m$ distinct columns. It then suffices to show that the IDP of the bounded-dimension polyhedron defined by the matrix consisting of exactly these $(2\Delta+1)^m$ columns implies the IDP of $A$ after the right dilations. For the sake of completeness, we explicitly provide this straightforward argument in \cref{claim:standard-matrix}, where we use the same construction as in~\cite{DBLP:conf/ipco/LassotaL26}.

\begin{lemma}
    \label{lemma:general-scaled-idp}
    Let $m,\Delta\in\Z_{\ge0}$ be given. Then there exists a positive integer $M=2^{\O((\sqrt m\Delta)^m)}$ so that the class of matrices $A\in\{-\Delta,-\Delta+1,\dots,\Delta\}^{m\times n}$ has the IDP after an $M$-dilation.
\end{lemma}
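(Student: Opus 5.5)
The plan is to follow the outline given just before the statement. Set $N=(2\Delta+1)^m$ and let $\bar A\in\{-\Delta,\dots,\Delta\}^{m\times N}$ be the \emph{standard matrix} whose columns are all possible columns of this form.

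\textbf{Step 1: an $M$ that works for $\bar A$.} For every $b\in\Z^m$, each vertex of $\{w\in\R^N_{\ge0}:\bar Aw=b\}$ is a basic solution. By Cramer's rule and the Hadamard bound its coordinates have denominators dividing a basis determinant of absolute value at most $(\sqrt m\Delta)^m$. Hence $L:=\lcm\{1,\dots,\lfloor(\sqrt m\Delta)^m\rfloor\}=2^{\O((\sqrt m\Delta)^m)}$ makes $\{w\ge\veczero:\bar Aw=Lb\}$ an integral polyhedron, of dimension at most $N$. By Theorem 2.2.12 in~\cite{cox2024toric}, any $z$-dilation with $z\ge N-1$ of such a polyhedron has the IDP. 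Set $M:=L\cdot\max\{1,N-1\}$. Since $N-1\le(2\Delta+1)^m$ is absorbed by $2^{\O((\sqrt m\Delta)^m)}$ (for $\Delta\ge1$; the case $\Delta=0$ is trivial), we get $M=2^{\O((\sqrt m\Delta)^m)}$. Then $\{w\ge\veczero:\bar Aw=Mb\}$ has the IDP for all $b\in\Z^m$. Empty polyhedra and the case $m=0$ are trivial. Unboundedness causes no problem, since Theorem 2.2.12 applies to integral polyhedra in general (pointed here, as they lie in the orthant).

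\textbf{Step 2: transfer to an arbitrary $A$.} Let $A\in\{-\Delta,\dots,\Delta\}^{m\times n}$, $b\in\Z^m$, $k\ge0$, and $x\in\Z^n_{\ge0}$ with $Ax=kMb$. Let $\pi\colon[n]\to[N]$ map each column of $A$ to the identical column of $\bar A$, and aggregate by setting $w_j=\sum_{i\in\pi^{-1}(j)}x_i$. Then $w\in\Z^N_{\ge0}$ and $\bar Aw=kMb$. By Step 1, $w=w^1+\dots+w^k$ with $w^l\in\Z^N_{\ge0}$ and $\bar Aw^l=Mb$.

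Now disaggregate. For each $j$, the nonnegative integer $x$-entries on $\pi^{-1}(j)$ sum to $w_j=\sum_l w^l_j$. Split them greedily among the $l$: list the units of $x$ in $\pi^{-1}(j)$ in order and assign the first $w^1_j$ units to $x^1$, the next $w^2_j$ units to $x^2$, and so on. This produces $x^l\in\Z^n_{\ge0}$ with $\sum_l x^l=x$ and $\sum_{i\in\pi^{-1}(j)}x^l_i=w^l_j$. Therefore $Ax^l=\bar Aw^l=Mb$. This shows $\{x\ge\veczero:Ax=Mb\}$ has the IDP, which is the claimed property for the class.

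\textbf{Main obstacle.} The delicate point is Step 1: the integrality argument must apply uniformly over all $b$, and the dilation factors must be combined correctly. The product of an integralizing dilation $L$ and an IDP-inducing dilation $\ge N-1$ must remain valid for all right-hand sides simultaneously. This holds because the polyhedron for right-hand side $Mb$ is exactly the $(N-1)$-dilation of the integral polyhedron with right-hand side $Lb$ (adjusted by the factor $\max\{1,N-1\}$). The aggregation step itself is routine.
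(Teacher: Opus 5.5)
Your proposal is correct and follows essentially the same route as the paper. Both reduce to the standard matrix with all $(2\Delta+1)^m$ columns, integralize via $\lcm\{1,\dots,\lfloor(\sqrt m\Delta)^m\rfloor\}$ using Cramer's rule and the Hadamard bound, and then dilate by a factor on the order of the number of columns to obtain the IDP from Theorem 2.2.12 of Cox--Little--Schenck. The only difference is cosmetic: you transfer the IDP to $A$ in one aggregate/disaggregate step via $\pi$ (unused columns are handled since $w_j=0$ forces $w^l_j=0$), whereas the paper's claim adds one column or deletes one duplicate column at a time.
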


\begin{proof}
    \cref{claim:standard-matrix} shows that it suffices to consider a single matrix with a bounded number of columns.

    \begin{claimin}
        \label{claim:standard-matrix}
        Let $A\in\Z^{m\times n}$, $b\in\Z^m$ and let $\hat A\in\Z^{m\times\smash{\hat n}}$ be a matrix obtained by adding a column to $A$ or removing a duplicate column from $A$. If $\{\hat x\in\R_{\ge0}^{\smash{\hat n}}:\hat A\hat x=Mb\}$ has the IDP, then $\{x\in\R_{\ge0}^n:Ax=Mb\}$ has the IDP.
    \end{claimin}

    \begin{claimproof}
        Let $k\in\Z_{\ge0}$ be arbitrary. First, consider the case where a matrix $\hat A\in\Z^{m\times(n+1)}$ arises from a matrix $A\in\Z^{m\times n}$ by adding an $(n+1)$-th column to $A$. Let $x\in\Z_{\ge0}^n$ satisfy $Ax=kMb$. Then $\hat x$ obtained by padding $x$ with a zero in the $(n+1)$-th coordinate is a solution to $\hat A\hat x=kMb$. We obtain the decomposition $\hat x=\hat x^1+\dots+\hat x^k$ where $\hat A\hat x^i=Mb$ and $\hat x^i\in\Z_{\ge0}^{n+1}$ for $i\in[k]$. By letting $x^i\in\Z_{\ge0}^n$ be the projection of $\hat x^i$ on the first $n$ coordinates, we obtain the required decomposition $x=x^1+\dots+x^k$ with $Ax^i=Mb$.

        Second, consider the case where $A\in\Z^{m\times(n+1)}$ has identical $n$-th and $(n+1)$-th columns and $\hat A\in\Z^{m\times n}$ consists of the first $n$ columns of $A$. In this case, a solution $x\in\Z_{\ge0}^{n+1}$ to $Ax=kMb$ can be aggregated into a solution $\hat x\in\Z_{\ge0}^n$ to $\hat A\hat x=b$ by setting $\hat x_j=x_j$ for $j\in[n-1]$ and $\hat x_n=\hat x_n+\hat x_{n+1}$. We obtain the decomposition $\hat x=\hat x^1+\dots+\hat x^k$ where $\hat A\hat x^i=Mb$ and $\hat x^i\in\Z_{\ge0}^n$ for $i\in[k]$. Reverse the aggregation operation by copying $x_j^i=\hat x_j^i$ for $j\in[n-1]$ and distributing the last component $\hat x_n^i$ over $x_n^i$ and $x_{n+1}^i$ through
        \[
            [x^i_n;x^i_{n+1}]=
            \begin{cases}
                [\hat x_n^i;0],&\text{if $\sum_{j=1}^i\hat x_n^j\le x_n$,}\\[10pt]
                [x_n-\sum_{j=1}^{i-1}\hat x_n^j;\ \sum_{j=1}^i\hat x_n^j-x_n],&\text{if $\sum_{j=1}^{i-1}\hat x_n^j\le x_n<\sum_{j=1}^i\hat x_n^j$,}\\[10pt]
                [0;\hat x_n^i],&\text{if $x_n<\sum_{j=1}^{i-1}\hat x_n^j$.}
            \end{cases}
        \]
        By construction, the vectors $x^i\in\Z_{\ge0}^{n+1}$ are solutions to $Ax^i=Mb$. Therefore, we obtain the decomposition $x=x^1+\dots+x^k$.
    \end{claimproof}

    By applying \cref{claim:standard-matrix}, it suffices to prove \cref{lemma:general-scaled-idp} for the matrix $A$ with column set $\{-\Delta,-\Delta+1,\dots,\Delta\}^m$ which has $n=(2\Delta+1)^m$ columns. As argued earlier, for $N=\lcm\{1,2,\dots,\lfloor(\sqrt m\Delta)^m\rfloor\}\le2^{\O((\sqrt m\Delta)^m)}$, the dilated polyhedron $P=N\cdot\{x\in\R_{\ge0}^n:Ax=b\}$ is integral for any $b\in\Z^m$. Now an additional dilation with a factor of $n$ yields the IDP~\cite{cox2024toric}. Therefore, $M\le(2\Delta+1)^m\cdot2^{\O((\sqrt m\Delta)^m)}$ suffices.
\end{proof}

For our application to $n$-fold integer programs, we will need a bound on the dilation $M$ that establishes the $d$-rIDP for $n$-fold matrices, that is independent of the total number of constraints $m$. We note that it is known that many block-structured constraint matrices yield polyhedra with vertices of bounded fractionality~\cite{DBLP:conf/aaai/BrandKO21}. Unfortunately, stronger properties are needed as not all integral polyhedra have the IDP~\cite{cox2024toric}. An important property of $n$-fold matrices, or in more generality, matrices with bounded dual treedepth (see~\cite{DBLP:conf/icalp/EisenbrandHK18,DBLP:journals/toct/KnopPW20}), is that the granularity of the set of integral kernel vectors is small. This is captured by the concept of the Graver basis $\graverbasis(A)\subseteq\Z^n\setminus\{\veczero\}$ of a matrix $A\in\Z^{m\times n}$, which is the set of $\sqsubseteq$-minimal nonzero integral kernel elements. A fundamental property of the Graver basis is that any integral kernel element $x\in\Z^n$ can be written as the sum of conformal Graver basis elements, i.e., $x=g^1+\dots+g^\ell$ for $g^i\in\graverbasis(A)$, $g^i\sqsubseteq x$ for $i\in[\ell]$. We use $g_p(A)$ to refer to the maximum $\ell_p$-norm of any Graver basis element of $A$, which is referred to as the $\ell_p$-Graver complexity of $A$. It is known that the $\ell_1$-norm of a Graver basis element of an integral matrix $A\in\{-\Delta,-\Delta+1,\dots,\Delta\}^{m\times n}$ is bounded from above by $g_1(A)\le\O(m\Delta)^m$~\cite{DBLP:conf/icalp/EisenbrandHK18}. 

We use $\stacked(d,\Delta,U)$ to refer to the class of constraint matrices of the form
\begin{equation}
    \label{eq:block-structured-matrix}
    A=\begin{bmatrix}
        B\\
        D
    \end{bmatrix}
\end{equation}
for matrices $B\in\{-\Delta,-\Delta+1,\dots,\Delta\}^{d\times n}$ and $D\in\Z^{(m-d)\times n}$ such that $g_1(D)\le U$. For such matrices, we will show the following:

\begin{lemma}
    \label{lemma:stacked-scaled-ridp}
    Let $d,\Delta,U\in\Z_{\ge0}$ be given. Then there exists a positive integer $M=2^{\O(d\Delta U)^d}$ so that $\stacked(d,\Delta,U)$ has the $d$-rIDP after an $M$-dilation.
\end{lemma}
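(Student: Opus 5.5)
The plan is to reduce the $d$-rIDP of a stacked matrix $A=[B;D]$ to the plain IDP of a small auxiliary matrix with only $d$ rows. That auxiliary matrix is covered by \cref{lemma:general-scaled-idp}; the Graver basis of $D$ provides the reduction. Concretely, fix $A\in\stacked(d,\Delta,U)$, $b'\in\Z^d$, $k\in\Z_{\ge0}$ and $x\in\Z_{\ge0}^n$ with $Ax=kM[b';\veczero]$. We must write $x=x^1+\dots+x^k$ with $x^i\in\Z_{\ge0}^n$ and $Ax^i=M[b';\veczero]$. The case $k=0$ is trivial, so assume $k\ge1$.

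\textbf{Step 1 (Graver decomposition).} Since $Dx=\veczero$, $x$ is an integral kernel element of $D$. Write it as a conformal sum $x=g^1+\dots+g^\ell$ with $g^j\in\graverbasis(D)$ and $g^j\sqsubseteq x$. Conformality to $x\ge\veczero$ gives $g^j\in\Z_{\ge0}^n$. Each $g^j$ satisfies $Dg^j=\veczero$ and $\|g^j\|_1\le g_1(D)\le U$. Hence $Bg^j\in\{-\Delta U,\dots,\Delta U\}^d$, because every entry of $B$ is bounded by $\Delta$ in absolute value.

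\textbf{Step 2 (aggregation into types).} Let $V$ be the $d\times(2\Delta U+1)^d$ matrix whose columns are all vectors of $\{-\Delta U,\dots,\Delta U\}^d$. For a column $v$, let $c_v\in\Z_{\ge0}$ be the number of indices $j$ with $Bg^j=v$. Then
\[
Vc=\sum_j Bg^j=Bx=kMb'.
\]
Apply \cref{lemma:general-scaled-idp} with $m=d$ and entry bound $\Delta U$. It yields $M=2^{\O((\sqrt d\Delta U)^d)}\le 2^{\O(d\Delta U)^d}$ such that $\{c\in\R_{\ge0}^{N}:Vc=Mb'\}$ has the IDP. Since $c$ is an integral point of its $k$-dilation, we obtain $c=c^1+\dots+c^k$ with $c^i\in\Z_{\ge0}^N$ and $Vc^i=Mb'$. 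This choice of $M$ depends only on $d,\Delta,U$, as required.

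\textbf{Step 3 (disaggregation).} For each type $v$, the $c_v$ Graver elements of type $v$ are split among the $k$ parts, giving exactly $c^i_v$ of them to part $i$. This is possible because $\sum_i c^i_v=c_v$. Let $x^i$ be the sum of the Graver elements assigned to part $i$. Then:
\begin{itemize}
\item $x^i\in\Z_{\ge0}^n$, and $\sum_i x^i=x$;
\item $Dx^i=\veczero$;
\item $Bx^i=\sum_v c^i_v v=Vc^i=Mb'$.
\end{itemize}
Hence $Ax^i=M[b';\veczero]$, which is the required decomposition.

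The main point needing care is Step 1: the decomposition must use nonnegative Graver elements, which conformality supplies. With that, the bounded $\ell_1$-norm confines the $B$-images to a box of side $\O(\Delta U)$, independent of $n$ and $m$. Everything else is bookkeeping, and the bound on $M$ is inherited directly from \cref{lemma:general-scaled-idp}.
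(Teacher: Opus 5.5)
Your proof is correct, and after the shared first step it follows a different route from the paper's.

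Like you, the paper decomposes $x$ conformally into nonnegative Graver elements $y^i$ of $D$. It then proves \cref{claim:align-stacked}: the Steinitz lemma applied to $(By^i)$, plus pigeonholing on prefix sums at multiples of $p$, splits off a sub-solution with right-hand side $k'[b;\veczero]$, $0<k'<k$, whenever $k>\overline k=\O(d\Delta U)^d$. Repeating this yields pieces with right-hand sides $k^j[b;\veczero]$, where $k^j\le\overline k$ and $\sum_jk^j=kM$. These are regrouped into $k$ parts via the IDP of the one-row system $\sum_jk^jy_j=M$, i.e.\ \cref{lemma:general-scaled-idp} with $m=1$ and entry bound $\overline k$.

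You instead bucket the Graver elements by their image $Bg^j$, which takes only $(2\Delta U+1)^d$ values. You then apply \cref{lemma:general-scaled-idp} directly in dimension $d$ with entry bound $\Delta U$ to the count vector. Since elements of the same type are interchangeable, disaggregation is immediate. This is the duplicate-column reduction of \cref{claim:standard-matrix}, lifted from columns to Graver elements.

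Your route is shorter, needs no Steinitz lemma, and gives the slightly sharper $M=2^{\O((\sqrt d\Delta U)^d)}$, which is within the required $2^{\O(d\Delta U)^d}$.

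The paper's route buys extensibility. Its Steinitz--pigeonhole template is what carries over to \cref{proposition:n-fold-scaled-idp}, where the local right-hand sides are nonzero. There each part must take exactly one piece from every sequence $(y_i^{jle})_{j\in[k]}$ so that $D_i\hat x_i=|J|M'b_i$ holds, and the colorful Steinitz lemma enforces this synchronization. Counting pieces by their image would not: encoding the per-brick local constraints would need an aggregated system whose dimension grows with the number of bricks.
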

Note that if $D$ is a block diagonal matrix where each diagonal block $D_i$ has an $\ell_1$-Graver complexity of at most $U$, this bound also applies to $D$. This case corresponds to $n$-fold matrices, which are of interest because of our algorithmic application in \cref{theorem:n-fold-algorithm}. More specifically, let $\nfold(r,s,\Delta)$ be the class of matrices of the form (\ref{eq:n-fold-matrix}) for matrices $B_i\in\{-\Delta,-\Delta+1,\dots,\Delta\}^{r\times t_i}$, $D_i\in\{-\Delta,-\Delta+1,\dots,\Delta\}^{s\times t_i}$ for $i\in[n]$. The universal $\ell_1$-Graver complexity bound from~\cite{DBLP:conf/icalp/EisenbrandHK18} shows that $\nfold(r,s,\Delta)\subseteq\stacked(r,\Delta,\O(s\Delta)^s)$. Finally, note that \cref{lemma:stacked-scaled-ridp} also implies \cref{lemma:general-scaled-idp}, with a slightly worse bound of $2^{\O(m\Delta)^m}$, by choosing $A=B$ and $D$ to be the zero matrix.

We will prove \cref{lemma:stacked-scaled-ridp} by showing that if $M$ is large, the vector set consisting of $x_i$ times the $i$-th column of $B$ can be rearranged so that its prefix sums frequently hit the line from $\veczero$ to $kb$. This is visualized in \cref{fig:stacked-idp}, captured in \cref{claim:align-stacked} and is similar in flavor to the Graver complexity bound by Eisenbrand, Hunkenschröder, and Klein~\cite{DBLP:conf/icalp/EisenbrandHK18}. We similarly rely on the Steinitz lemma~\cite{grinbergsevastyanovsteinitz}. In particular, we will use the variant described in \cref{lemma:non-homogeneous-steinitz-lemma}. See e.g.~\cite{DBLP:journals/talg/EisenbrandW20}.

\begin{thmwithsinglecitation}{lemma}{\cite{grinbergsevastyanovsteinitz}}
    \label{lemma:non-homogeneous-steinitz-lemma}
    Let $\|\cdot\|$ be a norm in $\R^d$. Let $(x^i)_{i\in[m]}$ be a sequence of vectors with norm at most $L$ so that $\sum_{i\in[m]}x^i=x$. Then there exists a permutation $\pi\colon[m]\to[m]$ so that $\|\sum_{i\in[k]}x^{\pi(i)}-\frac kmx\|\le2dL$ for all $k\in[m]$.
\end{thmwithsinglecitation}

\begin{figure}
    \centering
    \includegraphics[width=0.8\linewidth]{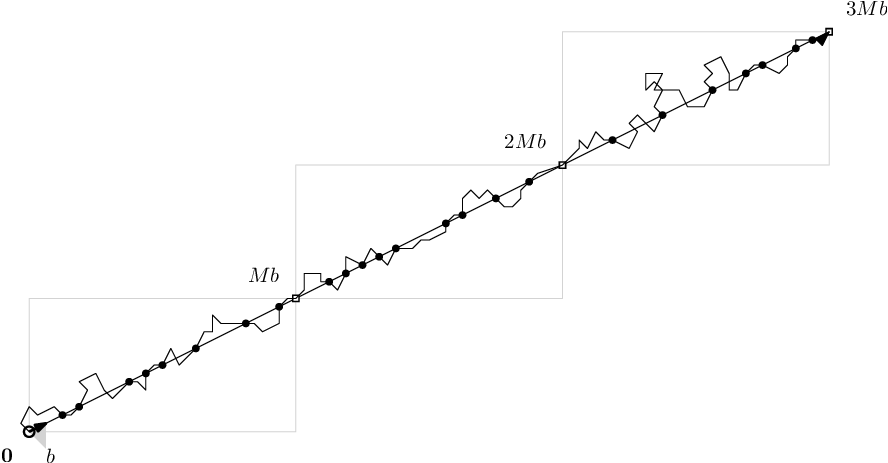}
    \caption{Sketch of the argument used in the proof of \cref{lemma:stacked-scaled-ridp}: partitioning a solution to $Ax=kb$ into solutions with right-hand sides being small integer multiples of $b$. These can be aggregated into solutions with right-hand side $Mb$.}
    \label{fig:stacked-idp}
\end{figure}

\begin{proofof}{lemma:stacked-scaled-ridp}
    Let $A$, $B$ and $D$ be as in (\ref{eq:block-structured-matrix}). Before proving \cref{claim:align-stacked}, we show how it implies \cref{lemma:stacked-scaled-ridp}.
    
    \begin{claimin}
        \label{claim:align-stacked}
        There exists an integer $\overline k=\O(d\Delta U)^d$ so that the following holds: if $x\in\Z_{\ge0}^n$, $b\in\Z^d$, and $k\in\Z_{\ge0}$ are such that $Ax=k[b;\veczero]$ and $k>\overline k$, then there exists an $\hat x\in\Z_{\ge0}^n$ and a positive integer $k'<k$ satisfying $\hat x\le x$ and $A\hat x=k'[b;\veczero]$.
    \end{claimin}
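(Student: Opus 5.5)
The plan is to decompose $x$ along the Graver basis of $D$, mix the resulting vectors with $k$ copies of $-b$, and reorder everything with \cref{lemma:non-homogeneous-steinitz-lemma} so that a pigeonhole argument produces a proper sub-sum whose right-hand side is an exact multiple of $b$.

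\textbf{Decomposition.} Since $Ax=k[b;\veczero]$, we have $Dx=\veczero$. The conformal Graver decomposition therefore writes $x=g^1+\dots+g^\ell$ with $g^j\in\graverbasis(D)$ and $g^j\sqsubseteq x$. Because $x\ge\veczero$, every $g^j$ is nonnegative, and $\|g^j\|_1\le U$. Set $v^j:=Bg^j\in\Z^d$. Then $\|v^j\|_\infty\le\Delta U$ and $\sum_j v^j=Bx=kb$.

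\textbf{Target.} It suffices to find a nonempty proper subset $J\subseteq[\ell]$ with $\sum_{j\in J}v^j=k'b$ for some integer $0<k'<k$. Taking $\hat x:=\sum_{j\in J}g^j$ then gives $\veczero\le\hat x\le x$ and $D\hat x=\veczero$, hence $A\hat x=k'[b;\veczero]$. Note that $\hat x\ne\veczero$ whenever $k'b\neq\veczero$; the case $b=\veczero$ is trivial, since then $k'=1$ and $\hat x=\veczero$ already work.

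\textbf{Steinitz step.} Consider the sequence of $\ell+k$ vectors consisting of $v^1,\dots,v^\ell$ together with $k$ copies of $-b$. Its sum is $\veczero$. Suppose first that $\|b\|_\infty\le\Delta U$. Then all vectors have $\ell_\infty$-norm at most $L:=\Delta U$, and \cref{lemma:non-homogeneous-steinitz-lemma} (with $x=\veczero$) gives an ordering whose prefix sums all lie in $\{y\in\Z^d:\|y\|_\infty\le2d\Delta U\}$. This set contains at most $(4d\Delta U+1)^d$ points.

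\textbf{Pigeonhole.} Look only at the $k$ prefix sums taken immediately after each copy of $-b$. If $k>\overline k:=(4d\Delta U+1)^d=\O(d\Delta U)^d$, two of these coincide. The block of the sequence strictly between them then sums to $\veczero$. It contains $k'$ copies of $-b$ with $1\le k'\le k-1$, together with some set $J$ of Graver vectors satisfying $\sum_{j\in J}v^j=k'b$. This is exactly the target.

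\textbf{Main obstacle.} The difficulty is the case $\|b\|_\infty>\Delta U$, where the copies of $-b$ are too long for a direct application of Steinitz. I would first try to break each $-b$ into pieces. Since $kb=\sum_j v^j$ with each $\|v^j\|_\infty\le\Delta U$, an ordering of the $v^j$ from \cref{lemma:non-homogeneous-steinitz-lemma} (now applied with $x=kb$) stays within $2d\Delta U$ of the segment from $\veczero$ to $kb$. I would then insert the $-b$ vectors one at a time, each directly after the prefix sum first passes the next lattice point $ib$ along the segment. This keeps all interleaved prefix sums in a bounded region around $\veczero$, with size governed by $\Delta U$ rather than $\|b\|$. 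The reason is that passing $ib$ costs at most about $2d\Delta U+\Delta U$ deviation plus one copy of $-b$, which exactly cancels the progress made along the segment.

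The same pigeonhole over the $k$ positions after each inserted $-b$ then applies with $\overline k=\O(d\Delta U)^d$. This needs the interleaved prefix sums to lie in a set of $\O(d\Delta U)^d$ lattice points. Verifying that bound on the bounded region carefully is where I expect the real work to be. Should it fail, the fallback is a norm that is anisotropic along $b$, chosen so that $b$ has norm at most $\Delta U$; the count of lattice points would then need to be bounded using integrality of the prefix sums.
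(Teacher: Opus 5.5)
Your decomposition, your reduction to finding $J$ with $\sum_{j\in J}v^j=k'b$, and your argument for $\|b\|_\infty\le\Delta U$ are all correct. The overall strategy (conformal Graver decomposition with respect to $D$, Steinitz on the vectors $Bg^j$, pigeonhole over $k$ checkpoints) is the paper's. The proof is not finished, however. The case $\|b\|_\infty>\Delta U$ is the general one, and there you leave the decisive bound unverified.

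Your intermediate claim is also false as stated. The interleaved prefix sums do not all stay in a region whose size depends only on $\Delta U$: just before the $i$-th copy of $-b$ is inserted, the running sum is close to $b$, so it is at distance about $\|b\|_\infty$ from $\veczero$. The pigeonhole only needs the $k$ sums taken directly after each inserted $-b$ to be bounded, and that is true. Knowing only that the prefix sums stay close to the segment is too weak to control ``first passes $ib$''. You need the pointwise form of \cref{lemma:non-homogeneous-steinitz-lemma}.

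Here is how to close it. Let $S_t:=\sum_{s\le t}v^{\pi(s)}$, so that $\|S_t-\tfrac t\ell kb\|_\infty\le2d\Delta U$ for all $t$. Insert the $i$-th copy of $-b$ after position $t_i:=\lceil i\ell/k\rceil$. Then $0\le\tfrac{t_ik}\ell-i<\tfrac k\ell$, and $k\|b\|_\infty=\|\sum_jv^j\|_\infty\le\ell\Delta U$. Hence
\[
\|S_{t_i}-ib\|_\infty\le2d\Delta U+\tfrac k\ell\|b\|_\infty\le(2d+1)\Delta U .
\]
Pigeonhole with $\overline k=(2(2d+1)\Delta U+1)^d=\O(d\Delta U)^d$ then finishes the proof. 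This needs no case distinction and no anisotropic norm.

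The paper avoids even the rounding. It pads the Graver decomposition with zero vectors until $\ell=pk$, so that at $t=pj$ the target $\tfrac t\ell kb=jb$ is exactly a lattice point. The deviations $S_{pj}-jb$ then lie in the radius-$2d\Delta U$ box, which has $(4d\Delta U+1)^d$ points. A collision $j_1<j_2$ yields $\hat x=\sum_{t=pj_1+1}^{pj_2}g^{\pi(t)}$ with $B\hat x=(j_2-j_1)b$.
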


    Let $M=2^{\O(\overline k)}=2^{\O(d\Delta U)^d}$ be the dilation factor obtained from \cref{lemma:general-scaled-idp} for $d=1,\Delta=\overline k$. Let $x\in\Z_{\ge0}$, $b\in\Z^d$, and $k\in\Z_{\ge0}$ be such that $Ax=kM[b;\veczero]=(kM)\cdot[b;\veczero]$. Applying \cref{claim:align-stacked} to decompose $x$ exhaustively, we find vectors $\hat x^1,\dots,\hat x^\ell\in\Z_{\ge0}^n$ so that $x=\hat x^1+\dots+\hat x^\ell$ and $A\hat x^j=k^j[b;\veczero]$ for some $k^j\in[\overline k]$ for $j\in[\ell]$ and $k^1+\dots+k^\ell=kM$. The IDP of the polyhedron $\{y\in\R_{\ge0}^\ell:k^1y^1+\dots+k^\ell y^\ell=M\}$ applied to the all-ones vector, which is in the $k$-dilation, now shows that these pieces $\{1,\dots,\ell\}$ can be partitioned into $k$ sets $J_l$ with $\sum_{j\in J_l}k^j=M$ for each $l\in[k]$. For this reason, we set $x^l=\sum_{j\in J_l}\hat x^j$ for $l\in[k]$. We verify that
    \[
        x^1+\dots+x^k=\sum_{l\in[k]}\sum_{j\in J_l}\hat x^j=\sum_{j\in[\ell]}\hat x^j=x
    \]
    and that
    \[
        Ax^l=\sum_{j\in J_l}A\hat x^j=\sum_{j\in J_l}k^j[b;\veczero]=M[b;\veczero]
    \]
    for $l\in[k]$, showing that $A$ indeed has the $d$-rIDP after an $M$-dilation. It is left to prove \cref{claim:align-stacked}.

    \begin{claimproof}[Proof of \cref{claim:align-stacked}]
        Since $Dx=\veczero$, we can decompose $x$ into the sum $x=y^1+\dots+y^\ell$ of Graver basis elements $y^i\in\graverbasis(D)\cap\Z_{\ge0}^n$ for $j\in[\ell]$. We can extend this sequence by adding additional zero vectors until $\ell=p\cdot k$ is an integer multiple of $k$. We can apply \cref{lemma:non-homogeneous-steinitz-lemma} to the sequence consisting of the vectors $(By^i)_{i\in[\ell]}$. Note that each sequence element has an $\ell_\infty$-norm of at most $L=\Delta U$ and that the total sum of the sequence is exactly $kb$. We find a permutation $\pi\colon[\ell]\to[\ell]$ so that for any $\overline i\in[\ell]$, we have that
        \[
            \Biggl\|\biggl(\sum_{i\in[\overline i]}By^{\pi(i)}\biggr)-\frac{\overline i}\ell\cdot kb\Biggr\|_\infty\le2d\cdot\Delta U.
        \]
        Note that $\tfrac p\ell\cdot kb$ is an integer vector and, therefore, if $\overline i$ is a multiple of $p$, the vector on the left-hand side lies within the radius $2d\Delta U$ discrete $\ell_\infty$-norm ball, which has $\overline k=(4d\Delta U+1)^d=\O(d\Delta U)^d$ elements. Since $k>\overline k$, the pigeonhole principle implies that there must be two indices $i_1,i_2$, that are multiples of $p$, $i_1=p\cdot j_1<i_2=p\cdot j_2$, such that $(\sum_{i\in[i_1]}By^{\pi(i)})-j_1b=(\sum_{i\in[i_2]}By^{\pi(i)})-j_2b$. Let $I=\{i_1+1,i_1+2,\dots,i_2\}$. Now, the vector $\hat x=\sum_{i\in I}y^{\pi(i)}$ is a suitable decomposition step: it is immediate that $\hat x\in\Z^n$ and $\veczero\le\hat x\le x$; and from the collision of the partial sums, we find that $B\hat x=\sum_{i\in I}By^{\pi(i)}=(j_2-j_1)b$, where $0<j_2-j_1<k$.\claimqedqedhere
    \end{claimproof}
    \let\qed\relax
\end{proofof}

In the case of $n$-fold constraint matrices, we can generalize the result of \cref{lemma:stacked-scaled-ridp} from the $d$-rIDP to the IDP over all $m$-dimensional right-hand sides at the cost of a larger value of $M$. Despite this generality not being needed in deriving \cref{theorem:n-fold-algorithm}, we believe that the result is of interest in the light of the open complexity of the $4$-block IP problem. The proof of \cref{proposition:n-fold-scaled-idp}, presented in \cref{appendix:n-fold-idp}, uses a similar strategy as the proof of \cref{lemma:stacked-scaled-ridp}, but requires heavier machinery to deal with the variable right-hand side vectors corresponding to the diagonal blocks $D_i$.

\begin{restatable}{proposition}{propositionnfoldscaledidp}
    \label{proposition:n-fold-scaled-idp}
    Let $r,s,\Delta\in\Z_{\ge0}$ be given. Then there exists a positive integer $M=2^{(2^{(s\Delta)^{\O(s)}}\cdot r)^r}$ so that the class of matrices $\nfold(r,s,\Delta)$ has the IDP after an $M$-dilation.
\end{restatable}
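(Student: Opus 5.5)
The plan mirrors the proof of \cref{lemma:stacked-scaled-ridp}: first establish a ``splitting'' claim, then aggregate the pieces using the IDP of a one-row system. Fix $A\in\nfold(r,s,\Delta)$ with blocks $B_i,D_i$, a right-hand side $b=[b_0;b_1;\dots;b_n]\in\Z^{r+sn}$, and $x\in\Z_{\ge0}^t$ with $Ax=k\cdot b$. The analogue of \cref{claim:align-stacked} I aim for is the following. There is a bound $\overline k=(2^{(s\Delta)^{\O(s)}}\cdot r)^{\O(r)}$ such that, whenever $k>\overline k$, there exist $\hat x\le x$ with $\hat x\in\Z^t_{\ge0}$ and an integer $0<k'<k$ satisfying $A\hat x=k'b$. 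Given this claim, exhaustive splitting writes $x=\hat x^1+\dots+\hat x^\ell$ with $A\hat x^j=k^jb$ and $k^j\in[\overline k]$. We then apply \cref{lemma:general-scaled-idp} with one row and coefficient bound $\overline k$ to the polyhedron $\{y\ge0:\sum_j k^jy_j=M\}$. This groups the pieces into $k$ parts with $\sum_j k^j=M$ each, which yields $M=2^{\O(\overline k)}$, matching the claimed bound.

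The difficulty is that, unlike the restricted case, the local right-hand sides $kb_i$ are nonzero. So $x_i$ is not a kernel element of $D_i$ and cannot be decomposed directly into Graver elements of $D_i$. The first step is to handle each block locally. For every $i$, apply the $s$-row case, i.e.\ \cref{lemma:general-scaled-idp} with $m=s$ and coefficient bound $\Delta$, in a form where the dilation is $N=2^{\O((\sqrt s\Delta)^s)}$. For the integer multiples of $N$ among the $k$, this decomposes $x_i$ into pieces $x_i=w_i^1+\dots+w_i^{k}$ with $D_iw_i^j=b_i$. Choosing these conformally, or combining them with Graver elements of $D_i$ to correct the imbalance, should give a decomposition of $x_i$ into a ``base'' part of $k/N$ local solutions of $D_iw=Nb_i$, plus kernel Graver elements of $D_i$ with $\ell_1$-norm at most $\O(s\Delta)^s$. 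Working modulo $N$, I would first reduce to $k$ divisible by a suitable $N$ by peeling off a bounded number of pieces; this costs only a factor in $\overline k$.

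The second step forms, for each block, a sequence of ``bricks'' that mixes the local solutions $w_i^j$ (each solving $D_iw=Nb_i$) with kernel Graver elements. I would regroup the bricks into vectors whose global image $B_i(\cdot)$ is bounded in norm relative to the average target, in a form to which the Steinitz lemma (\cref{lemma:non-homogeneous-steinitz-lemma}) applies. Concretely, I pair each block's $j$-th local solution into the $j$-th ``layer'' $L^j=(w_1^j,\dots,w_n^j)$, which satisfies $D_iL^j_i=Nb_i$ for all $i$. The layers are rebalanced using the kernel Graver elements, so that the differences $\sum_iB_iL^j_i-Nb_0/\,\cdot$ have bounded norm. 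I expect this rebalancing to be the main obstacle. The per-brick $B$-image is not a priori bounded, because the $w_i^j$ can be large, while only the differences between layers of the same block are controlled by Graver elements. The first thing to try is to bound $\|B_i(w_i^j-w_i^{j'})\|$ by choosing the $w_i^j$ to be as equal as possible, i.e.\ pairwise differing by sums of at most $2^{(s\Delta)^{\O(s)}}$ Graver elements via a proximity argument. This is presumably where the $2^{(s\Delta)^{\O(s)}}$ factor in place of $(s\Delta)^s$ originates.

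Once the layers differ from their mean by global vectors of $\ell_\infty$-norm at most $L=2^{(s\Delta)^{\O(s)}}\Delta$, the last step repeats the pigeonhole argument. Applying Steinitz in $\R^r$ to the deviations, the prefix sums at layer boundaries lie in a ball with $(4rL+1)^r$ points. Two equal prefix deviations then give a consecutive set of layers $\hat x$ with $A\hat x=k'b$ for $0<k'<k$, which proves the claim with $\overline k=\O(rL)^r$.
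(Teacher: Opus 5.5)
Your proposal has a genuine gap in the second step, and you flagged that step yourself as the main obstacle.

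\textbf{The layer bound cannot come from per-block balancing.} Suppose each block's $K=k/N$ local solutions could be chosen with $\|B_i(w_i^j-\bar w_i)\|_\infty\le c$ for $c=2^{(s\Delta)^{\O(s)}}\Delta$. You leave this unproven, citing only a ``proximity argument''. Even then, the deviation of layer $j$ is $\sum_{i\in[n]}B_i(w_i^j-\bar w_i)$. This is a sum of $n$ such terms, so it is only bounded by $nc$. With the ordinary Steinitz lemma applied to whole layers, the pigeonhole ball, and hence $\overline k$, would depend on $n$, which defeats the statement.

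No rebalancing inside a single block can fix this. You must coordinate, across blocks, which piece goes into which layer. The missing tool is the colorful Steinitz lemma (\cref{lemma:colorful-steinitz-lemma}, used via \cref{corollary:non-homogeneous-colorful-steinitz-lemma}). Each color is a sequence of $k$ pieces indexed by the layer, and each color is permuted independently. The prefix sums over layers then stay within $2(4r-2)L$ of the target line, independently of $n$. Every layer still receives exactly one piece of every color, so local feasibility of any contiguous set of layers is preserved.

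\textbf{The pieces are not bounded.} The local right-hand sides $Nb_i$ are arbitrary, so the $w_i^j$ and their images $B_iw_i^j$ are unbounded. The paper does not try to balance whole local solutions. Instead it cuts every local solution into bounded bricks, uniformly over $j$:
\begin{itemize}
\item \cref{lemma:faithful-decomposition} splits $M'b_i=d_i^1+\dots+d_i^{\ell_i}$ with $\|d_i^l\|_\infty\le\Xi=2^{(s\Delta)^{\O(s)}}$, such that every solution of $D_iy=M'b_i$ splits accordingly.
\item \cref{lemma:solution-decomposition} then splits each resulting piece into a base of $\ell_1$-norm at most $\eta$ plus nonnegative Graver elements of $D_i$.
\end{itemize}
The colors are indexed by $(i,l,e)$. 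For fixed $(i,l)$, all base bricks share the right-hand side $d_i^l$, and the Graver bricks lie in $\ker D_i$. Hence any contiguous set $J$ of layers gives $D_i\hat x_i=|J|M'b_i$. The factor $2^{(s\Delta)^{\O(s)}}$ comes from $\Xi$, not from a proximity argument.

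A variant of your route might work: balance whole local solutions per block, subtract each block's mean, and apply colorful Steinitz with colors $i$. However, that requires proving the balanced decomposition, which is nontrivial and not supplied.

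\textbf{A smaller issue: divisibility by $N$.} Your claim is stated for arbitrary multipliers $k,k'$. The pieces produced by exhaustive splitting need not be multiples of $N$. ``Peeling off a bounded number of pieces'' to restore divisibility is itself a splitting statement of the same kind, so the argument is circular there. The paper avoids this by stating the claim for $Ax=kM'b$ and producing $A\hat x=k'M'b$. All pieces then remain multiples of $M'b$, and the final dilation is $2^{\O(\overline k)}\cdot M'$.
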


%% file: sections/periodic-convexity.tex
\subsection{Combining Midpoint Convexity, Bounded Graver-Complexity and Low-Dimensional Interaction}
\label{sec:periodic-convexity}

In this section, we first exploit Graver complexity bounds and the bounded $d$-dimensional space of the variable right-hand sides to modify pairs $Ax^1=r+Mz^1$, $Ax^2=r+Mz^2$ if the distance between $z^1$ and $z^2$ is large. By producing modified $\hat x^1,\hat x^2$ with associated $\hat z^1,\hat z^2$ that are closer together, we can effectively assume that the points $z^i$ in (\ref{eq:0-centered-convexity}) lie inside of a bounded box. We will later exploit this to effectively bound the number of points $k$ in (\ref{eq:0-centered-convexity}), after which we can apply the operation of \cref{lemma:idp-implies-midpoint-convexity} a bounded number of times to obtain the periodic convexity of value functions.

We obtain the pairwise modification by inspecting the difference between two solutions $x^1,x^2$ and their corresponding right-hand sides $r+Mz^1,r+Mz^2$. If $Mz^2-Mz^1$ is large, a single integer vector of bounded norm appears as a projection of a Graver basis element at least $M$ times. This yields a way to exchange a part of the solutions and decrease their right-hand side distance while simultaneously remaining on the grid $r+M\Z^m$. This is formalized in \cref{claim:closing}. In \cref{lemma:reduction-to-bounded-box}, $\matzero$ stands for the zero matrix of appropriate dimension. We note that our use of a pairwise operation to move points inside a bounded box is similar in spirit to the proof of Lemma 10 in~\cite{DBLP:journals/mp/KnopKLMO23}, but differs in that we maintain variable right-hand sides on a given grid.

\begin{lemma}
    \label{lemma:reduction-to-bounded-box}
    Let $M,d,G\in\Z_{\ge0}$ and let $\mathcal A$ be a class of constraint matrices such that for any $A\in\mathcal A$, $A\in\Z^{m\times n}$ the augmented matrix
    \[
        A'=\begin{bmatrix}
            \multirow{2}{*}{A}&I\\
            &\matzero
        \end{bmatrix}\in\Z^{m\times(n+d)}
    \]
    satisfies $g_\infty(A')\le G$. Then there exists a $B=\O(G)^{d+1}$ so that the following holds: let $A\in\mathcal A$, $A\in\Z^{m\times n}$, $r\in\Z^m$, $x^1,\dots,x^k\in\Z^n$, and $z^1,\dots,z^k\in\Z^d$ be such that $z^1+\dots+z^k=\veczero$ and $Ax^i=r+M[z^i;\veczero]$ for $i\in[k]$, then there exist $\hat x^1,\dots,\hat x^k\in\Z^n$ and $\hat z^1,\dots,\hat z^k\in\Z^d$ so that $A\hat x^i=r+M[\hat z^i;\veczero]$ with $\|\hat z^i\|_\infty\le B$ for $i\in[k]$ and $[\hat x^1,\dots,\hat x^k]\rowmajorizedby[x^1,\dots,x^k]$.
\end{lemma}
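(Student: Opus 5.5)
The plan is to define a pairwise \emph{closing} operation and apply it repeatedly with a potential-function argument. The operation (the \cref{claim:closing} referred to in the text) is as follows. Suppose $Ax^1=r+M[z^1;\veczero]$, $Ax^2=r+M[z^2;\veczero]$ and $\|z^2-z^1\|_\infty$ exceeds a threshold $B_0=\O(G)^{d+1}$. Then there is $y\in\Z^n$ with $y\sqsubseteq x^2-x^1$ and $Ay=M[w;\veczero]$ for some nonzero $w\in\Z^d$ satisfying $w\sqsubseteq z^2-z^1$ (coordinatewise, same signs and smaller absolute values). Setting $\hat x^1=x^1+y$ and $\hat x^2=x^2-y$ gives $[\hat x^1,\hat x^2]\rowmajorizedby[x^1,x^2]$ by conformality. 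The new phases are $\hat z^1=z^1+w$ and $\hat z^2=z^2-w$, so the sum is preserved and both points stay on the grid $r+M\Z^m$.

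To find $y$, I consider the vector $(x^2-x^1,\,-M(z^2-z^1))$, which lies in the kernel of $A'$. I decompose it conformally into Graver elements $(g^j,u^j)\in\graverbasis(A')$, $j\in[\ell]$, each with $\|u^j\|_\infty\le G$. Since the $u^j$ are conformal to $-M(z^2-z^1)$ and sum to it, their number satisfies $\ell\ge M\|z^2-z^1\|_\infty/G$. The nonzero $u^j$ take at most $(2G+1)^d$ distinct values. Hence if $\|z^2-z^1\|_\infty>(2G+1)^d G$, some single value $u$ occurs at least $M$ times. Taking exactly $M$ of these Graver elements and summing them gives $y=\sum g^j$ with $Ay=-Mu$. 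I set $w=-u$. Conformality of $y$ to $x^2-x^1$ and of $w$ to $z^2-z^1$ follows from the conformal decomposition. So the threshold is $B_0=(2G+1)^dG=\O(G)^{d+1}$.

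For the global argument, I use the potential $\Phi=\sum_i\|z^i\|_2^2$. When $z^1$ and $z^2$ are moved towards each other by a $w$ conformal to their difference, $\Phi$ strictly decreases: the change is $2\langle w,z^1-z^2\rangle+2\|w\|^2$, and $\langle w,z^2-z^1\rangle\ge\|w\|^2$ holds coordinatewise because $|w_t|\le|z^2_t-z^1_t|$ with equal signs. Since the change is an integer, it is at most $-1$, so $\Phi$ drops by at least $1$ each step. Because $\Phi$ is a nonnegative integer, the process terminates. The row-majorization relation is transitive, and each pairwise step is a majorization of the full matrix, as the other columns are unchanged.

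At termination, all pairwise differences satisfy $\|z^i-z^j\|_\infty\le B_0$. Combined with $\sum_i z^i=\veczero$, this makes each $z^i$ equal to the average difference $\tfrac1k\sum_j(z^i-z^j)$, so $\|z^i\|_\infty\le B_0=:B$.

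The main obstacle is the pigeonhole step: ensuring a single projection value repeats exactly $M$ times while the resulting exchange stays conformal in both the $x$ and $z$ coordinates. This is why the identity block $I$ is appended to $A'$, with $g_\infty$ rather than $g_1$ controlling the count.
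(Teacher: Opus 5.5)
Your strategy is the paper's: a pairwise closing step built from a conformal Graver decomposition of $(x^2-x^1,-M(z^2-z^1))\in\ker A'$, a pigeonhole on the projections $u^j$ to find one value repeated $M$ times, and a potential argument. The gap is in termination. The inequality $\langle w,z^2-z^1\rangle\ge\|w\|_2^2$ only shows that the change $2\sum_t|w_t|\bigl(|w_t|-|z^2_t-z^1_t|\bigr)$ of $\Phi$ is $\le0$. Equality holds whenever $|w_t|=|z^2_t-z^1_t|$ on the whole support of $w$, i.e.\ when the step merely swaps some coordinates of $z^1$ and $z^2$. Your pigeonhole over \emph{all} nonzero values does not rule this out.

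For instance, take $A=I_2$ (so $A'=[I_2,I_2]$, $G=1$, $B_0=9$), $M=1$, $r=\veczero$, $x^i=z^i$, and $z^1=(-5,0)$, $z^2=(5,1)$, $z^3=(0,-1)$. The pair $(1,2)$ has $\|z^2-z^1\|_\infty=10>B_0$. The decomposition consists of ten copies of $(e_1,-e_1)$ and one copy of $(e_2,-e_2)$. The choice $u=-e_2$ is admissible and gives $\hat z^1=(-5,1)$, $\hat z^2=(5,0)$. Then $\Phi$ stays at $52$, and the symmetric choice in the next round swaps back, so the procedure can cycle.

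The missing ingredient is exactly the extra condition $g_{j^*}\neq0$ that the paper imposes in \cref{claim:closing}. Fix a coordinate $t^*$ with $|z^2_{t^*}-z^1_{t^*}|>B_0$ and run the pigeonhole only over the Graver elements with $u^j_{t^*}\neq0$. There are at least $M|z^2_{t^*}-z^1_{t^*}|/G>M(2G+1)^d$ of them and fewer than $(2G+1)^d$ possible values. Hence some $u$ with $u_{t^*}\neq0$ occurs at least $M$ times. Then $0<|w_{t^*}|\le G<|z^2_{t^*}-z^1_{t^*}|$, and $\Phi$ drops by at least $2$.

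With this repair your proof is correct and differs from the paper's only in bookkeeping. You use $\sum_i\|z^i\|_2^2$, may close any pair with $\|z^i-z^j\|_\infty>B_0$, and finish via $z^i=\tfrac1k\sum_j(z^i-z^j)$. The paper instead uses $\sum_i\|z^i\|_1$ and always pairs a point with $|z^1_{j^*}|>B$ against one of opposite sign in coordinate $j^*$, so it stops as soon as every $\|z^i\|_\infty\le B$.
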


\begin{proof}
    Let $F=\{g\in\Z^d:\|g\|_\infty\le G\}$ and choose $B=|F|\cdot G=\O(G)^{d+1}$. We first show how to close the gap between two $z^i$-s in the combination assuming they are further apart than $B$ in the $\ell_\infty$-norm.
    
    \begin{claimin}
        \label{claim:closing}
        Let $r'\in\Z^m$, $x^A,x^B\in\Z^n, z\in\Z^d$, and $j^*\in[d]$ be such that $Ax^A=r'$, $Ax^B=r'+M[z;\veczero]$ and $|z_{j^*}|>B$. Then there exist $\hat x^A,\hat x^B\in\Z^n$ and a step $p\in\Z^d$ so that $p\sqsubseteq z$, $0<|p_{j^*}|<|z_{j^*}|$, $A\hat x^A=r'+M[p;\veczero]$, $A\hat x^B=r'+M[z-p;\veczero]$, and $[\hat x^ A,\hat x^B]\rowmajorizedby[x^A,x^B]$.
    \end{claimin}

    \begin{claimproof}
        Let $y=x^B-x^A$ and note that $[y;-Mz]$ is in the integer kernel of the composite matrix $A'$. Therefore, it can be decomposed into the sum of Graver basis elements $[y;-Mz]=[y^1;w^1]+\dots+[y^k;w^k]$ with $y^i\sqsubseteq y$, $y\in\Z^n$, $w^i\sqsubseteq-Mz$, $Ay^i=[w^i;\veczero]$, and $w^i\in F$ for $i\in[k]$. We group the decomposed elements by their projection on the last $d$ coordinates $w^i$ and define $I_g=\{i\in[k]:w^i=g\}$. If there does not exist a $g\in F$ with $|I_g|\ge M$ and $g_{j^*}\ne0$, then we have that
        \[
            |z_{j^*}|=\Bigl|\tfrac1M\sum_{g\in F}|I_g|\cdot g_{j^*}\Bigr|\le\tfrac1M\sum_{g\in F:g_{j^*}\ne0}|I_g|\cdot|g_{j^*}|\le\tfrac1M\cdot|F|\cdot M\cdot G=B,
        \]
        which contradicts $|z_{j^*}|>B$. Therefore, we can pick an arbitrary size $M$ subset $I_g'$ of a set $I_g$ for some $g\in F$ for which $g_{j^*}\ne0$. We define the step $s=\sum_{i\in I_g'}y^i$ with corresponding right-hand side modification $p=-g$. It holds that $Mg\sqsubseteq-Mz$, which implies $p=-g\sqsubseteq z$. Furthermore, $0<|p_j^*|=|g_j^*|\le G\le B<|z_{j^*}|$. Define $\hat x^A=x^A+s$ and $\hat x^B=x^B-s$. The sign-compatibility of the steps $y^i$ with $x^B-x^A$ shows that $[\hat x^A,\hat x^B]\rowmajorizedby[x^ A,x^B]$. Moreover, observe that
        \[
            A\hat x^A=Ax^A+As=r'+\sum_{i\in I_g'}Ay^i=r'+\sum_{i\in I_g'}[-w^i;\veczero]=r'+M[-g;\veczero]=r'+M[p;\veczero]
        \]
        and similarly that $A\hat x^B=r'+M[z-p;\veczero]$.
    \end{claimproof}

    We prove \cref{lemma:reduction-to-bounded-box} by induction on the potential $\sum_{i\in[k]}\|z^i\|_1$. Note that if the potential is at most $B$, $\hat z^i=z^i$ satisfies the conditions of the lemma. Hence, we may focus on the induction step. If there is no $j^*\in[d]$ and $i\in[k]$ with $|z_{j^*}^i|>B$, we are done. Otherwise, w.l.o.g., assume that $z_{j^*}^1<-B$ (the other sign is treated analogously). Since $z^1+\dots+z^k=\veczero$, there must exist another point $z^{i'}$, $i'\in[k]$ with $z_{j^*}^{i'}>0$. W.l.o.g., this point is $z^2$. We now apply \cref{claim:closing} with $r'=r+M[z^1;\veczero]$ and $z=z^2-z^1$, which satisfies $z_{j*}\ge B+2>B$, to find an integral step $p$ conformal to $z^2-z^1$ with $0<p_{j^*}<z_{j^*}$. Set $\hat z^1=z^1+p$ and $\hat z^2=z^2-p$. \cref{claim:closing} additionally provides updated solutions $\hat x^1,\hat x^2\in\Z^t$ that satisfy $[\hat x^1,\hat x^2]\rowmajorizedby[x^1,x^2]$ and $A\hat x^1=r'+M[p;\veczero]=r+M[z^1;\veczero]+M[p;\veczero]=r+M[\hat z^1;\veczero]$, $A\hat x^2=r'+M[z-p;\veczero]=r+M[\hat z^2;\veczero]$. Now, consider the collections $\hat x^1,\hat x^2,x^3,\dots,x^k$ and $\hat z^1,\hat z^2,z^3,\dots,z^k$. We have that $[\hat x^1,\hat x^2,x^3,\dots,x^k]\rowmajorizedby[x^1,\dots,x^k]$ and $\hat z^1+\hat z^2+z^3+\dots+z^k=\veczero$. We now inspect the change $\|\hat z^1\|_1+\|\hat z^2\|_1-\|z^1\|_1-\|z^2\|_1$ in the potential. The contribution of the $j$-th coordinate to this difference is exactly $|z_j^1+p_j|+|z_j^2-p_j|-|z_j^1|-|z_j^2|$. The conformality of $p_j$ with $z_j^2-z_j^1$ shows that this is nonnegative. Furthermore, for $j=j^*$, this difference is strictly negative because $0<p_j^*<z_{j^*}^2-z_{j^*}^1$ and $z_{j^*}^1<0<z_{j^*}^2$. Therefore, the induction hypothesis applies and shows how to further modify the solutions and right-hand side vectors $z^i$ so that $\|z^i\|_\infty\le B$.
\end{proof}

We can now use \cref{lemma:reduction-to-bounded-box} to move all $z^i$ within a bounded box. In this situation, any multiset of points summing to zero must contain bounded submultisets satisfying the same property. See \cref{fig:submultisets}. We will treat these submultisets in isolation, which effectively allows us to assume that the number $k$ in (\ref{eq:0-centered-convexity}) is bounded. In this way, we can apply \cref{lemma:idp-implies-midpoint-convexity} a bounded number of $\ell$ times to obtain solutions to $Ax=r$. Here, we ensure that the we meet the conditions of \cref{lemma:idp-implies-midpoint-convexity} by applying a $2^\ell$-dilation to the grid defined by the dilation factor that yields the IDP. This is visualized in \cref{fig:averaging}. In \cref{lemma:vector-rearrangement-with-insertion}, we introduce dummy points in order to assume that $k$ is exactly a power of $2$. We resolve the technicalities that these introduce and finish the last details of the proof of the periodic convexity of value functions in \cref{lemma:rearrangement-to-convexity}.

\begin{figure}
    \begin{subfigure}[t]{0.45\textwidth}
        \centering
        \includegraphics[width=\textwidth]{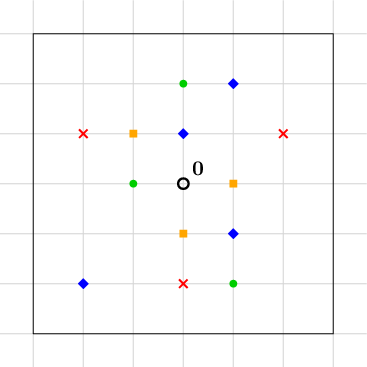}
        \caption{The multiset of bounded points with zero sum can be partitioned into submultisets of bounded cardinality with zero sum.}
        \label{fig:submultisets}
    \end{subfigure}
    \hspace{0.05\textwidth}
    \begin{subfigure}[t]{0.45\textwidth}
        \centering
        \includegraphics[width=\textwidth]{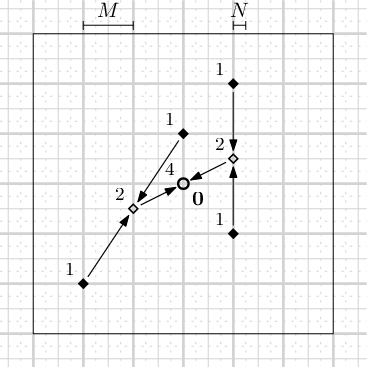}
        \caption{The grid is dilated so that we can apply midpoint convexity and take pairwise averages of the right-hand side vectors until $z=\veczero$. The number of solutions per right-hand side vector is shown on the top left of the points.}
        \label{fig:averaging}
    \end{subfigure}
    \caption{Visualization of the $z^i$-s in the two steps in the proof of \cref{lemma:vector-rearrangement-with-insertion}.}
    \label{fig:vector-rearrangement-with-insertion}
\end{figure}

\begin{lemma}
    \label{lemma:vector-rearrangement-with-insertion}
    Let $d,G\in\Z_{\ge0}$ and let $\mathcal A$ be a class of constraint matrices that is closed under inverting columns, that has the $d$-rIDP after an $N$-dilation, and assume that for any $A\in\mathcal A$, $A\in\Z^{m\times n}$ the augmented matrix
    \[
        A'=\begin{bmatrix}
            \multirow{2}{*}{A}&I\\
            &\matzero
        \end{bmatrix}\in\Z^{m\times(n+d)}
    \]
    satisfies $g_\infty(A')\le G$. Then there exists an integer $M=\O(G)^{d(d+1)}\cdot N$ so that the following holds: let $A\in\mathcal A$, $A\in\Z^{m\times n}$, $r\in\Z^m$, $x^1,\dots,x^k\in\Z^n$, and $z^1,\dots,z^k\in\Z^d$ be such that $z^1+\dots+z^k=\veczero$ and $Ax^i=r+M[z^i;\veczero]$ for all $i\in[k]$. Let $x^*\in\Z^n$ be so that $Ax^*=r$ and denote $x^i=x^*$ for $i>k$. Then there exists an integer $\overline k\ge k$ and points $\hat x^1,\dots,\hat x^{\overline k}\in\Z^n$ so that $A\hat x^1=\dots=A\hat x^{\overline k}=r$ and $[\hat x^1,\dots,x^{\overline k}]\rowmajorizedby[x^1,\dots,x^{\overline k}]$.
\end{lemma}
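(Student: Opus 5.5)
We will follow the two-step strategy sketched in \cref{fig:vector-rearrangement-with-insertion}. Write $M=2^\ell\cdot N'$, where $N'$ is a multiple of $N$ to be fixed and $2^\ell$ is a power of two also to be fixed, both depending only on $d$ and $G$.

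\emph{Step 1: move into a bounded box.} We apply \cref{lemma:reduction-to-bounded-box} with period $M$ to $x^1,\dots,x^k$ and $z^1,\dots,z^k$. This yields new solutions, still row-majorized by the originals, with right-hand side offsets $\hat z^i$ satisfying $\|\hat z^i\|_\infty\le B=\O(G)^{d+1}$. Since only pairwise exchanges are performed, the $\hat z^i$ still sum to $\veczero$.

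\emph{Step 2: split into bounded zero-sum groups.} We use the standard Steinitz-type fact that any zero-sum sequence of vectors in the $\ell_\infty$-ball of radius $B$ in $\Z^d$ can be partitioned into zero-sum subsequences of length at most $K$, with $K$ depending only on $d$ and $B$. To see this, order the sequence by \cref{lemma:non-homogeneous-steinitz-lemma} so that all prefix sums lie in a ball of radius $2dB$. If the sequence is longer than the $(4dB+1)^d$ lattice points of that ball, two prefix sums collide, and the segment between them is a zero-sum subsequence. Iterating this gives $K\le(4dB+1)^d=\O(G)^{d(d+1)}$.

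\emph{Step 3: average within each group.} Fix a group with $j\le K$ members and let $\ell=\lceil\log_2K\rceil$. Pad the group with $2^\ell-j$ copies of $x^*$, which have offset $\veczero$. Padding copies drawn from indices beyond $k$ are exactly the dummy points $x^i=x^*$, $i>k$, allowed in the statement, so $\overline k$ equals $k$ plus the total padding. We now perform $\ell$ rounds of pairing. In each round the offsets within a group are divisible by the current factor $2^{\ell-t}$, so every offset has the form $2^{\ell-t}w$ for an integral $w$.

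\begin{itemize}
\item Pair up the points arbitrarily. For a pair with offsets $2^{\ell-t}w^A$ and $2^{\ell-t}w^B$, set the common phase $r''=r+M'[(w^A+w^B)/2\cdot\ldots]$. More cleanly, apply \cref{lemma:idp-implies-midpoint-convexity} with period $N\cdot 2^{\ell-t-1}$ and phase equal to the midpoint, which is a lattice point because the period is halved.
\item The lemma applies because having the $d$-rIDP after an $N$-dilation implies having it after any multiple of $N$. Indeed, $N\cdot c$-dilated polyhedra inherit the IDP, since $k\cdot cN$-dilations decompose into $kc$ pieces of size $N$, which can then be grouped $c$ at a time.
\item Each application replaces both points by points with the midpoint offset and preserves row-majorization.
\end{itemize}

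It is cleaner to run the rounds as recursive averaging. At each round the offsets are replaced by pairwise averages, and after $\ell$ rounds all offsets equal the group mean times $2^\ell$. The group mean is $\veczero$ because the original group had zero sum and the padding contributes $\veczero$.

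\emph{Main obstacle.} The key technical point is keeping the pairwise midpoints integral, and this is exactly why we dilate by $2^\ell$. We therefore take $M=2^\ell N$, which gives $M=\O(K)\cdot N=\O(G)^{d(d+1)}N$.

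The delicate part is that a generic pairing does not give the right midpoint structure. We therefore use a balanced binary-tree scheme: at level $t$, merge two subtrees each of whose leaves already share a common offset. The offset at a node is then $2^{\ell}/2^{h}$ times the sum of the original $z$-values below it, divided appropriately. Precisely, after merging a subtree with $2^h$ leaves, all its leaves carry offset $M\cdot(\text{sum of its }z)/2^h$ in units of the base grid $N$. This is integral since $M/N=2^\ell$.

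Merging two subtrees with $2^h$ leaves each works as follows. Pair their leaves one-to-one and apply \cref{lemma:idp-implies-midpoint-convexity} to each pair with phase equal to the new common offset. The two old offsets differ from the new one by $\pm N\cdot 2^{\ell-h-1}\cdot(\text{integer})$, which is a multiple of $N$. Row-majorization composes across all these exchanges, and at the root every leaf has offset $\veczero$, so $A\hat x^i=r$ for every $i$.
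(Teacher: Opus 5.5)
Your proposal is correct and follows the paper's proof essentially step for step. Both arguments reduce to the box of radius $B=\O(G)^{d+1}$ via \cref{lemma:reduction-to-bounded-box}, split into zero-sum groups of size $\O(dB)^d$, pad each group with copies of $x^*$ to exactly $2^\ell$ members, and merge equal-size blocks in a balanced binary tree using \cref{lemma:idp-implies-midpoint-convexity} with dilation $N$ and $M=2^\ell N$. Your initial round-based bullet with arbitrary pairings should simply be dropped, as you note yourself.

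The only difference is cosmetic: you get the bounded zero-sum groups directly from the Steinitz lemma plus pigeonhole, whereas the paper decomposes the all-ones vector into conformal $0/1$ Graver elements of $[\tilde z^1,\dots,\tilde z^k]$. The paper's $\ell_1$-bound on those Graver elements is proved by exactly your Steinitz argument. For your version to give short segments, you should take collisions among the first $(4dB+1)^d+1$ prefix sums, counting the empty one.
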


\begin{proof}
    Let $B=\O(G)^{d+1}$ be the number from \cref{lemma:reduction-to-bounded-box}. We pick $M=2^{\ell}\cdot N$ for some $\ell=d\log(dB)+\O(d)$, which will be elaborated upon later. We find $M=2^{d\log(d\cdot\O(G)^{d+1})+\O(d)}\cdot N=\O(G)^{d(d+1)}\cdot N$.

    First, we apply \cref{lemma:reduction-to-bounded-box} to find $\tilde x^1,\dots,\tilde x^k$, $\tilde z^1,\dots,\tilde z^k$ such that $A\tilde x^i=r+M[\tilde z^i;\veczero]$, $\|\tilde z^i\|_\infty\le B$ for $i\in[k]$ and $[\tilde x^1,\dots,\tilde x^k]\rowmajorizedby[x^1,\dots,x^k]$ together with $\tilde z^1+\dots+\tilde z^k=\veczero$. Thus, the all-ones vector is in the kernel of the matrix $\tilde Z=[\tilde z^1,\dots,\tilde z^k]\in\{-B,-B+1,\dots,B\}^{d\times k}$ and can be decomposed into conformal Graver basis elements of $\tilde Z$, which have an $\ell_1$-norm of at most $W:=\O(dB)^d$~\cite{DBLP:conf/icalp/EisenbrandHK18}. Since all these vectors are binary, this can be interpreted as a partition of the multiset $(\tilde x^1,\tilde z^1),\dots,(\tilde x^k,\tilde z^k)$ into submultisets, each of at most $W$ pairs $(\tilde x^i,\tilde z^i)$, so that the $\tilde z^i$-s within a submultiset sum to zero.

    Consider an arbitrary such submultiset $(\overline x^1,\overline z^1),\dots,(\overline x^{\tilde k},\overline z^{\tilde k})$ with $\overline z^1+\dots+\overline z^{\tilde k}=\veczero$ and $\tilde k\le W$. Let $\ell=\lceil\log W\rceil=d\log(dB)+\O(d)$. Conceptually, we add $2^\ell-\tilde k$ copies of $(x^*,\veczero)$ to the multiset and denote $(\overline x^i,\overline z^i)=(x^*,\veczero)$ for $i>\tilde k$. This shows that we can pick $\overline k\ge k$ so that, after setting $\tilde x^i=x^i=x^*$ and $\tilde z^i=\veczero$ for $i>\overline k$, the multiset $(\tilde x^1,\tilde z^1),\dots,(\tilde x^{\overline k},\tilde z^{\overline k})$ partitions into submultisets of exactly $2^\ell$ pairs each so that the $\tilde z^i$-s within a submultiset sum to zero. Therefore, in order to prove \cref{lemma:vector-rearrangement-with-insertion}, it suffices to show that for any such submultiset $(x^1,z^1),\dots,(x^{2^\ell},z^{2^\ell})$, we can construct $\hat x^1,\dots,\hat x^{2^\ell}\in\Z^n$ satisfying $A\hat x^1=\dots=A\hat x^{2^\ell}=r$ and $[\hat x^1,\dots,\hat x^{2^\ell}]\rowmajorizedby[x^1,\dots,x^{2^\ell}]$.

    To accomplish this, we employ \cref{lemma:idp-implies-midpoint-convexity} to take pairwise averages as visualized in \cref{fig:averaging}. To make this precise, we will, for each level $l\in\{0,1,\dots,\ell\}$, construct a partition $\mathcal I_l$ of the indices $[2^\ell]$ of the submultiset into exactly $2^{\ell-l}$ blocks, each of cardinality exactly $2^l$. We associate an average right-hand side vector $z_I=\tfrac1{|I|}\sum_{i\in I}z^i$ with each block $I\subseteq[2^\ell]$. Additionally, for each block $I=\{i^1,\dots,i^{|I|}\}$, we will construct a multiset $X_I=\{x_I^1,\dots,x_I^{|I|}\}$ of $|I|$ integral solutions to $Ax=r+M[z_I;\veczero]$ satisfying $[x_I^1,\dots,x_I^{|I|}]\rowmajorizedby[x^{i^1},\dots,x^{i^{|I|}}]$.
    
    We show this by induction on the level $l\in\{0,1,\dots,\ell\}$. For level $l=0$, we start with the singleton partition $\mathcal I_0=\{\{1\},\{2\},\dots,\{2^\ell\}\}$ and $X_{\{i\}}=\{x^i\}$ for $i\in[2^\ell]$. Now assume that we have constructed a suitable partition for a level $l\in\{0,\dots,\ell-1\}$. To construct a partition $\mathcal I_{l+1}$, we arbitrarily merge pairs of blocks from the lower level $l$. This yields a partition of the correct dimensions. It is left to show how to obtain the multisets $X_I$ for this partition. Let $I=I_A\cup I_B\in\mathcal I_{l+1}$ be a newly constructed block from distinct blocks $I_A=\{i_A^1,\dots,i_A^{2^l}\},I_B=\{i_B^1,\dots,i_B^{2^l}\}\in\mathcal I_l$. We arbitrarily pair up each solution $x_{I_A}^i$ from $X_{I_A}=\{x_{I_A}^1,\dots,x_{I_A}^{2^l}\}$ with a solution $x_{I_B}^i$ from $X_{I_B}=\{x_{I_B}^1,\dots,x_{I_B}^{2^l}\}$ and construct two new solutions $\hat x_A^i,\hat x_B^i$ to $Ax=r+Mz_I$ satisfying $[\hat x_A^i,\hat x_B^i]\rowmajorizedby[x_{I_A}^i,x_{I_B}^i]$. We obtain such points $\hat x_A^i$ and $\hat x_B^i$ by applying \cref{lemma:idp-implies-midpoint-convexity} with dilation $N$, $r'=r+M[z_I;\veczero]$, $z=2^\ell(z_{I_B}-z_I)$ and points $x_{I_A}^i,x_{I_B}^i$. Note that $z_I=\tfrac12(z_{I_A}+z_{I_B})$, $2^\ell(z_{I_B}-z_I)\in2^\ell\cdot\tfrac1{2^{l+1}}\Z^d\subseteq\Z^d$, and $Mz_I\in\Z^d$ as a result of the dimensions of the partition. Furthermore, it indeed holds that
    \begin{align*}
        Ax_{I_A}^i&=r+M[z_{I_A};\veczero]=r+M[2z_I-z_{I_B};\veczero]=r+M[z_I;\veczero]+M[z_I-z_{I_B};\veczero]\\
        &=r'+2^\ell\cdot N[z_I-z_{I_B};\veczero]=r'+N[-z;\veczero]
    \end{align*}
    and that $Ax_{I_B}^i=r'+2^\ell\cdot N[z_{I_B}-z_I;\veczero]=r'+N[z;\veczero].$
    By using the pairwise majorization from \cref{lemma:idp-implies-midpoint-convexity}, permuting vectors, and using the assumed majorization for $X_{I_A}$ and $X_{I_B}$, we find that
    \begin{align*}
        &\bigl[\hat x_A^1,\hat x_B^1,\dots,\hat x_A^{2^l},\hat x_B^{2^l}\bigr]\rowmajorizedby\bigl[x_{I_A}^1,x_{I_B}^1,\dots,x_{I_A}^{2^l},x_{I_B}^{2^l}\bigr]\\
        &\rowmajorizedby\bigl[x_{I_A}^1,\dots,x_{I_A}^{2^l},x_{I_B}^1,\dots,x_{I_B}^{2^l}\bigr]\rowmajorizedby\bigl[x^{i_A^1},\dots,x^{i_A^{2^l}},x^{i_B^1},\dots,x^{i_B^{2^l}}\bigr],
    \end{align*}
    as required. Now, using induction, we can obtain a partition for level $l=\ell$ with the desired properties. This yields the multiset $X_{[2^\ell]}=\{\hat x^1,\dots,\hat x^{2^\ell}\}$ of solutions to $Ax=r$ that is majorized by the original solutions as required.
\end{proof}

We now show that the additional dummy points $x^*$ introduced in \cref{lemma:vector-rearrangement-with-insertion} are no obstacle in terms of the periodic convex extensibility of the value function.

\begin{lemma}
    \label{lemma:rearrangement-to-convexity}
    Let $d,\mathcal A$, and $M$ be as in \cref{lemma:vector-rearrangement-with-insertion}, then the value function $h\colon\Z^d\to\R\cup\{-\infty,\infty\}$ defined by $h(z)=\min\{f(x)\ \vert\ Ax=r+M[z;\veczero],\,x\in\Z^n\}$ is convex extensible for any $A\in\mathcal A$, $A\in\Z^{m\times n}$, separable convex $f$, and $r\in\Z^m$.
\end{lemma}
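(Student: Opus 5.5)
We reduce convex extensibility of $h$ on $\Z^d$ to the centered inequality (\ref{eq:0-centered-convexity}) and then derive that inequality from \cref{lemma:vector-rearrangement-with-insertion}.

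\emph{Reduction to (\ref{eq:0-centered-convexity}).} A function $h\colon\Z^d\to\R\cup\{-\infty,\infty\}$ is convex extensible exactly when its convex envelope agrees with it on integer points. Equivalently, for every $z\in\Z^d$ and every rational convex combination $z=\sum_i\lambda_iz^i$ of integer points, we need $h(z)\le\sum_i\lambda_ih(z^i)$. Since the $z^i$ are integral, any such rational combination can be written with a common denominator $k$ by repeating points, which gives $z=\tfrac1k\sum_{i\in[k]}z^i$. Irrational combinations reduce to rational ones: Carathéodory lets us restrict to affinely independent points, and on the relevant face the set of weights representing $z$ is a rational polytope. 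Next we translate. Replacing $r$ by $r+M[z;\veczero]$ and $z^i$ by $z^i-z$ yields a value function of the same form, now centered at the origin with $\sum_iz^i=\veczero$. So it suffices to prove $h(\veczero)\le\tfrac1k\sum_ih(z^i)$. If some $h(z^i)=\infty$, there is nothing to show.

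\emph{Finite case.} Assume every $h(z^i)$ is finite and attained, and pick minimizers $x^i$ with $f(x^i)=h(z^i)$. At least one feasible point $x^*$ with $Ax^*=r$ must exist. To get it, apply the construction of \cref{lemma:vector-rearrangement-with-insertion} to the indicator of the box spanned by the $x^i$. Equivalently, observe that the proof of that lemma produces solutions to $Ax=r$ whenever the submultisets contain no dummies. Since dummies are only needed to pad each submultiset to size $2^\ell$, we first show feasibility of $Ax=r$ by another route: pad with copies of $x^1$ and use \cref{lemma:idp-implies-midpoint-convexity} as in the averaging argument.

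This feasibility step is the main technical obstacle. A cleaner option is to apply the lemma to the sequence repeated $2^\ell$ times. Each zero-sum submultiset of size $\tilde k$, taken $2^\ell$ times, can be regrouped into blocks of size $2^\ell$ with zero sum when $\tilde k$ divides appropriately, so no dummies are required.

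Once $x^*$ exists, we choose it as a minimizer of $h(\veczero)$, or as a near-minimizer if $h(\veczero)=-\infty$. \cref{lemma:vector-rearrangement-with-insertion} then gives $\hat x^1,\dots,\hat x^{\overline k}$ with $A\hat x^j=r$ and
\[
\sum_{j\in[\overline k]}f(\hat x^j)\le\sum_{i\in[k]}f(x^i)+(\overline k-k)f(x^*).
\]
Since $f(\hat x^j)\ge h(\veczero)=f(x^*)$ for every $j$, we obtain $\overline k\,h(\veczero)\le\sum_ih(z^i)+(\overline k-k)h(\veczero)$. When $h(\veczero)$ is finite this rearranges to (\ref{eq:0-centered-convexity}).

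\emph{Unbounded cases.} If $h(\veczero)=-\infty$, the inequality holds trivially. Suppose instead some $h(z^i)=-\infty$. Finiteness of $h(\veczero)$ together with convexity along the line through $z^i$ and the reflected point $-z^i$ would then force a contradiction. The cleanest approach is to show directly that $h(z)=-\infty$ for one $z$ in $\operatorname{dom}h$ implies $h\equiv-\infty$ on the domain's lattice points. An improving integral kernel direction $g$ with $f(x+tg)\to-\infty$ transfers to any other feasible right-hand side by separable convexity: the recession directions of $f$ are shared, and $g$ lies in the kernel of $A$. A function that is identically $-\infty$ on its domain, and $+\infty$ outside, is convex extensible once the domain is itself convex extensible, i.e.\ once $\operatorname{dom}h$ equals (convex hull of $\operatorname{dom}h$)$\,\cap\,\Z^d$. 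This follows from the finite case applied to the indicator function $f=0$ on the feasible box. I expect the feasibility and $-\infty$ bookkeeping, not the majorization itself, to be the delicate part of the proof.
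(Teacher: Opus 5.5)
Your overall structure (reduce to $h(\veczero)\le\tfrac1k\sum_ih(z^i)$, then compare against dummy copies of a minimizer $x^*$ via \cref{lemma:vector-rearrangement-with-insertion}) is the paper's. However, two steps it rests on are not established.

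\textbf{Existence of an integral $x^*$ with $Ax^*=r$.} None of your suggestions works.
\begin{itemize}
\item Invoking \cref{lemma:vector-rearrangement-with-insertion} is circular, since that lemma takes such an $x^*$ as input.
\item Padding with copies of $x^1$ destroys the zero sum.
\item Repeating the sequence $2^\ell$ times does not yield zero-sum blocks of size $2^\ell$. For $z$'s equal to $2,-1,-1$ (with $d=1$), every zero-sum submultiset has size divisible by $3$.
\end{itemize}
The underlying obstruction is that pairwise midpoint averaging only reaches dyadic convex combinations of the $z^i$, and $\veczero$ need not be one. That is exactly why dummies already sitting at $\veczero$ are needed. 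Feasibility at $\veczero$ must therefore come from the IDP itself. The paper observes that $M[z^1;\veczero]=A(x^1-\tfrac1k\sum_ix^i)$ has a real preimage, flips column signs to make it nonnegative, and uses that a polyhedron with the IDP is integral (the $d$-rIDP after an $N$-dilation, $N\mid M$). This gives an integral $\tilde y$ with $A\tilde y=M[z^1;\veczero]$, so $x^1-\tilde y$ solves $Ax=r$. Equivalently, apply \cref{observation:idp-closed-under-inversion} to $\sum_ix^i-kx^1$, whose image is $(k2^\ell)N[-z^1;\veczero]$. A minor point in your reduction: plain Carath\'eodory does not control $\sum_i\lambda_ih(z^i)$. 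You must pass to a vertex of the weight polytope that minimizes this sum, which is the paper's LP argument.

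\textbf{The case $h(\veczero)=+\infty$ is never excluded.} Even when $Ax=r$ is integer-feasible, all its solutions may lie outside the domain of $f$. This is not exotic: bounds are encoded as indicator functions throughout the paper. Then $f(x^*)=\infty$, and your inequality $\sum_jf(\hat x^j)\le\sum_if(x^i)+(\overline k-k)f(x^*)$ is vacuous. Your conclusion ``when $h(\veczero)$ is finite'' therefore leaves exactly the convexity-violating case open. Your domain-convexity argument via a box indicator is circular for the same reason: for an indicator, the only nontrivial case is $h(\veczero)=\infty$.

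The missing idea is that the majorization holds for \emph{every} separable convex function, so you may measure with a better one. The paper proceeds as follows:
\begin{itemize}
\item Replace $f$ by an everywhere finite $\tilde f$ that agrees with $f$ on the box spanned by the $x^i$ and grows with a steep slope $S$ outside it.
\item Choose $x^*$ minimizing $\tilde f$ over $Ax=r$.
\item Derive $\tilde f(x^*)\le\tfrac1k\sum_i\tilde f(x^i)\le\max_i\tilde f(x^i)$.
\item Conclude from the choice of $S$ that $f(x^*)<\infty$.
\end{itemize}
Your $-\infty$ case inherits this gap, since it needs $\veczero$ to lie in the domain of $h$. The claimed transfer of an integral improving kernel direction is also only asserted. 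It would need, for example, a Graver/superadditivity argument for existence and a finite-asymptotic-slope argument for the transfer. The paper sidesteps this by applying the finite case to the truncations $\sum_j\max\{l,f_j(x_j)\}$ and letting $l\to-\infty$.
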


\begin{proof}
    It is well-known that $h$ is convex extensible on $\Z^d$ if and only if
    \begin{equation}
        \label{eq:weighted-convexity}
        h\Bigl(\sum_{i\in[\underline k]}\smash{\underline\lambda}^iz^i\Bigr)\le\sum_{i\in[\underline k]}\smash{\underline\lambda}^ih(z^i)
    \end{equation}
    holds when $\underline\lambda\in\R_{>0}^{\underline k}$ are positive convex multipliers, i.e., $\sum_{i\in[\underline k]}\underline\lambda^i=1$, and $z^1,\dots,z^{\underline k}\in\Z^d$ are points with an integral average $z^*:=\sum_{i\in[\underline k]}\smash{\underline\lambda}^iz^i\in\Z^d$ and $h(z^i)<\infty$ for $i\in[\underline k]$. Since we aim to prove the lemma for any arbitrary $r\in\Z^m$, we may do a change of variables and consider the function $h'(z)=h(z+z^*)=\min\{f(x)\ \vert\ Ax=(r+M[z^*;\veczero])+M[z;\veczero],\,x\in\Z^n\}$ instead so that $h'(\veczero)\le\sum_{i\in[\underline k]}\smash{\underline\lambda}^ih'(z^i-z^*)$ is equivalent to (\ref{eq:weighted-convexity}). Therefore, we may assume that $z^*=\veczero$ without loss of generality, i.e., it suffices to show that
    \begin{equation}
        \label{eq:0-centered-weighted-convexity}
        h(\veczero)\le\sum_{i\in[\underline k]}\smash{\underline\lambda}^ih(z^i)
    \end{equation}
    for any points $z^1,\dots,z^{\underline k}\in\Z^d$ so that $z^1+\dots+z^{\underline k}=\veczero$. We first treat the case where $h(z^i)>-\infty$ for all $i\in[\underline k]$, i.e., all corresponding optimization problems are feasible and bounded.

    Observe that $\underline\lambda$ is a solution to the linear program
    \begin{equation}
        \label{eq:convex-combination-lp}
        \begin{aligned}
            & \text{minimize} & \sum_{i\in[\underline k]}h(z^i)\lambda^i, &\\
            & \text{subject to} \quad & \sum_{i\in[\underline k]}z^i\lambda^i & =\veczero,\\
            & & \sum_{i\in[\underline k]}\lambda^i & =1,\\
            & & \lambda & \ge\veczero.
        \end{aligned}
    \end{equation}
    Let $g(\lambda)$ denote the objective value $g(\lambda)=\sum_{i\in[\underline k]}h(z^i)\lambda^i$ of (\ref{eq:convex-combination-lp}). We need to show that $h(\veczero)\le g(\smash{\underline\lambda})$. The polyhedron underlying (\ref{eq:convex-combination-lp}) is rational, which shows that there exists a rational solution $\tilde\lambda$ to (\ref{eq:convex-combination-lp}) satisfying $g(\tilde\lambda)\le g(\smash{\underline\lambda})$. Let $k\in\Z_{\ge1}$ be an integer so that $k\cdot\tilde\lambda$ is integral. In this way, $g(\tilde\lambda)$ is seen to be the unweighted average of $k$ terms of the form $h(z^i)$, where $z^i$ appears $k\tilde\lambda^i$ times. Let $\tilde z^1,\dots,\tilde z^k$ denote these integer vectors with their appropriate multiplicities. Thus, it suffices to show that $h(\veczero)\le\tfrac1k\sum_{i\in[k]}h(\tilde z^i)=g(\tilde\lambda)$ as this implies $h(\veczero)\le g(\underline\lambda)$. Since all $h(z^i)$ are assumed to be finite, there are solutions $x^1,\dots,x^k\in\Z^n$ to $Ax^i=r+M[\tilde z^i;\veczero]$ attaining $f(x^i)=h(\tilde z^i)$ for $i\in[k]$.

    Write $f(x)=\sum_{j\in[n]}f_j(x_j)$. For the sake of showing $h(\veczero)\le\tfrac1k\sum_{i\in[k]}h(\tilde z^i)$, we may assume that each $f_j$ is a nonnegative piecewise linear function with integral breakpoints and effective domain
    \[
        [L_j,R_j]=\bigl[\min_{i\in[k]}x_j^i,\max_{i\in[k]}x_j^i\bigr],
    \]
    i.e., $f_j(x_j)=\infty$ for $x_j\notin[L_j,R_j]$. We now construct an alternative separable convex function $\tilde f$ that is finite everywhere. We can define the functions $\tilde f_j\colon\R\to\R$ by setting
    \[
        \tilde f_j(x_j)=\begin{cases}
            f_j(x_j),&\text{if $L_j\le x_j\le R_j$,}\\
            f_j(L_j)+S(L_j-x_j),&\text{if $x_j<L_j$,}\\
            f_j(R_j)+S(x_j-R_j),&\text{if $x_j>R_j$,}
        \end{cases}
    \]
    for some large slope $S$ and let $\tilde f(x)=\sum_{j\in[n]}\tilde f_j(x_j)$. We pick $S$ sufficiently large so that each $\tilde f_j$ is convex and so that for any points $x,y\in\Z^n$ for which $f(y)<\infty$, but $f(x)=\infty$, we have that $\tilde f(y)<\tilde f(x)$.

    We now first argue that there exists an integral solution $x^*\in\Z^n$ to $Ax^*=r$. To see this, note that $Ax^1=r+M[\tilde z^1;\veczero]$ and $A\cdot\tfrac1k\sum_{i\in[k]}x^i=r$. Thus, $M[\tilde z^1;\veczero]=A(x^1-\tfrac1k\sum_{i\in[k]}x^i)$ is in the row space of $A$. We can invert the sign of some columns of $A$ to obtain a matrix $A'$ and nonnegative $y'\in\R_{\ge0}^n$ such that $A'y'=M[\tilde z^1;\veczero]$. By assumption, $A'$ has the $d$-rIDP after an $N$ dilation and $M$ is a multiple of $N$. Thus, the polyhedron $\{y'\in\R_{\ge0}^n:A'y'=N\cdot (\tfrac MN\cdot[\tilde z^1;\veczero])\}$ has the IDP. Since integrality of a polyhedron is a necessary condition for it to have the IDP, there exists an integral $\tilde y'\in\Z_{\ge0}^n$ such that $A'\tilde y'=M[\tilde z^1;\veczero]$. Reverting the sign inversion shows that there is an integral $\tilde y\in\Z^n$ so that $A\tilde y=M[\tilde z^1;\veczero]$, meaning that $x^1-\tilde y\in\Z^n$ satisfies $A(x^1-\tilde y)=r$.
    
    Continuing, we may assume that $h(\veczero)>-\infty$, since (\ref{eq:0-centered-convexity}) holds trivially otherwise. Now we let $x^*\in\Z^n$ be a solution to $Ax^*=r$ that minimizes $\tilde f(x^*)$. We apply \cref{lemma:vector-rearrangement-with-insertion} to the points $x^1,\dots,x^k$ and $x^*$. This yields an integer $\overline k\ge k$ and points $\hat x^1,\dots,\hat x^{\smash{\overline k}}\in\Z^n$ so that $A\hat x^i=r$ for all $i\in[\overline k]$ and $\tilde f(\hat x^1)+\dots+\tilde f(\hat x^{\smash{\overline k}})\le\tilde f(x^1)+\dots+\tilde f(x^{\smash{\overline k}})$. W.l.o.g., assume that $\hat x^1$ is the point minimizing $\tilde f(\hat x^i)$, which satisfies $\tilde f(\hat x^1)\le\tfrac1{\,\overline k\,}\sum_{i\in[\overline k]}\tilde f(x^i)$. This implies that
    \begin{align*}
        &\overline k\tilde f(x^*)\le\overline k\tilde f(\hat x^1)\le\sum_{i\in[\overline k]}\tilde f(x^i)=\sum_{i\in[k]}\tilde f(x^i)+(\overline k-k)\tilde f(x^*)\\
        \implies&k\tilde f(x^*)\le\sum_{i\in[k]}\tilde f(x^i)\\
        \implies&\tilde f(x^*)\le\tfrac1k\sum_{i\in[k]}\tilde f(x^i).
    \end{align*}
    In particular, $\tilde f(x^*)\le\max_{i\in[k]}\tilde f(x^i)=\tilde f(y)$ for some $y=x^i$, $i\in[k]$. Since $f(x^i)<\infty$ for all $i\in[k]$, we find that $f(x^*)<\infty$ due to our choice of $S$. But then also
    \[
        h(\veczero)\le f(x^*)=\tilde f(x^*)\le\tfrac1k\sum_{i\in[k]}\tilde f(x^i)=\tfrac1k\sum_{i\in[k]}f(x^i)=g(\tilde\lambda)
    \]
    follows because $\tilde f$ and $f$ agree when $f$ is finite, completing the argument for the case where $h(z^i)>-\infty$ in (\ref{eq:0-centered-weighted-convexity}).

    If there is a $z^{i^*}$ in (\ref{eq:0-centered-weighted-convexity}) so that $h(z^{i^*})=-\infty$, we need to argue that $h(\veczero)=-\infty$. This can shown via the finite case, again by using a different separable convex objective function $\tilde f_l(x)=\sum_{j\in[n]}\max\{l,f_j(x_j)\}$. Note that $\tilde h_l(z^{i^*})=\min\{\tilde f_l(x)\ \vert\ Ax=r+M[z^{i^*};\veczero],\,x\in\Z^n\}$ converges to $-\infty$ as $l\to-\infty$. Now, convex extensibility applied to $\tilde h_l$ shows that $h(\veczero)\le\tilde h_l(\veczero)\le\sum_{i\in[\underline k]}\smash{\underline\lambda}^i\tilde h_l(z^i)\le\smash{\underline\lambda}^{i^*}\tilde h_l(z^{i^*})+\sum_{i\in[\smash{\underline k}]\setminus\{i^*\}}\smash{\underline\lambda}^i\tilde h_l(z^i)$, where the former term converges to $-\infty$ as $l\to-\infty$ and the latter term is bounded from above.
\end{proof}
We note that periodic convexity can also be shown without introducing the dummy points $x^*$ and instead taking pairwise weighted averages using \cref{proposition:idp-iff-convex-extensibility-along-line}. However, this appears to yield a weaker bound on $M$ because it seems more difficult to bound the fractionality of the intermediary points $z_I$ when taking the weighted averages, which results in needing a dilation that is significantly larger than $2^\ell$ in \cref{lemma:vector-rearrangement-with-insertion}.

We can now piece together the lemmata of this section to derive \cref{theorem:general-convex-extensibility}.

\theoremgeneralconvexextensibility*

\begin{proof}
    We obtain $N=2^{\O((\sqrt m\Delta)^m)}$ from \cref{lemma:general-scaled-idp}. We apply \cref{lemma:vector-rearrangement-with-insertion} with $d=m$ and $G=\O(m\Delta)^m$~\cite{DBLP:conf/icalp/EisenbrandHK18}. Thus, the consequence of \cref{lemma:rearrangement-to-convexity} holds for $M=(\O(m\Delta)^m)^{m(m+1)}\cdot2^{\O((\sqrt m\Delta)^m)}=2^{\O((\sqrt m\Delta)^m)}$.
\end{proof}
We note that any $M$ satisfying the conditions of \cref{theorem:general-convex-extensibility} must be at least $M=2^{\Omega(\Delta^m)}$, even in the setting of only considering feasibility over the integers, i.e., $f=0$. This follows from the constructions of Klein~\cite{DBLP:journals/mp/Klein22} as summarized in \cref{lemma:lower-bound-construction}, which he used to prove a doubly exponential lower bound on the Graver complexity of two-stage stochastic matrices.
\begin{thmwithsinglecitation}{lemma}{\cite{DBLP:journals/mp/Klein22}}
    \label{lemma:lower-bound-construction}
    Let $m,\Delta\in\Z_{\ge1}$ and let $N$ be a positive integer $N$ that is at most $U=\Delta^m-1$, then there exists a matrix $A_N\in\{-1,0,1,2,\dots,\Delta\}^{m\times m}$ so that the following statement holds: there exists an integral vector $x\in\Z^m$ so that $A_Nx=[b;\veczero]$ if and only if $b\in\Z$ is divisible by $N$.
\end{thmwithsinglecitation}
Set $U=\Delta^m-1$. Consider an $M$ so that \cref{theorem:general-convex-extensibility} holds. For any $N\in[U]$, we can consider the value function $h(z)=\min\{0\ \vert\ A_Nx=M[z;\veczero],\,x\in\Z^m\}$ which is $0$ if and only if $Mz$ is divisible by $N$ by \cref{lemma:lower-bound-construction}. Then convex extensibility of $h$ applied to $h(\veczero)=h(N)=0$ shows that $h(1)=0$, meaning that $M$ is divisible by $N$. Thus, $M$ must be a multiple of $\lcm\{1,2,\dots,U\}\ge2^{\Omega(U)}=2^{\Omega(\Delta^m)}$~\cite{Nair01021982}. Furthermore, $M$ cannot be chosen adaptively based on $A$ if one wishes to retain convexity uniformly over all objectives $f$. For this, consider the composite constraint matrix $A=[A_1,\dots,A_U]$ and, for each $N\in[U]$, consider the value function $h_N(z)=\min\{f_N(x)\ \vert\ A_1x^1+\dots+A_Ux^U=M[z;\veczero],\,x^1,\dots,x^U\in\Z^m\}$ where $f_N(x)=0$ if all $x^{N'}=\veczero$ for $N'\in[U]\setminus\{N\}$ and $f_N(x)=\infty$ otherwise. An analogous argument, using that $f_N$ excludes the use of all but the $N$-th variable bricks, shows that any $M$ such that the value function is convex extensible for any objective $f$ must be divisible by all of $1,2,\dots,U$.

%% file: sections/algorithms.tex
\section{Algorithms}
\label{sec:algorithms}

We will now exploit the periodic convexity of value functions of (block-structured) integer programs to obtain FPT algorithms. Each of our algorithms is obtained using the algorithmic framework of \cref{lemma:fixed-phase-value-function-reformulation}, which relies on fixed-phase value function reformulations. The framework covers integer programming problems that consist of a fixed number $p$ of arbitrary first-stage variables that, through $d$ constraints, interact with $n$ second-stage variables. Since $n$ is allowed to be large, we require that the constraint matrix of the second stage variables induces tractable integer programs with a value function that is periodically convex with a bounded period $M$.

In the proof of \cref{lemma:fixed-phase-value-function-reformulation}, we move the objective value of the second-stage problem into the objective on the first-stage variables and follow the remainder guessing strategy by Cslovjecsek et al.~\cite{DBLP:journals/theoretics/CslovjecsekKLPP25}: we guess and fix the remainder of each first-stage variable modulo $M$. Such a guess also fixes the remainder modulo $M$ of the arguments of the value functions in the value function reformulation. The periodic convexity of the value function of the second-stage problem will then ensure that we obtain a low-dimensional integer program with a convex extensible objective function, which can be optimized using the algorithm by Veselov et al.~\cite{DBLP:journals/dam/VeselovGZC20}. Their algorithm can efficiently minimize discrete convic objective functions over integral variables, even when the objective function is only accessible through a comparison oracle. In particular, their algorithm applies to finite convex extensible functions, which are discrete convic.

\begin{lemma}
    \label{lemma:fixed-phase-value-function-reformulation}
    Let $A\in\Z^{m\times n}$, $d,U\in\Z_{\ge0}$, and $M\in\Z_{\ge1}$ be such that:
    \begin{itemize}
        \item The value function $h\colon\Z^d\to\R\cup\{-\infty,\infty\}$ given by
        \begin{equation}
            \label{eq:value-function-assumed-convex-extensibility}
            h(z)=\min\{f(x)\ |\ [A,I]x=r+M[z;\veczero],\,x\in\Z^{n+m}\}
        \end{equation}
        is convex extensible for any separable convex $f\colon\R^{n+m}\to\R\cup\{\infty\}$ and $r\in\Z^m$.
        \item An optimal solution to
        \begin{equation}
            \label{eq:generic-second-stage-ip}
            \min\bigl\{f(x)\bigm\vert [A,I]x=b,\,l\le x\le u,\,x\in\Z^{n+m}\bigr\},
        \end{equation}
        can be found in time $T_{\mathrm{opt}}(\sigma)$ for any $l,u\in\Z^{n+m}$, $b\in\Z^m$, and separable convex $f\colon\R^{n+m}\to\R$ assuming that: an initial feasible solution is provided, $\|u-l\|_\infty\le\sigma$, and $f$ is accessible through a comparison oracle on $\Z^{n+m}$.
        \item The product $Ax\in\Z^m$ can be computed in time $T_{\mathrm{mul}}$ for any $x\in\Z^n$.
        \item The induced matrix norm $\|A\|_\infty$ is bounded by $U$ from above.
    \end{itemize}
    Let $C\in\Z^{d\times p}$, $l,u\in\Z^n$, $b\in\Z^m$, $V\in\R^{\ell\times p}$, $w\in\R^\ell$, $c\in\R^p$, and $\rho>0$ be such that $\{y\in\R^p:Vy\le w\}\subseteq\{y\in\R^p:\|y-c\|_\infty\le\rho\}$. Let $g\colon\R^p\to\R$ be convex and $f\colon\R^n\to\R$ be separable convex and let both be accessible through a comparison oracle on $\Z^{p+n}$. Then an optimal solution to
    \begin{equation}
        \label{eq:generic-2-stage-ip}
        \min\bigl\{g(y)+f(x)\bigm\vert[Cy;\veczero]+Ax=b,\,Vy\le w,\,y\in\Z^p,\,l\le x\le u,\,x\in\Z^n\bigr\}
    \end{equation}
    can be found in time
    \[
        M^p\cdot2^{\O(p^2\log p)}\cdot\Bigl(\log^{\O(1)}(\rho)+\log(\rho)\cdot\bigl(T_{\mathrm{opt}}((1+2U)\|u-l\|_\infty)+T_{\mathrm{mul}}+m+\ell\bigr)\Bigr).
    \]
\end{lemma}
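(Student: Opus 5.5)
The plan is to enumerate phases and reduce each phase to a convex program in $y$. For every phase vector $\phi\in\{0,1,\dots,M-1\}^p$, substitute $y=\phi+My'$ with $y'\in\Z^p$. Then $Cy=C\phi+MCy'$. Define the phase-restricted value function
\[
    h_\phi(y')=\min\bigl\{\tilde f(x,s)\bigm\vert Ax+s=b-[C\phi;\veczero]-M[Cy';\veczero],\,(x,s)\in\Z^{n+m}\bigr\}.
\]
Here $\tilde f(x,s)=f(x)+\iota_{[l,u]}(x)+\iota_{\{\veczero\}}(s)$, and the slack $s$ is forced to be zero by an indicator, which is separable convex. Replacing $z$ by $-Cy'$ and applying the first assumption shows that $z\mapsto h(z)$ is convex extensible. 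Composing with the linear map $y'\mapsto -Cy'$ preserves convex extensibility, since the extension composed with a linear map is convex. Hence $G_\phi(y')=g(\phi+My')+h_\phi(y')$ is convex extensible on the set where it is finite. Problem (\ref{eq:generic-2-stage-ip}) then equals $\min_\phi\min_{y'}G_\phi(y')$ subject to $V(\phi+My')\le w$. This constraint defines a polytope contained in an $\ell_\infty$ box of radius $\rho/M\le\rho$ around $(c-\phi)/M$.

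Next I minimize each $G_\phi$ with the algorithm of Veselov et al. It optimizes a discrete convic (in particular a finite convex extensible) function over a bounded polytope in dimension $p$. It uses $2^{\O(p^2\log p)}\log\rho$ comparison-oracle calls plus $2^{\O(p^2\log p)}\log^{\O(1)}\rho$ further arithmetic. Summing over the $M^p$ phases gives the factor $M^p$.

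There is one technical catch: $G_\phi$ may be $\infty$ where the second stage is infeasible, while the algorithm needs a finite objective. I would handle this as in the proof of \cref{lemma:rearrangement-to-convexity}. The box constraints and $s=\veczero$ are replaced by steep linear penalties $S\cdot\mathrm{dist}$, and the equality is relaxed with slack allowed to be nonzero. The first assumption applies to all separable convex objectives, so the penalized value function is still convex extensible, now finite everywhere, and for large $S$ its minimizers coincide with those of the original problem whenever the original problem is feasible. The resulting unbounded-slack problem can be made bounded. I choose the box of $(x,s)$ as $x\in[l,u]$ and $s$ in a box of width $2U\|u-l\|_\infty$ around $b'-Al$. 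This keeps $\|u'-l'\|_\infty\le(1+2U)\|u-l\|_\infty$, which matches the stated $T_{\mathrm{opt}}$ argument.

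The main obstacle is implementing one oracle comparison of $G_\phi(y'_1)$ versus $G_\phi(y'_2)$. For each query point I compute $b'=b-[C\phi+MCy';\veczero]$ and form the trivially feasible start $x=l$, $s=b'-Al$, which costs $T_{\mathrm{mul}}+m$. I then call the second-stage solver with this initial solution in time $T_{\mathrm{opt}}((1+2U)\|u-l\|_\infty)$. Finally I compare $g(\phi+My')+\tilde f(x^*,s^*)$ between the two points, using the comparison oracles for $g$ and $f$. One subtlety is that comparing sums with oracles only for comparisons of $f$ values needs care. I would treat the combined separable objective on $(y,x)$ as the given oracle on $\Z^{p+n}$, and add the penalty terms symbolically, for example by lexicographic comparison of the penalty first. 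The feasibility check $Vy\le w$ costs $\O(\ell)$ per query. Multiplying the per-query cost by the number of queries yields the stated bound.
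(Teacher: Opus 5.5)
Your proposal is correct and follows the paper's proof essentially step for step. You guess the phase $\phi$ of $y$ modulo $M$ and pass to the value function of $[A,I]$ with slack $s$ penalized, compared lexicographically before the original objective, so that it is finite and convex extensible by the first assumption. You restrict $s$ to a box of width $2U\|u-l\|_\infty$ around $b'-Al$, warm-start the solver with $x=l$, $s=b'-Al$, and minimize over $y'$ with the algorithm of Veselov et al.

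The only deviation is that you also penalize the box constraints on $x$. This is unnecessary and slightly inconsistent with the subproblem you actually solve. The paper simply keeps $l\le x\le u$ as a separable convex indicator, which already guarantees feasibility via $x=l$.
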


\begin{proof}
    We first ensure that the second-stage subproblem is always feasible by verifying that $l\le u$ and introducing $m$ dummy variables $s$. Instead of solving (\ref{eq:generic-2-stage-ip}) directly, we will find an optimal solution to
    \begin{equation}
        \label{eq:feasible-generic-2-stage-ip}
        \begin{aligned}
            \min\bigl\{g(y)+f(x)+\delta(s)\bigm\vert{}&[Cy;\veczero]+Ax+s=b,\,Vy\le w,\\
            &y\in\Z^p,\,l\le x\le u,\,x\in\Z^n,\,s\in\Z^m\bigr\},
        \end{aligned}
    \end{equation}
    where $\delta(s)=\alpha\cdot\|s\|_1$ is a separable convex penalty function for some sufficiently large $\alpha>0$. More specifically, we guarantee that $g(y^1)+f(x^1)+\delta(s^1)<g(y^2)+f(x^2)+\delta(s^2)$ if $\|s^1\|_1<\|s^2\|_1$ for any feasible $(y^1,x^1)$ and $(y^2,x^2)$ in (\ref{eq:generic-2-stage-ip}). Note that the value of $\alpha$ does not need be computed because the comparison oracle for the objective function of (\ref{eq:feasible-generic-2-stage-ip}) can be implemented by lexicographically comparing $\|s\|_1$ and the original objective. The original problem (\ref{eq:generic-2-stage-ip}) is feasible if and only if every optimal solution to (\ref{eq:feasible-generic-2-stage-ip}) satisfies $s=0$, in which case an optimal solution to (\ref{eq:generic-2-stage-ip}) can be obtained by projecting an arbitrary optimal solution to (\ref{eq:feasible-generic-2-stage-ip}) onto the $y$ and $x$ variables.

    To solve (\ref{eq:feasible-generic-2-stage-ip}), we guess the remainder of each of the coordinates of $y$ modulo $M$, i.e., we guess the vector $\hat r\in\{0,1,\dots,M-1\}^p$ so that there is an optimal solution to (\ref{eq:feasible-generic-2-stage-ip}) of the form $y=\hat r+Mv$ for some $v\in\Z^p$. Substitution in (\ref{eq:feasible-generic-2-stage-ip}) yields the equivalent fixed phase problem
    \begin{alignat*}{2}
            &\min\bigl\{g(\hat r+Mv)+f(x)+\delta(s)\bigm\vert{}&&[C(\hat r+Mv);\veczero]+Ax+s=b,\,V(\hat r+Mv)\le w,\\
            &&&v\in\Z^p,\,l\le x\le u,\,x\in\Z^n,\,s\in\Z^m\bigr\},\\
            ={}&\min\bigl\{g(\hat r+Mv)+f(x)+\delta(s)\bigm\vert{}&&Ax+s=b+[-C\hat r-CMv;\veczero],\\
            &&&l\le x\le u,\,x\in\Z^n,\,s\in\Z^m,\,MVv\le w-V\hat r,\,v\in\Z^p\bigr\},
    \end{alignat*}
    which we can reformulate as
    \begin{equation}
        \label{eq:generic-fixed-phase-value-function-reformulation}
        \min\bigl\{g(\hat r+Mv)+h(-Cv)\bigm\vert MVv\le w-V\hat r,\,v\in\Z^p\bigr\}
    \end{equation}
    using the value function $h\colon\Z^d\to\R$ given by
    \begin{align*}
        h(z)=\min\bigl\{f(x)+\delta(s)\bigm\vert{}&[A,I][x;s]=(b+[-C\hat r;\veczero])+M[z;\veczero],\\
        &l\le x\le u,\,x\in\Z^n,\,s\in\Z^m\bigr\}.
    \end{align*}
    The restriction of $x$ to $l\le x\le u$ in the definition of $h$ can be moved into the objective by adding an indicator function $\xi$ satisfying $\xi(x)=0$ if and only if $l\le x\le u$ and $\xi(x)=\infty$ otherwise, which is separable convex. Hence, $h$ is of the form (\ref{eq:value-function-assumed-convex-extensibility}) for the separable convex objective function $[x;s]\mapsto f(x)+\delta(s)+\xi(x)$, which shows that it is convex extensible by assumption. It is straightforwardly verified that the objective $v\mapsto g(\hat r+Mv)+h(-Cv)$ of (\ref{eq:generic-fixed-phase-value-function-reformulation}) is convex extensible. In addition, it is finite because $g,f$, and $\delta$ are finite, $\delta\ge0$, and the integral system $Ax+s=\hat b$, $l\le x\le u$ always has at least one feasible solution: $x=l$, $s=\hat b-Al$. Therefore, the objective is discrete convic and we can use the algorithm by Veselov et al.~\cite{DBLP:journals/dam/VeselovGZC20} to find an optimal solution $v$ to (\ref{eq:generic-fixed-phase-value-function-reformulation}). Since $v$ can be restricted to lie inside a ball of radius $\rho$, this requires $2^{\O(p^2\log p)}\log^{\O(1)}\rho$ arithmetic operations and $2^{\O(p^2\log p)}\log\rho$ evaluations of the constraints and comparisons of the objective of (\ref{eq:generic-fixed-phase-value-function-reformulation}).
    
    Evaluating the constraints $MVv\le w-V\hat r$ takes time $\O(\ell p)$. To implement the comparison oracle of the objective of (\ref{eq:generic-fixed-phase-value-function-reformulation}) we compute $\hat r+Mv$ and $-Cv$ in time $\O(dp)$ and compare $g(\hat r+Mv)+f(x)+\delta(s)$ for $[x;s]$ that satisfy $h(-Cv)=f(x)+\delta(s)$. Such $[x;s]$ is an optimal solution of the subproblem
    \[
        \min\bigl\{f(x)+\delta(s)\bigm\vert Ax+s=\hat b,\,l\le x\le u,\,x\in\Z^n,\,s\in\Z^m\bigr\}
    \]
    for $\hat b=(b+[-C\hat r;\veczero])+M[-Cv;0]$. Here, the constraints impose $\hat b-Al-s=A(x-l)$, which shows that we may impose the bounds $\|s-(b-Al)\|_\infty\le\|A\|_\infty\|u-l\|_\infty$, i.e., we may restrict $\hat b-Al-(U\|u-l\|_\infty)\cdot\vecone\le s\le\hat b-Al+(U\|u-l\|_\infty)\cdot\vecone$ where $\vecone$ is the all-ones vector. Thus, after computing $Al$ in time $T_{\mathrm{mul}}$ and adjusting the variable bounds of $s$ in time $\O(m)$, the optimizers $x,s$ can be found by solving a problem of the form (\ref{eq:generic-second-stage-ip}) for variable domains with a length of at most $\sigma=(1+2U)\|u-l\|_\infty$, in time $T_{\mathrm{opt}}((1+2U)\|u-l\|_\infty)$. Note that $x=Al$, $s=\hat b-Al$ is a trivial initial feasible solution to the subproblem and can be provided to the optimization oracle.

    In this way, we can obtain optimal solutions $v$ to (\ref{eq:generic-fixed-phase-value-function-reformulation}) for each guess of $\hat r$, of which the best yields the value of $y$ in an optimal solution to (\ref{eq:feasible-generic-2-stage-ip}). This can be completed to an optimal solution $[y;x]$ in a negligible amount of time by solving a single additional subproblem of the form (\ref{eq:generic-second-stage-ip}).
\end{proof}

Note that in order to apply \cref{lemma:fixed-phase-value-function-reformulation}, the exact value of $M$ must be provided. For our applications, the computations in the proofs of \cref{sec:intro-periodic-convexity} show that the value of $M$ can be computed within time $\O(M)$ when using a linear prime-number sieve to evaluate $\lcm\{1,2,\dots,N\}$~\cite{DBLP:journals/scp/Pritchard87}.

\subsection{Two-Stage Stochastic Integer Programs}

The algorithm for solving two-stage stochastic integer programs with constraint matrices of the form (\ref{eq:2-stage-matrix}) can now be obtained by applying the framework to a block-diagonal second-stage matrix $A$ while using the periodic convexity established in \cref{theorem:general-convex-extensibility}.

\theoremtwostagealgorithm*

\begin{proof}
    We apply \cref{lemma:fixed-phase-value-function-reformulation} with $d=tn$ to the constraint matrices $C\in\Z^{tn\times r}$ and $A\in\Z^{tn\times sn}$ given by
    \[
        C=\begin{bmatrix}
            C_1\\
            \vdots\\
            C_n
        \end{bmatrix},
        \quad\quad
        A=\begin{bmatrix}
            D_1&&\\
            &\ddots&\\
            &&D_n
        \end{bmatrix},
    \]
    while copying the right-hand side vector $b=[b_1;\dots;b_n]$, second-stage variable bounds $l=[l_1;\dots;l_n]$, $u=[u_1;\dots;u_n]$, and encoding $l_0\le x_0\le u_0$ in $Vx_0\le w$ with $\ell=2r$ constraints. The objective remains $g(x_0)+f'(x_1,\dots,x_n)=f(x_0,x_1,\dots,x_n)$.
    
    Since the value function in (\ref{eq:value-function-assumed-convex-extensibility}) is the sum of value functions for constraint matrices of the form $[D_i,I]\in\{-\Delta,-\Delta+1,\dots,\Delta\}^{t\times(t+s)}$, \cref{theorem:general-convex-extensibility} guarantees that (\ref{eq:value-function-assumed-convex-extensibility}) is convex extensible for $M=2^{\O((\sqrt t\Delta)^t)}$. We can compute a product $Ax$ in time $T_{\mathrm{mul}}=\O(nst)$ and can estimate $\|A\|_\infty\le s\Delta=:U$. An optimal solution to a subproblem of the form (\ref{eq:generic-second-stage-ip}) can be found by independently solving the $n$ subproblems of the diagonal $t\times(t+s)$ blocks $[D_i,I]$. Each such separable convex integer program can be solved using the dual treedepth algorithm from~\cite{DBLP:conf/ipco/HunkenschroderKLV25}, which runs in time $(t\Delta)^{\O(t^2)}\cdot s\log(s)\cdot\log\sigma$. This yields $T_{\mathrm{opt}}(\sigma)=n\cdot(t\Delta)^{\O(t^2)}\cdot s\log(s)\cdot\log\sigma$. Note that the feasible region of the global variables is contained in a ball of radius $\|u_0-l_0\|_\infty$, meaning that we can bound $\log\rho\le L$.

    From \cref{lemma:fixed-phase-value-function-reformulation}, we obtain the running time
    \begin{alignat*}{2}
        &\mathrlap{M^r\cdot2^{\O(r^2\log r)}\cdot\Bigl(\log^{\O(1)}(\rho)+\log(\rho)\cdot\bigl(T_{\mathrm{opt}}((1+2U)\|u-l\|_\infty)+T_{\mathrm{mul}}+tn+2r\bigr)\Bigr)}&&\\
        ={}&\bigl(2^{\O((\sqrt t\Delta)^t)}\bigr)^r\cdot{}&&2^{\O(r^2\log r)}\cdot\Bigl(L^{\O(1)}+L\cdot\bigl(n\cdot(t\Delta)^{\O(t^2)}\\
        &&&\cdot s\log(s)\cdot\log((1+2s\Delta)\|u-l\|_\infty)+\O(nst)+tn+2r\bigr)\Bigr)\\
        ={}&\mathrlap{2^{\O((\sqrt t\Delta)^t\cdot r)}\cdot2^{\O(r^2\log r)}\cdot\bigl(L^{\O(1)}+L\cdot n\cdot(t\Delta)^{\O(t^2)}\cdot s\log(s)\cdot(\log(s)+\log(\Delta)+L)\bigr)}&&\\
        ={}&\mathrlap{2^{\O((\sqrt t\Delta)^t\cdot r+r^2\log r)}\cdot s\log^{\O(1)}(s)\cdot n\cdot L^{\O(1)}}.&&\tag*{\qedhere}
    \end{alignat*}
\end{proof}
In the case of two-stage stochastic integer programs with second-stage constraint matrices $D_i$ that have $s=\Theta(t)$, this improves upon the triply exponential algorithm by Eisenbrand and Rothvoss~\cite{DBLP:conf/soda/EisenbrandR26}. We note that the parameterization of two-stage stochastic integer programs by the number $t$ of constraints of each brick instead of by the number $s$ of variables of each brick is unusual. However, in the arguments accumulating in \cref{theorem:general-convex-extensibility}, redundant linearly dependent constraints in $Ax^i=r+Mz^i$ may be discarded resulting in effectively $t\le s$ and a parametric running time dependency of $2^{\O((\sqrt s\Delta)^s)\cdot r+r^2\log r}$. This reduction may not be possible when the constraints are formulated as inequalities, which raises the question of whether value functions of the form $b\mapsto\min\{f(x)\ \vert\ Ax\le b,\,x\in\Z^n\}$ become convex extensible on $r+M\Z^m$ for some $M$ that only depends on the number of variables $n$ and coefficient size $\Delta$ of $A$. Here, the results of Eisenbrand and Rothvoss~\cite{DBLP:conf/soda/EisenbrandR26} imply that $M$ needs to be at most triply exponential in order for this to work for linear objectives $f$.

\subsection{\texorpdfstring{$n$}{n}-Fold Integer Programs}

We can also use a value function reformulation to derive an $n$-fold IP algorithm supporting large coefficients in the global constraints if they are uniform, i.e., $B_1,\dots,B_n$ in (\ref{eq:n-fold-matrix}) are all equal to some matrix $B\in\Z^{r\times t}$. Here, we make use of the following simple observation: an IP with constraints (\ref{eq:uniform-n-fold-matrix}) is equivalent to one with constraints (\ref{eq:reformulated-uniform-n-fold-matrix}) by introducing new variables $y\in\Z^t$ that aggregate $y=x_1+\dots+x_n$. Both formulations are shown below.
\[
    \begin{array}{ccc}
         & \multirow{2}{*}{$\quad\begin{bmatrix}
            B&&&\\
            -I&I&\cdots&I\\
            &D_1&&\\
            &&\ddots&\\
            &&&D_n
        \end{bmatrix}\begin{bmatrix}
            y\\
            x_1\\
            \vdots\\
            x_n
        \end{bmatrix}=\begin{bmatrix}
            b_0\\
            \veczero\\
            b_1\\
            \vdots\\
            b_n
        \end{bmatrix}$}\\
        \begin{bmatrix}
            B&\cdots&B\\
            D_1&&\\
            &\ddots&\\
            &&D_n
        \end{bmatrix}\begin{bmatrix}
            x_1\\
            \vdots\\
            x_n
        \end{bmatrix}=\begin{bmatrix}
            b_0\\
            b_1\\
            \vdots\\
            b_n
        \end{bmatrix}\quad & & \\
        \manualeqlabel{eq:uniform-n-fold-matrix}\quad & \quad\manualeqlabel{eq:reformulated-uniform-n-fold-matrix}
    \end{array}
\]
In this way, we can use a value function reformulation to transform the optimal objective value for the $n$-fold IP on the variables $x_1,\dots,x_n$ into an objective over $y$. Here, periodic convexity, limited to a $t$-dimensional interaction through the top rows of (\ref{eq:reformulated-uniform-n-fold-matrix}), can be established using \cref{lemma:stacked-scaled-ridp} and the machinery from \cref{sec:periodic-convexity}. We note that Chen, Chen, and Zhang~\cite{DBLP:journals/disopt/ChenCZ22} use the conversion from (\ref{eq:uniform-n-fold-matrix}) to (\ref{eq:reformulated-uniform-n-fold-matrix}) to rewrite a $4$-block IP when $D_i=[1,\dots,1]\in\Z^{1\times t}$, in which case the second stage problem simplifies significantly by becoming totally unimodular.

\theoremnfoldalgorithm*

\begin{proof}
    We apply \cref{lemma:fixed-phase-value-function-reformulation} to the $n$-fold matrix
    \[
        A=\begin{bmatrix}
            I&\cdots&I\\
            D_1&&\\
            &\ddots&\\
            &&D_n
        \end{bmatrix},
    \]
    the linking matrix $C=-I$, $d=t$, right-hand side vector $[\veczero;b_1;\dots;b_n]$, variable bounds $[l_1;\dots;l_n]$, $[u_1;\dots;u_n]$, $g=y\mapsto0$, and the original objective $f$ as second-stage objective. Note that any feasible solution $x$ in (\ref{eq:n-fold-ip}) satisfies $l_1+\dots+l_n\le y\le u_1+\dots+u_n$ for $y=x_1+\dots+x_n$. We can encode these bounds on $y$ and the global constraints $Bx_1+\dots+Bx_n=b_0=By$ in $Vy\le w$ with $2t+2r$ constraints. Note that $\{y\in\R^t:Vy\le w\}$ is contained in a ball of radius $\rho=\|\sum_{i\in[n]}u_i-\sum_{i\in[n]}l_i\|_\infty\le n\max_{i\in[n]}\|u_i-l_i\|_\infty$ and thus that $\log\rho\le\log(n)+L$.

    We now compute a period $M$ for which (\ref{eq:value-function-assumed-convex-extensibility}) is convex extensible by applying \cref{lemma:stacked-scaled-ridp,lemma:vector-rearrangement-with-insertion,lemma:rearrangement-to-convexity} to $d=t$ and the matrix $[A;I]$. We may observe that $[A;I]$ and the further extended $A'$ in the statement of \cref{lemma:vector-rearrangement-with-insertion} are both members of $\stacked(t,1,\O(s\Delta)^s)$. Hence, \cref{lemma:stacked-scaled-ridp} shows that $[A,I]$ has the $t$-rIDP after an $N$-dilation for $N=2^{\O(t\cdot\O(s\Delta)^s)^t}=2^{(\O(s\Delta)^s\cdot t)^t}$. Additionally, $A'$ has an $\ell_\infty$-Graver complexity bounded by $G=\O(ts\Delta)^{ts}$ as a result of the Graver complexity bound for $n$-fold matrices from~\cite{DBLP:conf/icalp/EisenbrandHK18}. Now, \cref{lemma:vector-rearrangement-with-insertion,lemma:rearrangement-to-convexity} show that we obtain our desired periodic convexity of (\ref{eq:value-function-assumed-convex-extensibility}) for a period $M=\O(G)^{t(t+1)}\cdot N=(\O(ts\Delta)^{ts})^{t(t+1)}\cdot2^{(\O(s\Delta)^s\cdot t)^t}=2^{(\O(s\Delta)^s\cdot t)^t}$.

    With respect to the remaining conditions in \cref{lemma:fixed-phase-value-function-reformulation}, we can estimate $\|A\|_\infty\le n+s\Delta=:U$, compute a product $Ax$ in time $T_{\mathrm{mul}}=\O(nst)$, and solve (\ref{eq:generic-second-stage-ip}) using the dual treedepth algorithm from~\cite{DBLP:conf/ipco/HunkenschroderKLV25}, which runs in time $T_{\mathrm{opt}}(\sigma)=(ts\Delta)^{\O(ts(t+s))}\cdot tn\log(tn)\cdot\log\sigma$.

    Now, \cref{lemma:fixed-phase-value-function-reformulation} shows that we can solve (\ref{eq:n-fold-ip}) in time
    \begin{alignat*}{2}
        &M^t\cdot{}&&2^{\O(t^2\log t)}\cdot\Bigl(\log^{\O(1)}(\rho)+\log(\rho)\\
        &&&\cdot\bigl(T_{\mathrm{opt}}((1+2U)\|u-l\|_\infty)+T_{\mathrm{mul}}+(t+sn)+(2t+2r)\bigr)\Bigr)\\
        ={}&\mathrlap{\bigl(2^{(\O(s\Delta)^s\cdot t)^t}\bigr)^t\cdot2^{\O(t^2\log t)}\cdot\Bigl((\log(n)+L)^{\O(1)}+(\log(n)+L)\cdot\bigl((ts\Delta)^{\O(ts(t+s))}}&&\\
        &&&\cdot tn\log(tn)\cdot\log((1+2(n+t\Delta))\|u-l\|_\infty)+\O(nst)+sn+3t+2r\bigr)\Bigr)\\
        ={}&\mathrlap{2^{(\O(s\Delta)^s\cdot t)^t}\cdot\Bigl(\log^{\O(1)}(n)\cdot L^{\O(1)}\cdot\bigl((ts\Delta)^{\O(ts(t+s))}\cdot tn\log(tn)\cdot(\log(n+t\Delta)+L)+r\bigr)\Bigr)}&&\\
        ={}&\mathrlap{2^{(\O(s\Delta)^s\cdot t)^t}\cdot r\cdot n\log^{\O(1)}(n)\cdot L^{\O(1)}}.&&\tag*{\qedhere}
    \end{alignat*}
\end{proof}

A useful consequence of reducing to $n$-fold IP with small coefficients is that the fixed-parameter tractability in the presence of large coefficients extends to the setting where the $n$-fold IP is given in a high-multiplicity encoding: Knop et al.~\cite{DBLP:journals/mp/KnopKLMO23} provide an algorithm that efficiently minimizes compactly encoded $n$-fold IPs in the case that all coefficients are small. Note that a high-multiplicity variant of the two-stage algorithm is not interesting because it can trivially be implemented by attaching weights to the value functions of identical subproblems.

In \cref{theorem:n-fold-algorithm}, we parameterize by the number of columns of $B$ and assume uniformity of the global constraints to match the parameterization used in~\cite{DBLP:journals/theoretics/CslovjecsekKLPP25}. However, we can also treat the more common parameterization by $r$ and $s$ from the literature~\cite{DBLP:conf/soda/CslovjecsekEHRW21,DBLP:conf/icalp/EisenbrandHK18}, which will be a special case of the following discussion.

A well-studied generalization of $n$-fold IPs are IPs defined by constraint matrices of bounded \emph{dual treedepth}, which corresponds to recursive block-structure. A matrix has a dual treedepth of at most $d+1$ if it can be expressed in the form (\ref{eq:n-fold-matrix}) where $[B_1,\dots,B_n]$ is one row and each of $D_1,\dots,D_n$ has a dual tree depth of at most $d$, starting at the empty matrix with a dual tree depth of $0$. Solving IPs of bounded dual treedepth with bounded coefficients is known to be FPT~\cite{DBLP:conf/soda/CslovjecsekEHRW21,DBLP:conf/icalp/EisenbrandHK18,DBLP:journals/mor/EisenbrandHKKLO25,eisenbrand2022algorithmictheoryintegerprogramming,DBLP:conf/ipco/HunkenschroderKLV25}. Using rewriting techniques similar to those used in the conversion from (\ref{eq:uniform-n-fold-matrix}) to (\ref{eq:reformulated-uniform-n-fold-matrix}), we show that IPs of bounded dual treedepth with small coefficients are still efficiently solvable under the presence of a fixed number of additional constraints containing a fixed number of, possibly large, coefficients. This yields \cref{theorem:treedepth-algorithm}. Here, we exploit the fact that the $\ell_1$-Graver complexity of a matrix with dual treedepth $d$ and coefficients bounded by $\Delta$ is at most $\O(\Delta)^{2^d}$~\cite{DBLP:journals/toct/KnopPW20}.

\begin{theorem}
    \label{theorem:treedepth-algorithm}
    Let $\{k_1,\dots,k_\ell\}=K\subseteq\Z$, $B\in K^{r\times n}$, $D\in\{-\Delta,-\Delta+1,\dots,\Delta\}^{m\times n}$, $l,u\in\Z^n$, and $b^B\in\Z^r$, $b^D\in\Z^m$. Assume that $D$ has dual treedepth $d$. Let $f\colon\R^n\to\R$ be separable convex and accessible through a comparison oracle on $\Z^n$. An optimal solution to the integer program
    \begin{equation}
        \label{eq:treedepth-ip}
        \min\bigl\{f(x)\bigm\vert Bx=b^B,\,Dx=b^D,\,l\le x\le u,\,x\in\Z^n\bigr\}
    \end{equation}
    can be found in time $2^{\O(\Delta)^{2^d\cdot\ell r}\cdot(\ell r)^{\ell r}}\cdot n\log^{\O(1)}(n)\cdot L^{\O(1)}$, where $L=\|u-l\|_\infty$.
\end{theorem}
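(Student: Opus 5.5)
The plan is to reduce to \cref{lemma:fixed-phase-value-function-reformulation}, following the $n$-fold proof (\cref{theorem:n-fold-algorithm}). The only large coefficients are the $\ell$ distinct values $k_1,\dots,k_\ell$ in $B$, so we aggregate along them. For each $j\in[\ell]$, let $E_j\in\{0,1\}^{r\times n}$ be the indicator matrix of the entries of $B$ equal to $k_j$. Then $B=\sum_{j\in[\ell]}k_jE_j$.

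We introduce first-stage variables $y=[y^1;\dots;y^\ell]\in\Z^{\ell r}$ together with the linking constraints $E_jx-y^j=\veczero$ for $j\in[\ell]$. The global constraint becomes $\sum_j k_jy^j=b^B$, which depends on $y$ only and can be put into $Vy\le w$. We also add the box bounds on $y$ implied by $l\le x\le u$. Each coordinate of $y^j$ is a sum of at most $n$ coordinates of $x$, so the feasible region of $y$ lies in a ball of radius $\rho\le n\cdot\|u-l\|_\infty$, giving $\log\rho\le\log n+\log L$.

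The second-stage matrix is
\[
A=\begin{bmatrix}E\\ D\end{bmatrix},\qquad E=[E_1;\dots;E_\ell]\in\{0,1\}^{\ell r\times n},
\]
with linking matrix $C=-I$ and $p=\ell r$ rows of interaction. The key structural observation is that $A$, and also $[A,I]$ and the augmented $A'$ from \cref{lemma:vector-rearrangement-with-insertion}, has dual treedepth at most $d+\ell r$ and coefficients bounded by $\max(\Delta,1)$. The reason is that adding one row, or appending identity columns, increases dual treedepth by at most one per added row.

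For the periodic convexity, consider $[A,I]$ written as $\bigl[\begin{smallmatrix}E&I'\\ D&I''\end{smallmatrix}\bigr]$. This lies in $\stacked(\ell r,1,U)$ with $U$ the $\ell_1$-Graver complexity of $[D,I]$. Since $[D,I]$ still has dual treedepth $d$, the bound from~\cite{DBLP:journals/toct/KnopPW20} gives $U=\O(\Delta)^{2^d}$. \cref{lemma:stacked-scaled-ridp} then yields the $\ell r$-rIDP after an $N$-dilation with $N=2^{\O(\ell r\,\O(\Delta)^{2^d})^{\ell r}}$. For $A'$, dual treedepth at most $d+\ell r$ gives $G\le\O(\Delta)^{2^{d+\ell r}}$. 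This is the step I expect to be the main obstacle: it is too large for the claimed bound. To avoid the factor $2^{\ell r}$ in the exponent, I will bound $g_\infty(A')$ directly using the argument of~\cite{DBLP:conf/icalp/EisenbrandHK18} with the Steinitz lemma on the top $\ell r$ rows over Graver elements of $[D,I]$. That argument gives $G\le(\ell rU)^{\O(\ell r)}$. \cref{lemma:vector-rearrangement-with-insertion,lemma:rearrangement-to-convexity} then give a period
\[
M=\O(G)^{\ell r(\ell r+1)}\cdot N=2^{\O(\Delta)^{2^d\ell r}(\ell r)^{\ell r}},
\]
after absorbing constants into the exponent.

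The remaining hypotheses of \cref{lemma:fixed-phase-value-function-reformulation} are checked as follows. We have $\|A\|_\infty\le n+\Delta n=:U'$, which only enters logarithmically. The product $Ax$ can be computed in $T_{\mathrm{mul}}=\O(n(\ell r+m))$; we may assume $m\le n$ after removing dependent rows, or handle it via sparsity. The subproblem (\ref{eq:generic-second-stage-ip}) is solved with the dual treedepth algorithm of~\cite{DBLP:conf/ipco/HunkenschroderKLV25} for dual treedepth $d+\ell r$ and coefficients $\max(\Delta,1)$. Its parametric factor is dominated by $M^{\ell r}$, or can be replaced by the sharper bound through the same Graver estimate, and its dependence on the input is $n\log n\cdot\log\sigma$. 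Plugging $M^{p}=M^{\ell r}$, $2^{\O(p^2\log p)}$, and $\log\rho\le\log n+\log L$ into the running time formula of \cref{lemma:fixed-phase-value-function-reformulation} gives the stated bound $2^{\O(\Delta)^{2^d\cdot\ell r}\cdot(\ell r)^{\ell r}}\cdot n\log^{\O(1)}(n)\cdot L^{\O(1)}$.
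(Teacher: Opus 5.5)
Your proposal is correct and follows the paper's proof. It uses the same aggregation variables $y_{qi}$, the same second-stage matrix $A=[Z;D]$ with $C=-I$, and the same application of \cref{lemma:stacked-scaled-ridp} with $\ell r$ top rows, coefficients at most $1$ and $U=\O(\Delta)^{2^d}$, followed by \cref{lemma:vector-rearrangement-with-insertion,lemma:rearrangement-to-convexity} and \cref{lemma:fixed-phase-value-function-reformulation} with the solver of~\cite{DBLP:conf/ipco/HunkenschroderKLV25}.

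The step you flag as the main obstacle is not one, and the paper simply uses $G\le\O(\Delta)^{2^{\ell r+d}}$. This $G$ enters only as $\O(G)^{\ell r(\ell r+1)}=2^{\O(2^{\ell r+d}(\ell r)^2\log\Delta)}$. Since $\ell r+d\le 2^d\ell r$, this factor is absorbed into $2^{\O(\Delta)^{2^d\ell r}(\ell r)^{\ell r}}$. That is exactly the reasoning by which you correctly absorb the solver's $\Delta^{\O((\ell r+d)2^{\ell r+d})}$ factor. Your Steinitz-based bound $G\le(\ell rU)^{\O(\ell r)}$, in the style of~\cite{DBLP:conf/icalp/EisenbrandHK18}, is also valid and gives a smaller $M$. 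It does not change the final running time, which is dominated by $N$.

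One small fix: removing linearly dependent rows to get $m\le n$ is not a near-linear-time step. Instead, as the paper does, bound the number of nonzero rows of $D$ by $nd$. This holds because each column of $D$ has at most $d$ nonzeros, since they form a clique in the dual graph.
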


\begin{proof}    
    We replace $Bx=b^B$ by introducing $\ell r$ new variables $y_{qi}$ for $(q,i)\in[\ell]\times[r]$, enforcing that
    \begin{equation}
        \label{eq:y-aggregation}
        y_{qi}=\sum_{j\in[n]:B_{ij}=k_q}x_j
    \end{equation}
    and instead imposing constraints on $y$. For this purpose, define the $(\ell r)\times n$ matrix $Z$ by setting the coefficient in row $(q,i)\in[\ell]\times[r]$ and column $j\in[n]$ to $1$ if $B_{ij}=k_q$ and $0$ otherwise. We build the matrix
    \[
        A=\begin{bmatrix}
            Z\\
            D
        \end{bmatrix}\in\Z^{(\ell r+m)\times n}.
    \]
    In this way, $[-Iy;\veczero]+Ax=[\veczero;b^D]$ correctly encodes (\ref{eq:y-aggregation}) and $Dx=b^D$. When (\ref{eq:y-aggregation}) holds, it follows that
    \[
        b_i^B=\sum_{q\in[\ell]}k_qy_{qi}\iff b_i^B=\sum_{q\in[\ell]}k_q\Bigl(\sum_{j\in[n]:B_{ij}=k_q}x_j\Bigr)=\sum_{j\in[n]}B_{ij}x_j
    \]
    for $i\in[r]$. Hence, we can cast (\ref{eq:treedepth-ip}) as an instance of (\ref{eq:generic-2-stage-ip}) for $C=-I$, $d'=\ell r$, our stacked matrix $A$, variable bounds $l,u$, $g=y\mapsto0$, the original objective $f$, right-hand side vector $b=[\veczero;b^D]$, and $2(1+\ell)r$ constraints in $Vy\le w$ given by
    \begin{alignat*}{2}
        &\sum_{q\in[\ell]}k_qy_{qi}=b_i^B,&i&\in[r],\\
        &\sum_{j\in[n]}l_j\le z_{qi}\le\sum_{j\in[n]}u_j,&\quad(q,i)&\in[\ell]\times[r].
    \end{alignat*}
    The feasible region for $y$ is contained in a ball of radius $\rho=n\|u-l\|_\infty$, yielding $\log\rho=\log(n)+L$.

    We now compute a period $M$ for which (\ref{eq:value-function-assumed-convex-extensibility}) is convex extensible by applying \cref{lemma:stacked-scaled-ridp,lemma:vector-rearrangement-with-insertion,lemma:rearrangement-to-convexity} to $d'=\ell r$ and the matrix $[A;I]$. We may observe that $[A;I]$ and the further extended $A'$ in the statement of \cref{lemma:vector-rearrangement-with-insertion} are both constraint matrices with a dual treedepth of at most $\ell r+d$ and coefficients bounded by $\Delta$. Hence, we can bound $G\le g_\infty(A')\le g_1(A')\le\O(\Delta)^{2^{\ell r+d}}$~\cite{DBLP:journals/toct/KnopPW20}. Using the same $\ell_1$-Graver complexity bound on $g_1(D)$, we derive that $A\in\stacked(\ell r,1,\O(\Delta)^{2^d})$. Consequently, \cref{lemma:stacked-scaled-ridp} shows that $[A,I]$ has the $\ell r$-rIDP after an $N$-dilation for $N=2^{\O(\ell r\cdot\O(\Delta)^{2^d})^{\ell r}}$. Now, \cref{lemma:vector-rearrangement-with-insertion,lemma:rearrangement-to-convexity} show that we obtain our desired periodic convexity of (\ref{eq:value-function-assumed-convex-extensibility}) for a period
    \begin{align*}
        M&=\O\bigl(\O(\Delta)^{2^{\ell r+d}}\bigr)^{\ell r(\ell r+1)}\cdot2^{\O(\ell r\cdot\O(\Delta)^{2^d})^{\ell r}}=\Delta^{\O(2^{\ell r+d}\cdot\ell^2r^2)}\cdot2^{\O(\Delta)^{2^d\cdot\ell r}\cdot(\ell r)^{\ell r}}\\
        &=2^{\O(2^{\ell r+d}\cdot\ell^2r^2\log\Delta)+\O(\Delta)^{2^d\cdot\ell r}\cdot(\ell r)^{\ell r}}=2^{\O(\Delta)^{2^d\cdot\ell r}\cdot(\ell r)^{\ell r}}.
    \end{align*}

    With respect to the remaining conditions in \cref{lemma:fixed-phase-value-function-reformulation}, we can estimate $\|A\|_\infty\le n\Delta=:U$, compute a product $Ax$ in time $T_{\mathrm{mul}}=\O(n(\ell r+d))$, and solve (\ref{eq:generic-second-stage-ip}) using the dual treedepth algorithm from~\cite{DBLP:conf/ipco/HunkenschroderKLV25}, which runs in time $T_{\mathrm{opt}}(\sigma)=\Delta^{\O((\ell r+d)\cdot2^{\ell r+d})}\cdot n\log(n)\cdot\log\sigma$. Note that the number of nonzero rows of $D$ is at most $nd$, which effectively bounds $m\le nd$.
    
    Now, \cref{lemma:fixed-phase-value-function-reformulation} shows that we can solve (\ref{eq:treedepth-ip}) in time
    \begin{alignat*}{2}
        &\mathrlap{M^{\ell r}\cdot2^{\O((\ell r)^2\log(\ell r))}\cdot\Bigl(\log^{\O(1)}(\rho)+\log(\rho)}&&\\
        &&&\cdot\bigl(T_{\mathrm{opt}}((1+2U)\|u-l\|_\infty)+T_{\mathrm{mul}}+(\ell r+nd)+2(1+\ell)r\bigr)\Bigr)\\
        ={}&\bigl(&&2^{\O(\Delta)^{2^d\cdot\ell r}\cdot(\ell r)^{\ell r}}\bigr)^{\ell r}\cdot2^{\O((\ell r)^2\log(\ell r))}\cdot\Bigl((\log(n)+L)^{\O(1)}+(\log(n)+L)\\
        &&&\cdot\bigl(\Delta^{\O((\ell r+d)\cdot2^{\ell r+d})}\cdot n\log(n)\cdot\log((1+2n\Delta)\|u-l\|_\infty)+\O(n(\ell r+d))\bigr)\Bigr)\\
        ={}&\mathrlap{2^{\O(\Delta)^{2^d\cdot\ell r}\cdot(\ell r)^{\ell r}}\cdot\bigl(\log^{\O(1)}(n)\cdot L^{\O(1)}}&&\\
        &&&\cdot\Delta^{\O((\ell r+d)\cdot2^{\ell r+d})}\cdot n\log(n)\cdot(\log(n)+\log(\Delta)+L)\bigr)\\
        ={}&\mathrlap{2^{\O(\Delta)^{2^d\cdot\ell r}\cdot(\ell r)^{\ell r}}\cdot n\log^{\O(1)}(n)\cdot\log^{\O(1)}\|u-l\|_\infty.}&&\tag*{\qedhere}
    \end{alignat*}
\end{proof}
Note that the arbitrary coefficients $K$ must necessarily be restricted to a bounded number of rows $r$, because $n$-fold IP becomes NP-hard even when only one single large coefficient is distributed arbitrarily throughout the $n$-fold constraint structure~\cite{DBLP:journals/disopt/ChenCZ22}.

%% file: sections/future-directions.tex
\section{Future Directions}

We have shown that the value function of an integer program with separable convex objectives is periodically convex. That is, $b\mapsto\min\{f(x)\ \vert\ Ax=b,\,x\in\Z^n\}$ is convex extensible on any lattice translate $r+M\Z^m$ for any $r\in\Z^m$ and some number $M$ depending only on $A$. We have also given improved parameterized bounds when $A$ is block-structured and the variations of $b$ have fixed low support. Finally, we have used the periodic convexity in combination with value function reformulations of block-structured IPs to derive FPT algorithms for block-structured IPs that can simultaneously handle large entries in the global part of the constraint matrix and separable convex objective functions.

Despite improving significantly on the triply exponential running time of the $n$-fold IP algorithm by Cslovjecsek et al.~\cite{DBLP:journals/theoretics/CslovjecsekKLPP25}, the algorithms of \cref{theorem:n-fold-algorithm} and \cref{theorem:treedepth-algorithm} still have running times that contain an exponent tower of one height higher than their lower bounds~\cite{DBLP:journals/toct/KnopPW20} and their small-coefficient algorithm counterparts~\cite{DBLP:conf/soda/CslovjecsekEHRW21,DBLP:conf/icalp/EisenbrandHK18,DBLP:journals/mor/EisenbrandHKKLO25,eisenbrand2022algorithmictheoryintegerprogramming,DBLP:conf/ipco/HunkenschroderKLV25}. A natural question is whether this exponential time complexity increase can be avoided. If this is possible, it is clear that a different approach must be used: our running time for $n$-fold IPs is at least the value of $M$ in \cref{theorem:general-convex-extensibility}, which must be at least doubly exponential in $m$.

An important open problem that remains is the unknown parameterized complexity of the 4-block integer linear programming problem. \cref{proposition:n-fold-scaled-idp,proposition:idp-iff-convex-extensibility-along-line} show that value functions of $n$-fold IPs are periodically convex on $1$-dimensional affine subspaces. If this can be extended to any fixed number of dimensions, this would yield a straightforward FPT algorithm through a fixed phase value function reformulation. To show such periodic convexity, different techniques from those used in \cref{lemma:reduction-to-bounded-box,lemma:vector-rearrangement-with-insertion} are necessary because we rely on variations coming from $\Z^d\times\{\veczero\}$ where $d$ is bounded. Given the applications to value function reformulations, overcoming these dimension-related limitations for any sort of structured constraint matrix with many rows would be of interest.

It is remarkable that the bound on $M$ in \cref{theorem:general-convex-extensibility} is dominated by the dilation needed to establish the IDP from \cref{lemma:general-scaled-idp}, i.e., periodic convexity on $1$-dimensional subspaces. It would be interesting to determine whether there is a natural class of IPs for which the IDP is attained after an $M$-dilation for an $M$ that is much smaller than the smallest period for which periodic convexity occurs on the complete space of right-hand sides.

%% file: sections/appendix-n-fold-idp.tex
\section{A Parameterized Bound on a Dilation That Establishes the IDP For \texorpdfstring{$n$}{n}-Fold Matrices}
\label{appendix:n-fold-idp}

Here, we show that $n$-fold constraint matrices have the IDP after scaling with a constant depending only on the number of global constraints $r$, number of local constraints $s$, and coefficient size $\Delta$.

\propositionnfoldscaledidp*

We use the same strategy as in the proof of \cref{lemma:stacked-scaled-ridp}. In particular, we will show an analogue of \cref{claim:align-stacked} where $b\in\Z^m$ covers the full space of possible right-hand sides. For this, we use a number of newer tools from~\cite{DBLP:journals/theoretics/CslovjecsekKLPP25} and~\cite{Barany+2024+261+267}, which we spell out first. \cref{lemma:solution-decomposition,lemma:faithful-decomposition} provide a way to uniformly decompose an integer linear system $Ax=b$, $x\in\Z_{\ge0}^n$ into a ``sum of'' systems with bounded right-hand side vectors and solutions.

\begin{lemma}[Lemma 5.3 in~\cite{DBLP:journals/theoretics/CslovjecsekKLPP25}]
    Let $m,\Delta,\Xi\in\Z_{\ge0}$. There exists a bound $\eta=\O(m(\Delta+\Xi))^m$ so that the following holds: let $A\in\{-\Delta,-\Delta+1,\dots,\Delta\}^{m\times n}$, $b\in\{-\Xi,-\Xi+1,\dots,\Xi\}^m$, and $x\in\Z_{\ge0}^n$ be such that $Ax=b$. Then there is a solution $\underline x\in\Z_{\ge0}^n$ to $A\underline x=b$, bounded by $\|\underline x\|_1\le\eta$, and nonnegative Graver basis elements $g^1,\dots,g^\ell\in\graverbasis(A)\cap\Z_{\ge0}^n$ such that $x=\underline x+g^1+\dots+g^\ell$.\footnote{The statement of Lemma 5.3 in~\cite{DBLP:journals/theoretics/CslovjecsekKLPP25} only states a bound on the $\ell_\infty$-norm of the base solution $\underline x$. However, the provided proof actually yields a stronger $\ell_1$-norm bound.}
    \label{lemma:solution-decomposition}
\end{lemma}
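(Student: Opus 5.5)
The plan is to strip off nonnegative Graver basis elements of $A$ from $x$ one at a time, as long as doing so leaves a nonnegative solution that is still large in some norm. Concretely, I would use the standard argument that bounds $\ell_1$-norm-minimal solutions of $Ax=b$ (as in the proximity and Graver bounds of~\cite{DBLP:conf/icalp/EisenbrandHK18}). Repeatedly subtracting elements of $\graverbasis(A)\cap\Z_{\ge0}^n$ with $g\le x$ must terminate, since $\|x\|_1$ strictly decreases. The statement therefore reduces to one claim: if $x\in\Z_{\ge0}^n$ satisfies $Ax=b$ and no nonzero $y\in\Z_{\ge0}^n$ with $y\le x$ and $Ay=\veczero$ exists, then $\|x\|_1\le\eta$. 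Once the claim holds, any such nonzero kernel vector $y\le x$ decomposes conformally (hence nonnegatively, with each piece $\le x$) into Graver elements, and we may subtract one of them. At termination the residual $\underline x$ satisfies the claim's hypothesis and so has $\|\underline x\|_1\le\eta$.

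To prove the claim, I write $x$ as a sequence of $\|x\|_1$ unit column vectors $A_j$, taking column $j$ repeated $x_j$ times. Each has $\ell_\infty$-norm at most $\Delta$, and together they sum to $b$. I then apply \cref{lemma:non-homogeneous-steinitz-lemma} with the $\ell_\infty$-norm in dimension $m$. This gives an ordering whose prefix sums stay within distance $2m\Delta$ of the segment $\{\tfrac{k}{\|x\|_1}b\}$. Since $\|b\|_\infty\le\Xi$, every prefix sum lies in the integer box $\{v\in\Z^m:\|v\|_\infty\le\Xi+2m\Delta\}$. This box has at most $(2\Xi+4m\Delta+1)^m=\O(m(\Delta+\Xi))^m$ points.

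If $\|x\|_1$ exceeds this count, two distinct prefixes $k_1<k_2$ have equal prefix sums by pigeonhole. The block of unit vectors between them yields a nonzero $y\in\Z_{\ge0}^n$ with $y\le x$ and $Ay=\veczero$. This contradicts the claim's hypothesis, so $\eta:=(2\Xi+4m\Delta+1)^m$ works.

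The main obstacle is making sure the Steinitz step gives a box bound independent of $\|x\|_1$. Using the target $\tfrac{k}{\|x\|_1}b$ only adds a drift of at most $\|b\|_\infty\le\Xi$ in $\ell_\infty$, and integrality of the prefix sums then makes the counting argument work.
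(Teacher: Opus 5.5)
Your proof is correct. Greedily peeling off nonnegative Graver elements reduces the lemma to bounding $\|\underline x\|_1$ for a solution that has no nonzero nonnegative kernel vector below it. The Steinitz ordering of its $\|\underline x\|_1$ columns, plus pigeonhole on the box of $(2\Xi+4m\Delta+1)^m$ integer prefix sums, then gives exactly $\eta=\O(m(\Delta+\Xi))^m$. The paper does not reprove the lemma; it cites Lemma 5.3 of \cite{DBLP:journals/theoretics/CslovjecsekKLPP25}, and the footnote says that proof yields this $\ell_1$-bound, which is consistent with it being the same Steinitz-and-pigeonhole argument as yours.
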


\begin{lemma}[Lemma 5.8 in~\cite{DBLP:journals/theoretics/CslovjecsekKLPP25}]
    \label{lemma:faithful-decomposition}
    Let $m,\Delta\in\Z_{\ge0}$. There exists a bound $\Xi=2^{(m\Delta)^{\O(m)}}$ so that the following holds: let $A\in\{-\Delta,-\Delta+1,\dots,\Delta\}^{m\times n}$ and $b\in\Z^m$. Then there exists a decomposition of $b$ into $b=b^1+\dots+b^\ell$ with $b^j\in\Z^m$, $b^j\sqsubseteq b$, $\|b^j\|_\infty\le\Xi$ for $j\in[\ell]$ so that any solution $x\in\Z_{\ge0}^n$ to $Ax=b$ can be decomposed into $x=y^1+\dots+y^\ell$ with $Ay^j=b^j$, $y^j\in\Z_{\ge0}^n$ for $j\in[\ell]$.
\end{lemma}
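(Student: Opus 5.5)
The plan is to first strip away the matrix $A$ and reduce the statement to a purely combinatorial statement about bounded vectors in a single orthant. After that, we build the decomposition of $b$ greedily, peeling off one bounded piece at a time that every solution is forced to contain.

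\textbf{Step 1 (reduction to bounded pieces).} Given $x\in\Z_{\ge0}^n$ with $Ax=b$, the vector $[x;-b]$ lies in the integer kernel of $[A,I]$. Decomposing it into conformal Graver basis elements gives $x=y^1+\dots+y^k$ and $b=w^1+\dots+w^k$. These satisfy $y^i\in\Z_{\ge0}^n$, $Ay^i=w^i$, $w^i\sqsubseteq b$ and $\|w^i\|_\infty\le G:=g_\infty([A,I])=\O(m\Delta)^m$. Hence every solution induces a multiset of vectors from the finite set $F_b=\{w\in\Z^m:w\sqsubseteq b,\ \|w\|_\infty\le G\}$ summing to $b$. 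Any regrouping of this multiset into sub-multisets summing to $b^1,\dots,b^\ell$ yields the required $y^j$, obtained by summing the corresponding $y^i$. Flipping signs of coordinates, we may assume $b\ge\veczero$ and that all pieces are nonnegative. It therefore suffices to prove the following, where $A$ no longer appears: there is a fixed decomposition $b=b^1+\dots+b^\ell$ with $\veczero\le b^j$ and $\|b^j\|_\infty\le\Xi$ such that every multiset from $F=\{w\in\Z_{\ge0}^m:\|w\|_\infty\le G\}$ summing to $b$ can be partitioned into sub-multisets with sums $b^1,\dots,b^\ell$. Since all pieces are nonnegative, the condition $b^j\sqsubseteq b$ is automatic.

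\textbf{Step 2 (peeling one forced piece).} We argue by induction on $\|b\|_1$. If $\|b\|_\infty\le\Xi$, take $\ell=1$. Otherwise, it suffices to find a single vector $\veczero\ne b'\le b$ with $\|b'\|_\infty\le\Xi$ that is \emph{forced}: every representation of $b$ by elements of $F$ contains a sub-multiset summing to $b'$. The remainder is then a representation of $b-b'$, and the induction hypothesis applies to $b-b'$.

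To find $b'$, let $S=\{i:b_i>T\}$ be the set of large coordinates for a threshold $T$ to be chosen. In any representation, pieces are bounded, so the pieces touching small coordinates number at most $mT$. Consequently, many pieces are supported only on $S$, and their sum equals $b|_S$ up to an error of $\O(mTG)$ in $\ell_\infty$-norm. Among the pieces supported on $S$, the pigeonhole principle gives some $w\in F$ with $w|_{[m]\setminus S}=\veczero$ that occurs at least $N:=\lcm\{1,\dots,G\}\cdot\mathrm{poly}$ times. The natural candidate forced vectors are multiples $N\cdot e$ along vectors $e$ supported on $S$. However, which $w$ appears depends on the representation, so a single choice of $b'$ must work against every $w$ that can occur.

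\textbf{Step 3 (main obstacle).} The main obstacle is exactly this uniformity: making $b'$ independent of the representation. My first attempt is a Steinitz-type argument combined with the semigroup structure of $F$ restricted to $S$. Order the pieces so that prefix sums stay within $\O(mG)$ of the segment $[\veczero,b]$, as in \cref{lemma:non-homogeneous-steinitz-lemma}. Then use a Bárány-type result on integer points in cones generated by $F$ to show the following: every integer point deep inside the cone $\mathrm{cone}(F|_S)\cap(b\text{-direction})$ with a fixed residue modulo $\lcm$ is representable by every ordering. Choosing $b'$ as the smallest such deep point in direction $b|_S$, rounded to the lattice generated by $F|_S$, should make it forced. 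Its size is then $\Xi=2^{(m\Delta)^{\O(m)}}$: one exponential comes from $G$, and a second from the lcm and the depth. When this only works after partially fixing coordinates, I would recurse on $|S|$, giving a bound whose exponent grows by a factor of at most $m$ per level. Finally, the threshold $T$ and the lcm factor are chosen consistently, so that $T\ge\Xi$ guarantees $b'\le b$.
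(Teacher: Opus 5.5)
Note that the paper does not prove this lemma; it imports it from \cite{DBLP:journals/theoretics/CslovjecsekKLPP25}. Judged on its own, your proposal has a genuine gap.

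Steps~1 and~2 are logically valid reductions. The conformal Graver decomposition of $[x;-b]$ with respect to $[A,I]$ does turn any regrouping of the pieces $w^i$ into a decomposition of $x$. Peeling off one \emph{forced} bounded piece and inducting on $\|b\|_1$ is a legitimate strategy. But the existence of such a forced piece is the whole content of the lemma, and Step~3 does not establish it; it only says what you hope is true (``should make it forced'', ``I would recurse''). The proposed ingredients also do not address the uniformity problem you correctly isolate:
\begin{itemize}
\item A Steinitz reordering controls the prefix sums of \emph{one} representation. It says nothing about which vectors are sub-sums of \emph{every} representation.
\item Whether a vector is a sub-multiset sum of a given representation depends on the cone and lattice generated by the pieces that representation actually uses. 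These can be a thin cone and a sublattice of large index.
\item By contrast, $\mathrm{cone}(F|_S)$ is the whole orthant, and the lattice generated by $F|_S$ is all of $\Z^S$ because $F$ contains the unit vectors. So ``deep in $\mathrm{cone}(F|_S)$, rounded to the lattice of $F|_S$'' imposes essentially no condition.
\end{itemize}

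Worse, Step~1 makes the claimed bound unattainable, so no completion of Step~3 can give $\Xi=2^{(m\Delta)^{\O(m)}}$. Here is a counterexample to the $A$-free statement with that bound.
\begin{itemize}
\item Take $b=N\vecone$. Its representation by $N$ copies of $\vecone$ forces every piece to be of the form $k\vecone$.
\item Fix $q\le G/(4m)$ and $t<q^m$ coprime to $S=1+q+\dots+q^{m-1}$. The lattice $\Lambda_t=\{x\in\Z^m:\sum_iq^{i-1}x_i\equiv0\pmod t\}$ has a basis $u^1,\dots,u^m$ of $\ell_\infty$-norm at most $q$: the vectors $qe_i-e_{i+1}$ for $i<m$, together with the base-$q$ digit vector of $t$.
\item Pick $c\in\Lambda_t$ with $\tfrac G2\vecone-c=\sum_i\mu_iu^i$ and $\mu_i\in[0,1)$. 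Then the vectors $c$, $c\pm u^i$, $c\pm mu^i$ lie in $F$, generate $\Lambda_t$, and their cone contains $\vecone$ in its interior.
\item Hence, for $N$ a large common multiple of all such $t$, there is a representation of $b$ whose sub-sums all lie in $\Lambda_t$.
\item Since $k\vecone\in\Lambda_t$ if and only if $t\mid kS$, every nonzero piece has $k$ divisible by all these $t$. The product of the primes below $q^m$ not dividing $S<q^m$ is $2^{\Omega(q^m)}$, so $\Xi\ge2^{\Omega(q^m)}$.
\item $G$ can be at least $\Delta^m$. Take $A$ with rows $e_i^\top-\Delta e_{i+1}^\top$ and $y=(0,\Delta^{m-1},\dots,\Delta,1)$; then $[y;\Delta^me_1]\in\graverbasis([A,I])$.
\end{itemize}
So your $A$-free statement needs $\Xi\ge2^{\Omega(\lfloor\Delta^m/(4m)\rfloor^m)}$, which is not of the form $2^{(m\Delta)^{\O(m)}}$.

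What you discard is exactly the structure of $A$. Sub-solutions $y\le x$ of an actual solution have right-hand sides in the lattice spanned by the columns of $A$ in the support of $x$. That lattice's index within its span is bounded by a subdeterminant of $A$, hence by $(\sqrt m\Delta)^m$. Lattices spanned by arbitrary vectors of $F$ instead have index up to about $G^m$, and your accounting (``one exponential from $G$, a second from the lcm'') overlooks this. A correct proof must keep $A$ in play. It should also allow arbitrary sub-solutions rather than only regroupings of one fixed Graver decomposition.
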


Finally, we need the Colorful Steinitz Lemma from~\cite{DBLP:journals/mp/OertelPW24}, which is formulated in \cref{lemma:colorful-steinitz-lemma} with the improved bound from~\cite{Barany+2024+261+267}.

\begin{lemma}[Colorful Steinitz Lemma~\cite{Barany+2024+261+267,DBLP:journals/mp/OertelPW24}]
    \label{lemma:colorful-steinitz-lemma}
    Let $\|\cdot\|$ be a norm in $\R^d$. Let $(x_j^i)_{j\in[n]}^{i\in[m]}$ be a matrix of vectors with norm at most $L$ so that $\sum_{i\in[m]}\sum_{j\in[n]}x_j^i=\veczero$. Then there exist permutations $\pi_1,\dots,\pi_n\colon[m]\to[m]$ so that
    \[
        \Bigl\|\sum_{i\in[k]}\sum_{j\in[n]}x_j^{\pi_j(i)}\Bigr\|\le(4d-2)L
    \]
    for all $k\in[m]$.
\end{lemma}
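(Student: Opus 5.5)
The plan is to adapt the Grinberg--Sevastyanov linear-algebraic proof of \cref{lemma:non-homogeneous-steinitz-lemma} to the colored setting, building the permutations $\pi_1,\dots,\pi_n$ backwards. For every $k=m,m-1,\dots,0$ I would maintain index sets $T_1^k,\dots,T_n^k\subseteq[m]$ with $|T_j^k|=k$ and $T_j^{k-1}\subseteq T_j^k$. I would set $\pi_j(k)$ to be the unique element of $T_j^k\setminus T_j^{k-1}$. The intended invariant is that the $k$-th colored prefix sum $\sum_{j\in[n]}\sum_{i\in T_j^k}x_j^i$ equals a fractional reweighting of the same vectors with zero total sum, up to an error of norm at most $(4d-2)L$.

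Concretely, for each $k$ I would look at the polytope $P_k$ of weights $\lambda_j^i\in[0,1]$, $i\in T_j^k$, with
\[
    \sum_{j\in[n]}\sum_{i\in T_j^k}\lambda_j^ix_j^i=\veczero,\qquad\sum_{i\in T_j^k}\lambda_j^i=k-c_j\quad(j\in[n]),
\]
for small integer deficits $c_j\ge0$. The steps are as follows.
\begin{enumerate}
    \item Show nonemptiness inductively: starting from $\lambda\equiv1$ at $k=m$, a point of $P_k$ restricted to $T_j^{k-1}=T_j^k\setminus\{\pi_j(k)\}$ lies in $P_{k-1}$ after a suitable rescaling.
    \item Take a vertex of $P_k$. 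Since only $n+d$ equality constraints are present, at most $n+d$ coordinates are fractional.
    \item Because each per-color sum is an integer, every color carrying a fractional coordinate carries at least two. Hence at most $d$ colors carry more than two fractional entries, and at most $2d$ fractional entries lie outside a ``generic'' pattern.
    \item In every color, find a coordinate with $\lambda_j^i=0$ and delete it, i.e.\ choose it as $\pi_j(k)$.
    \item Bound the error $\sum_{j}\sum_{i\in T_j^k}(1-\lambda_j^i)x_j^i$ by charging it only to the $\O(d)$ fractional coordinates. This is where $4d-2$ should come from: roughly $2d-1$ fractional entries, each contributing at most $2L$ after comparing consecutive levels.
\end{enumerate}

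The main obstacle is making the error bound independent of the number $n$ of colors. If each color simply has deficit $c_j=1$ and one zero entry, the naive error $\sum_j x_j^{\pi_j(k)}$ already has norm up to $nL$. I would therefore not fix the deficits color by color. Instead I would impose only the aggregated constraint $\sum_j\sum_i\lambda_j^i=nk-\O(d)$, while keeping $\sum_i\lambda_j^i\ge k-1$ as inequalities per color. Then all but $\O(d)$ colors are forced to have every weight equal to $1$ on $T_j^k$, and only those $\O(d)$ exceptional colors contribute to the error.

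The price of this change is that step 4 fails for the colors whose weights are all equal to $1$: they have no zero entry to delete. For them I would choose $\pi_j(k)$ by a secondary argument. The idea is to pass to the next level's LP, which reintroduces the removed vectors with fractional weight, and to re-solve at a vertex. Then the set of colors that are actually fractional changes only by the $\O(d)$ newly tight constraints.

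I expect the delicate part to be checking that this re-solving keeps the nested structure $T_j^{k-1}\subseteq T_j^k$ while preserving feasibility. My fallback is to follow the known colorful proof of Oertel, Paat and Weismantel and the sharpened counting of B\'ar\'any.
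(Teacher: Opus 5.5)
The paper gives no proof of this lemma. It imports it as a black box from Oertel--Paat--Weismantel and B\'ar\'any, so your proposal has to carry the whole argument. It has a genuine gap, exactly where the colourful difficulty lies.

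\textbf{Why Steps 1--5 cannot give an $n$-independent bound.} The Grinberg--Sevastyanov mechanism you transplant needs \emph{uniform} deficits $c_j=c$, and at the same time it fails with them:
\begin{itemize}
\item Step 1 works only because one scalar rescaling keeps $\sum_{j,i}\lambda_j^ix_j^i=\veczero$ while turning every colour sum $k-c$ into $k-1-c$. No scalar rescaling does this when the $c_j$ are unequal.
\item Step 4 needs $c$ of order $d$, not $1$. The correct vertex count is $F\le C_f+d$ together with $F\ge 2C_f$, where $F$ is the number of fractional entries and $C_f$ the number of colours containing one. So a single colour may carry $d+1$ fractional entries. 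With deficit $1$, any colour containing a fractional entry has no zero entry at all.
\item With uniform deficit $c$, every integral colour contributes $c$ zero-weight vectors to the error $\sum_j\sum_{i\in T_j^k}(1-\lambda_j^i)x_j^i$. Nothing in the invariant forces these to cancel, so Step 5 only gives $ncL$.
\end{itemize}

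\textbf{Why aggregating the deficit does not repair this.} Aggregating so that all but $\O(d)$ colours have weight $1$ on $T_j^k$ fixes the error count, but it breaks Steps 1 and 4 together. In each of the roughly $n$ all-ones colours you must delete a weight-$1$ vector. The restricted weights then have balance $-\sum_j x_j^{\pi_j(k)}$ over those colours, plus fractional-colour terms. This can have norm of order $nL$, and no rescaling repairs a nonzero balance.

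A level-$(k-1)$ point with total deficit $\O(d)$ exists only if the deleted transversal was chosen with sum close to the current prefix sum. That means you must already be able to pick one vector per colour with controlled total while keeping future feasibility, which is the colourful statement itself. Re-solving with the deleted vectors reintroduced violates $T_j^{k-1}\subseteq T_j^k$. Without reintroducing them, feasibility is exactly what is unproven. So the key step is circular, and deferring to the literature does not close it.

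\textbf{What is missing.} You need a separate rule for choosing the deleted transversal. For example:
\begin{itemize}
\item Keep the uniform plan with $c=d$, so that each integral colour offers $d+1$ zeros.
\item Choose $\pi_j(k)$ among those zeros by a second vertex argument. Use the polytope of per-colour probability vectors $\rho_j$ supported on the zero sets, with $\sum_{j,i}\rho_j^ix_j^i$ prescribed. Its vertices again have at most $d$ fractional colours.
\item Each step then removes a $\tfrac1{d+1}$ fraction of the current prefix sum, up to an additive $\O(dL)$.
\end{itemize}
This already gives an $n$-independent bound of $\O(d^2)L$. The sharp constant $(4d-2)L$ needs the finer argument of the cited papers.
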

If the total sum of the entries of a matrix is initially not the zero vector, we may modify each entry by subtracting the average entry and obtain \cref{corollary:non-homogeneous-colorful-steinitz-lemma} by applying \cref{lemma:colorful-steinitz-lemma} on the modified matrix. Note that the resulting matrix has entries that are at most twice as large as the original.

\begin{corollary}
    \label{corollary:non-homogeneous-colorful-steinitz-lemma}
    Let $\|\cdot\|$ be a norm in $\R^d$. Let $(x_j^i)_{j\in[n]}^{i\in[m]}$ be a matrix of vectors with norm at most $L$ so that $\sum_{i\in[m]}\sum_{j\in[n]}x_j^i=x$. Then there exist permutations $\pi_1,\dots,\pi_n\colon[m]\to[m]$ so that
    \[
        \Bigl\|\sum_{i\in[k]}\sum_{j\in[n]}x_j^{\pi_j(i)}-\frac kmx\Bigr\|\le2(4d-2)L
    \]
    for all $k\in[m]$.
\end{corollary}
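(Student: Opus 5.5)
The plan is to reduce the corollary to \cref{lemma:colorful-steinitz-lemma} by recentering every entry of the matrix. Set $\bar x=\frac1{mn}x$, the average entry. Define $\tilde x_j^i=x_j^i-\bar x$ for $i\in[m]$ and $j\in[n]$. Then
\[
\sum_{i\in[m]}\sum_{j\in[n]}\tilde x_j^i=x-mn\cdot\tfrac1{mn}x=\veczero,
\]
so the modified matrix satisfies the hypothesis of \cref{lemma:colorful-steinitz-lemma}.

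The next step is to bound the norms of the new entries. By the triangle inequality, $\|x\|\le\sum_{i,j}\|x_j^i\|\le mnL$, hence $\|\bar x\|\le L$. Using the triangle inequality again gives $\|\tilde x_j^i\|\le\|x_j^i\|+\|\bar x\|\le2L$.

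Applying \cref{lemma:colorful-steinitz-lemma} with bound $2L$ then yields permutations $\pi_1,\dots,\pi_n\colon[m]\to[m]$ such that
\[
\Bigl\|\sum_{i\in[k]}\sum_{j\in[n]}\tilde x_j^{\pi_j(i)}\Bigr\|\le(4d-2)\cdot2L
\]
for all $k\in[m]$. It remains to translate this back to the original entries. The first $k$ rows of the permuted matrix contain exactly $kn$ entries, so
\[
\sum_{i\in[k]}\sum_{j\in[n]}\tilde x_j^{\pi_j(i)}=\sum_{i\in[k]}\sum_{j\in[n]}x_j^{\pi_j(i)}-kn\cdot\tfrac1{mn}x=\sum_{i\in[k]}\sum_{j\in[n]}x_j^{\pi_j(i)}-\tfrac kmx.
\]
Substituting this identity into the bound gives exactly the claimed inequality with constant $2(4d-2)L$.

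There is no real obstacle in this argument. The only points needing care are that every entry is shifted by the same vector, so the prefix sums shift by exactly $\frac kmx$, and that the bound $\|\bar x\|\le L$ follows from the triangle inequality, which holds for an arbitrary norm.
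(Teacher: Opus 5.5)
Your proposal is correct and is exactly the paper's argument: subtract the average entry $\frac1{mn}x$ from every entry, bound the recentered entries by $2L$, apply \cref{lemma:colorful-steinitz-lemma}, and note that the first $k$ rows shift by exactly $\frac kmx$. You also spell out the two points the paper leaves implicit, namely the triangle-inequality bound $\|\frac1{mn}x\|\le L$ and the count of $kn$ shifted entries in the first $k$ rows.
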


\begin{proofof}{proposition:n-fold-scaled-idp}
    Let $A\in\nfold(r,s,\Delta)$ and let $B_i$ and $D_i$ be as in (\ref{eq:n-fold-matrix}).
    
    Let $M'=2^{\O((\sqrt s\Delta)^s)}$ be the dilation factor from \cref{lemma:general-scaled-idp} needed to ensure that each $D_i$ has the IDP. We show \cref{claim:align-n-fold}, which is an analogue of \cref{claim:align-stacked} with $b$ multiplied by $M'$. By straightforwardly following the arguments in the proof of \cref{lemma:stacked-scaled-ridp}, i.e., applying \cref{lemma:general-scaled-idp} for $d=1$ and $\Delta=\overline k$, we can then conclude that $\nfold(r,s,\Delta)$ has the IDP after an $M$ dilation for $M=2^{\O(\overline k)}\cdot M'=2^{(2^{(s\Delta)^{\O(s)}}\cdot r)^r}$.

    \begin{claimin}
        \label{claim:align-n-fold}
        There exists an integer $\overline k=(2^{(s\Delta)^{\O(s)}}\cdot r)^r$ so that the following holds: if $x\in\Z_{\ge0}^n$ is such that $Ax=kM'b$ and $k>\overline k$, then there exists an $\hat x\in\Z_{\ge0}^n$ and a positive integer $k'<k$ satisfying $\hat x\le x$ and $A\hat x=k'M'b$.
    \end{claimin}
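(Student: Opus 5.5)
We mirror the proof of \cref{claim:align-stacked}: cut $x$ into many pieces of bounded size, order them so that prefix sums track the line from $\veczero$ to $kM'b$, and find a pigeonhole collision. Write $x=[x_1;\dots;x_n]$ by bricks and $b=[b_0;b_1;\dots;b_n]$, so that $D_ix_i=kM'b_i$ and $\sum_iB_ix_i=kM'b_0$. The difficulty compared to \cref{claim:align-stacked} is that the local right-hand sides $kM'b_i$ are not zero, so $x_i$ cannot simply be split into kernel elements of $D_i$.

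\textbf{Step 1: splitting each brick.} For each brick $i$, we use the $M'$-dilation IDP of $D_i$ (\cref{lemma:general-scaled-idp} via \cref{observation:idp-closed-under-inversion}, applied in the nonnegative setting) to write $x_i=x_i^1+\dots+x_i^k$ with $x_i^j\in\Z^{t_i}_{\ge0}$ and $D_ix_i^j=M'b_i$. However, the vectors $B_ix_i^j$ may still be unbounded. We therefore refine further. We apply \cref{lemma:faithful-decomposition} to $D_i$ with right-hand side $M'b_i$, obtaining a uniform decomposition $M'b_i=b_i^1+\dots+b_i^{\ell_i}$ into pieces of norm at most $\Xi=2^{(s\Delta)^{\O(s)}}$. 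Every $x_i^j$ then splits accordingly into $y_i^{j,1}+\dots+y_i^{j,\ell_i}$. By \cref{lemma:solution-decomposition}, each $y_i^{j,q}$ further splits into a base solution of $\ell_1$-norm at most $\eta=\O(s(\Delta+\Xi))^s$ plus nonnegative Graver elements of $D_i$, each of $\ell_1$-norm $\O(s\Delta)^s$.

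\textbf{Step 2: colorful Steinitz.} The Graver elements can be moved freely between the copies $j\in[k]$, since they lie in $\ker D_i$. For brick $i$, we form a "color" consisting of a column of $k$ vectors: the $j$-th entry is the bounded part $\sum_q(\text{base of }y_i^{j,q})$. This part is bounded because $\ell_i$ can be taken uniform in $j$; if $\ell_i$ is not bounded, we treat each $q$ as its own color. The Graver elements are distributed as additional colors, padded with zeros so that each color has exactly $k$ entries. Mapping each entry through $B_i$ gives vectors in $\Z^r$ of norm $L\le\Delta\cdot\max(\eta,\O(s\Delta)^s)=2^{(s\Delta)^{\O(s)}}$. \cref{corollary:non-homogeneous-colorful-steinitz-lemma} then supplies permutations per color such that, for every prefix length $p\in[k]$, the sum of the first $p$ rows stays within $\O(rL)$ of $\tfrac pk\cdot kM'b_0=pM'b_0$. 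The key point is that any prefix of $p$ rows takes exactly $p$ entries from each color, i.e.\ exactly $p$ local solutions per brick. Hence its $D_i$-image is exactly $pM'b_i$, while the global part deviates from $pM'b_0$ by a bounded integer vector.

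\textbf{Step 3: pigeonhole.} The deviations lie in a ball containing at most $(\O(rL))^r=(2^{(s\Delta)^{\O(s)}}r)^r=:\overline k$ integer points. If $k>\overline k$, two prefixes $p_1<p_2$ have the same deviation, and the rows $p_1+1,\dots,p_2$ sum to $\hat x\le x$ with $A\hat x=(p_2-p_1)M'b$. This proves \cref{claim:align-n-fold}. The remainder of \cref{proposition:n-fold-scaled-idp} then follows exactly as in \cref{lemma:stacked-scaled-ridp}.

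\textbf{Main obstacle.} The hard part is Step 1/2: arranging the per-brick decomposition so that each color contributes exactly one bounded local solution per row. Two things must be controlled. First, the $\ell_i$ pieces must be grouped so that the entries stay bounded; this is where the uniformity of \cref{lemma:faithful-decomposition} (the decomposition of $b$ independent of $x$) is essential, allowing each $q$ to become its own color with exactly $k$ entries. Second, the Graver elements must be placed as zero-$D$-image colors. I would attempt this by making each Graver element a singleton color padded with $k-1$ zero vectors, which keeps every color at exactly $k$ entries, at the cost of the number of colors, which is harmless for the colorful Steinitz lemma.
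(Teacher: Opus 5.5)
Your proposal is correct and follows essentially the paper's route. That route is: the IDP of each $D_i$, the uniform decomposition of $M'b_i$ from \cref{lemma:faithful-decomposition}, the base-plus-Graver splitting from \cref{lemma:solution-decomposition}, \cref{corollary:non-homogeneous-colorful-steinitz-lemma} with one color per pair $(i,q)$, and a pigeonhole argument on the integral deviations from $pM'b_0$. The differences are cosmetic: the paper pads every $y_i^{jl}$ to a common number $f$ of Graver elements and uses colors indexed by $(i,l,e)$, whereas your zero-padded singleton colors work equally well because the colorful Steinitz bound is independent of the number of colors. Note, though, that making each $q$ its own color is mandatory rather than a fallback, since $\ell_i$ grows with $\|M'b_i\|_\infty$ and your first "sum over $q$" color would not be bounded.
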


    \begin{claimproof}
        Let $\Xi=2^{(s\Delta)^{\O(s)}}$ and $\eta=\O(s(\Delta+\Xi))^s=2^{(s\Delta)^{\O(s)}}$ be the numbers in \cref{lemma:faithful-decomposition,lemma:solution-decomposition} when applied to the constraint matrices $D_1,\dots,D_n$. For notational convenience, we split the solution vector $x$ into $n$ bricks $x_1\in\Z_{\ge0}^{t_1},\dots,x_n\in\Z_{\ge0}^{t_n}$ so that $x=[x_1;\dots;x_n]$. Additionally, we split the right-hand side vector into $1+n$ bricks $b_0\in\Z^r$ and $b_1,\dots,b_n\in\Z^s$ so that $b=[b_0;b_1;\dots;b_n]$.
        
        To start, for each brick $i\in[n]$, we independently use the IDP on the system $D_ix_i=kM'b_i$ to decompose $x_i$ into $x_i=y_i^1+\dots+y_i^k$ with $D_iy_i^j=M'b_i$, $y_i^j\in\Z_{\ge0}^{t_i}$ for $j\in[k]$. Using \cref{lemma:faithful-decomposition} on the system $D_iz=M'b_i,\,z\in\Z_{\ge0}^{t_i}$, we decompose each $M'b_i$ into $d_i^1+\dots+d_i^{\ell_i}$ with $\|d_i^l\|_\infty\le\Xi$ for $l\in[\ell_i]$. For each $j\in[k]$, this yields a decomposition of $y_i^j$ into $y_i^j=y_i^{j1}+\dots+y_i^{j\ell_i}$ with $D_iy_i^{jl}=d_i^l$, $y_i^{jl}\in\Z_{\ge0}^{t_i}$ for $l\in[\ell_i]$. Finally, using \cref{lemma:solution-decomposition} for each $l\in[\ell_i]$, we decompose $y_i^{jl}$ into $y_i^{jl}=y_i^{jl0}+y_i^{jl1}+\dots+y_i^{jlf_{ijl}}$ with $D_iy_i^{jl0}=d_i^l$, $\|y_i^{jl0}\|_1\le\eta$ and $y_i^{jle}\in\graverbasis(D_i)\cap\Z_{\ge0}^{t_i}$ for $e\in[f_{ijl}]$. Note that we can also assume that $\|y_i^{jle}\|_1\le\O(s\Delta)^s\le\eta$~\cite{DBLP:conf/icalp/EisenbrandHK18}. We can artificially pad the decomposition of $y_i^{jl}$ with zero elements so that we may assume that $f_{ijl}=f$ for some number $f$ uniformly over $i\in[n]$, $j\in[k]$, $l\in[\ell_i]$.
    
        Now we have decomposed $x$ into parts with $\ell_1$-norm bounded by $\eta$, which we use to construct $\sum_{i\in[n]}\ell_i(1+f)$ sequences with $k$ elements each. In particular, we apply \cref{corollary:non-homogeneous-colorful-steinitz-lemma} to permute the sequences $(B_iy_i^{jle})_{j\in[k]}$ for $i\in[n]$, $l\in[\ell_i]$ and $e\in[f]$. Note that that each sequence element has an $\ell_\infty$-norm of at most $L=\Delta\eta$ and that the total sum of the sequences is exactly $B_1x_1+\dots+B_nx_n=kM'b_0$. We find permutations $\pi_{ile}\colon[k]\to[k]$ so that for any $\overline j\in[k]$ we have that
        \[
            \Biggl\|\biggl(\sum_{j\in[\overline j]}\underbrace{\Bigl(\sum_{\substack{i\in[n],\\l\in[\ell_i],\\e\in\{0,1,\dots,f\}}}B_ig_i^{\pi_{ile}(j)le}\Bigr)}_{=:T_j}\biggr)-\frac{\overline j}k\cdot kM'b_0\Biggr\|_\infty\le2(4r-2)\cdot\Delta\eta.
        \]
        Note that $\tfrac1k\cdot kM'b_0$ is an integer vector and that, therefore, the vector on the left-hand side lies within the radius ${(8r-4)\Delta\eta}$ discrete $\ell_\infty$-norm ball, which has $\overline k=(2(8r-4)\Delta\eta+1)^r=(2^{(s\Delta)^{\O(s)}}\cdot r)^r$ elements. Since $k>\overline k$, the pigeonhole principle implies that there must be two indices $j_1<j_2$ such that $(\sum_{j\in[j_1]}T_j)-j_1M'b_0=(\sum_{j\in[j_2]}T_j)-j_2M'b_0$. Let $J=\{j_1+1,j_1+2,\dots,j_2\}$, which satisfies $0<|J|<k$. We will now verify that $\hat x$ defined by
        \[
            \hat x_i=\sum_{\substack{j\in J,\\l\in[\ell_i],\\e\in\{0,1,\dots,f\}}}y_i^{\pi_{ile}(j)le}
        \]
        for $i\in[n]$ is a suitable decomposition step satisfying $A\hat x=|J|\cdot M'b$. Note that integrality, nonnegativity of $\hat x$, and $\hat x\le x$ are immediate. From the collision of the prefix sums, we derive that $\sum_{i\in[n]}B_i\hat x_i=\sum_{j\in J}T_j=|J|\cdot M'b_0$. For the local constraints, we verify that
        \begin{align*}
            D_i\hat x_i&=\sum_{j\in J}\biggl(\sum_{l\in[\ell_i]}\Bigl(D_iy_i^{\pi_{il0}(j)l0}+\sum_{e\in[f]}\underbrace{D_iy_i^{\pi_{ile}(j)le}}_{=\veczero}\Bigr)\biggr)\\
            &=\sum_{j\in J}\sum_{l\in[\ell_i]}d_i^l=\sum_{j\in J}M'b_i=|J|\cdot M'b_i
        \end{align*}
        for all $i\in[n]$ as required.\claimqedqedhere
    \end{claimproof}
    \let\qed\relax
\end{proofof}

Despite \cref{proposition:n-fold-scaled-idp} only establishing the convex extensibility along lines through \cref{proposition:idp-iff-convex-extensibility-along-line}, this does yield an algorithmic application in terms of solving $4$-block IPs where the rank of the matrix
\[
    Z=\begin{bmatrix}
        A_0\\
        C_1\\
        \vdots\\
        C_n
    \end{bmatrix}
\]
in (\ref{eq:4-block-matrix}) is $1$. In this setting we can straightforwardly craft a value function reformulation
\begin{equation}
    \label{eq:4-block-value-function-reformulation}
    \min\{f(x_0)+h(b-Zx_0)\ \vert\ l_0\le x_0\le u_0,\,x_0\in\Z^k\}
\end{equation}
to optimize over the variables $x_0\in\Z^k$ corresponding to the columns of $Z$. Here $h$ is the value function of an $n$-fold IP. In this case, the rank of $Z$ being $1$ implies that $b-Zx_0$ lies within a $1$-dimensional subspace, showing that the objective of (\ref{eq:4-block-value-function-reformulation}) is convex extensible through \cref{proposition:idp-iff-convex-extensibility-along-line} for a fixed phase of $x_0$ modulo the period $M$. When we combine this with the $n$-fold algorithm from~\cite{DBLP:conf/ipco/HunkenschroderKLV25} to evaluate $h$, we obtain a near-linear time algorithm to optimize over such $4$-block IPs.

Whereas the rank restriction on $Z$ is significant, these restricted $4$-block matrices appear in the applications considered by Chen, Chen, and Zhang~\cite{DBLP:journals/mp/ChenCZ24}. These authors consider \emph{almost combinatorial $4$-block IPs} with constraint matrices of the form (\ref{eq:4-block-matrix}) where all coefficients are bounded, $C_1=\dots=C_n$ has rank $1$, but $A_0$ can have arbitrary rank. They provide an FPT algorithm for such IPs with a running time that is at least quadratic in $n$. Chen, Chen, and Zhang~\cite{DBLP:journals/mp/ChenCZ24} show that some scheduling and delivery problems can be captured in these almost combinatorial $4$-block IPs, even while having $A_0$ being the zero matrix. Therefore, the previously described value function reformulation algorithm also applies to this setting.